\newtheorem{theorem}{Theorem}[section]
\newtheorem{lemma}[theorem]{Lemma}
\newtheorem{proposition}[theorem]{Proposition}
\newtheorem{corollary}[theorem]{Corollary}
\theoremstyle{definition}
\newtheorem{definition}[theorem]{Definition}
\newtheorem{convention}[theorem]{Convention}
\newtheorem{conjecture}[theorem]{Conjecture}
\newtheorem{example}[theorem]{Example}
\newtheorem{remark}[theorem]{Remark}
\numberwithin{equation}{theorem}
\newcommand{\Ver}{\mathcal{V}}
\newcommand{\Edg}{\mathcal{E}}
\newcommand{\BigWedge}{\mathord{\adjustbox{valign=B,totalheight=.6\baselineskip}{$\bigwedge$}}}
\newcommand{\brB}[1]{\Bigl[ #1 \Bigr]}
\newcommand{\INV}[1]{\bigl| #1 \bigr|}
\newcommand{\iE}[1]{\mathtt{#1}}
\newcommand{\iiE}[1]{\boldsymbol{#1}}
\newcommand{\iiiE}[1]{\mathfrak{#1}}
\newcommand{\ivE}[1]{\mathit{#1}}
\newcommand{\onee}[2]{
\draw[line width=0.5pt, draw=black] (#1,0)-- (#1,-0.38);
\node[font=\tiny,align=center] at (#1,-0.5) {$\iE{#2}$};
\draw[black, fill=black] (#1,0) circle [radius=1.5pt];
}
\newcommand{\oneeu}[2]{
\draw[line width=0.5pt, draw=black] (#1,0)-- (#1,0.38);
\node[font=\tiny,align=center] at (#1,0.5) {$\iE{#2}$};
\draw[black, fill=black] (#1,0) circle [radius=1.5pt];
}
\newcommand{\twoe}[2]{
\begin{scope}
\clip (#1-0.2,-0.2) rectangle (#1+0.2,0.6);
\draw[line width=0.5pt, draw=black]  (#1,0) .. controls (#1-0.6,0.5) and (#1+0.6,0.5) .. (#1,0) ;
\node[font=\tiny,align=center] at (#1,0.5) {$\iiE{#2}$};
\draw[black, fill=black] (#1,0) circle [radius=1.5pt];
\end{scope}
}
\newcommand{\twoep}[3]{
\begin{scope}
\clip (#1-0.2,#2-0.2) rectangle (#1+0.2,#2+0.6);
\draw[line width=0.5pt, draw=black]  (#1,#2) .. controls (#1-0.6,#2+0.5) and (#1+0.6,#2+0.5) .. (#1,#2) ;
\node[font=\tiny,align=center] at (#1,#2+0.5) {$\iiE{#3}$};
\draw[black, fill=black] (#1,#2) circle [radius=1.5pt];
\end{scope}
}
\newcommand{\twoepd}[3]{
\begin{scope}
\clip (#1-0.2,#2+0.2) rectangle (#1+0.2,#2-0.6);
\draw[line width=0.5pt, draw=black]  (#1,#2) .. controls (#1-0.6,#2-0.5) and (#1+0.6,#2-0.5) .. (#1,#2) ;
\node[font=\tiny,align=center] at (#1,#2-0.5) {$\iiE{#3}$};
\draw[black, fill=black] (#1,#2) circle [radius=1.5pt];
\end{scope}
}
\newcommand{\twoetl}[3]{
\begin{scope}
\clip (#1+0.2,#2-0.2) rectangle (#1-0.6,#2+0.2);
\draw[line width=0.5pt, draw=black]  (#1,#2) .. controls (#1-0.5,#2-0.6) and (#1-0.5,#2+0.6) .. (#1,#2) ;
\node[font=\tiny,align=center] at (#1-0.5,#2) {$\iiE{#3}$};
\draw[black, fill=black] (#1,#2) circle [radius=1.5pt];
\end{scope}
}
\newcommand{\twoetr}[3]{
\begin{scope}
\clip (#1-0.2,#2-0.2) rectangle (#1+0.6,#2+0.2);
\draw[line width=0.5pt, draw=black]  (#1,#2) .. controls (#1+0.5,#2-0.6) and (#1+0.5,#2+0.6) .. (#1,#2) ;
\node[font=\tiny,align=center] at (#1+0.5,#2) {$\iiE{#3}$};
\draw[black, fill=black] (#1,#2) circle [radius=1.5pt];
\end{scope}
}
\newcommand{\twoed}[2]{
\begin{scope}
\clip (#1-0.2,0.2) rectangle (#1+0.2,-0.6);
\draw[line width=0.5pt, draw=black]  (#1,0) .. controls (#1-0.6,-0.5) and (#1+0.6,-0.5) .. (#1,0) ;
\node[font=\tiny,align=center] at (#1,-0.5) {$\iiE{#2}$};
\draw[black, fill=black] (#1,0) circle [radius=1.5pt];
\end{scope}
}
\newcommand{\twoeAB}[3]{
\begin{scope}
\clip (#1-0.2,0.2) rectangle (#2+0.2,-0.6);
\draw[line width=0.5pt, draw=black]  (#1,0) .. controls (#1*0.5+#2*0.5-0.6,-0.5) and (#1*0.5+#2*0.5+0.6,-0.5) .. (#2,0) ;
\node[font=\tiny,align=center] at (#1*0.5+#2*0.5,-0.5) {$\iiE{#3}$};
\draw[black, fill=black] (#1,0) circle [radius=1.5pt];
\draw[black, fill=black] (#2,0) circle [radius=1.5pt];
\end{scope}
}
\newcommand{\twoeABfu}[4]{
\begin{scope}
\clip (#1-0.2,1) rectangle (#2+0.2,-0.6);
\draw[line width=0.5pt, draw=black]  (#1,0) .. controls (#1*0.5+#2*0.5-#4,#4) and (#1*0.5+#2*0.5+#4,#4) .. (#2,0) node[font=\tiny, midway, above=1pt] {$\iiE{#3}$};
\draw[black, fill=black] (#1,0) circle [radius=1.5pt];
\draw[black, fill=black] (#2,0) circle [radius=1.5pt];
\end{scope}
}
\newcommand{\twoeABfd}[4]{
\begin{scope}
\clip (#1-0.2,0.6) rectangle (#2+0.2,-0.6);
\draw[line width=0.5pt, draw=black]  (#1,0) .. controls (#1*0.5+#2*0.5-#4,-#4) and (#1*0.5+#2*0.5+#4,-#4) .. (#2,0) ;
\node[font=\tiny,align=center] at (#1*0.5+#2*0.5,#4*#4-#4*1.5) {$\iiE{#3}$};
\draw[black, fill=black] (#1,0) circle [radius=1.5pt];
\draw[black, fill=black] (#2,0) circle [radius=1.5pt];
\end{scope}
}
\newcommand{\twoeABS}[3]{
\draw[line width=0.5pt, draw=black]  (#1,0) -- (#2,0) ;
\node[font=\tiny,align=center] at (#1*0.5+#2*0.5,0.1) {$\iiE{#3}$};
\draw[black, fill=black] (#1,0) circle [radius=1.5pt];
\draw[black, fill=black] (#2,0) circle [radius=1.5pt];
}
\newcommand{\twoeARB}[6]{
\draw[line width=0.5pt, draw=black]  (#1,#2) -- (#3,#4) node[font=\tiny, midway, #6] {$\iiE{#5}$} ;
\draw[black, fill=black] (#1,#2) circle [radius=1.5pt];
\draw[black, fill=black] (#3,#4) circle [radius=1.5pt];
}
\newcommand{\thre}[2]{
\begin{scope}
\clip (#1-0.2,-0.2) rectangle (#1+0.2,0.6);
\draw[line width=0.5pt, draw=black] (#1,0)-- (#1,0.38);
\draw[line width=0.5pt, draw=black]  (#1,0) .. controls (#1-0.6,0.5) and (#1+0.6,0.5) .. (#1,0) ;
\node[font=\tiny,align=center] at (#1,0.5) {$\iiiE{#2}$};
\draw[black, fill=black] (#1,0) circle [radius=1.5pt];
\end{scope}
}
\newcommand{\thred}[2]{
\begin{scope}
\clip (#1-0.2,#2+0.2) rectangle (#1+0.2,#2-0.5);
\draw[line width=0.5pt, draw=black] (#1,#2)-- (#1,#2-0.38);
\draw[line width=0.5pt, draw=black]  (#1,#2) .. controls (#1-0.6,#2-0.5) and (#1+0.6,#2-0.5) .. (#1,#2) ;
\draw[black, fill=black] (#1,#2) circle [radius=1.5pt];
\end{scope}
}
\newcommand{\threu}[2]{
\begin{scope}
\clip (#1-0.2,#2-0.2) rectangle (#1+0.2,#2+0.5);
\draw[line width=0.5pt, draw=black] (#1,#2)-- (#1,#2+0.38);
\draw[line width=0.5pt, draw=black]  (#1,#2) .. controls (#1-0.6,#2+0.5) and (#1+0.6,#2+0.5) .. (#1,#2) ;
\draw[black, fill=black] (#1,#2) circle [radius=1.5pt];
\end{scope}
}
\newcommand{\thretr}[3]{
\begin{scope}
\clip (#1-0.2,#2-0.2) rectangle (#1+0.6,#2+0.2);
\draw[line width=0.5pt, draw=black]  (#1,#2) .. controls (#1+0.5,#2-0.6) and (#1+0.5,#2+0.6) .. (#1,#2) ;
\node[font=\tiny,align=center] at (#1+0.5,#2) {$\iiiE{#3}$};
\draw[black, fill=black] (#1,#2) circle [radius=1.5pt];
\draw[line width=0.5pt, draw=black] (#1,#2)-- (#1+0.38,#2);
\end{scope}
}
\newcommand{\thretl}[3]{
\begin{scope}
\clip (#1-0.6,#2-0.2) rectangle (#1+0.2,#2+0.2);
\draw[line width=0.5pt, draw=black]  (#1,#2) .. controls (#1-0.5,#2-0.6) and (#1-0.5,#2+0.6) .. (#1,#2) ;
\node[font=\tiny,align=center] at (#1-0.5,#2) {$\iiiE{#3}$};
\draw[black, fill=black] (#1,#2) circle [radius=1.5pt];
\draw[line width=0.5pt, draw=black] (#1,#2)-- (#1-0.38,#2);
\end{scope}
}
\newcommand{\threAB}[3]{
\begin{scope}
\clip (#1-0.2,0.2) rectangle (#2+0.2,-0.6);
\draw[line width=0.5pt, draw=black] (#2,0)-- (#2,-0.38);
\draw[line width=0.5pt, draw=black]  (#1,0) .. controls (#2-0.6,-0.5) and (#2+0.6,-0.5) .. (#2,0) ;
\node[font=\tiny,align=center] at (#2,-0.5) {$\iiiE{#3}$};
\draw[black, fill=black] (#1,0) circle [radius=1.5pt];
\draw[black, fill=black] (#2,0) circle [radius=1.5pt];
\end{scope}
}
\newcommand{\threABC}[7]{
\draw[line width=0.5pt, draw=black] (#1,#2)-- (#1*0.3333+#3*0.3333+#5*0.3333,#2*0.3333+#4*0.3333+#6*0.3333);
\draw[line width=0.5pt, draw=black] (#3,#4)-- (#1*0.3333+#3*0.3333+#5*0.3333,#2*0.3333+#4*0.3333+#6*0.3333);
\draw[line width=0.5pt, draw=black] (#5,#6)-- (#1*0.3333+#3*0.3333+#5*0.3333,#2*0.3333+#4*0.3333+#6*0.3333) node[font=\tiny, near end, right=2pt] {$\iiiE{#7}$};
\draw[black, fill=black] (#1,#2) circle [radius=1.5pt];
\draw[black, fill=black] (#3,#4) circle [radius=1.5pt];
\draw[black, fill=black] (#5,#6) circle [radius=1.5pt];
}
\newcommand{\threABCT}[7]{
\draw[line width=0.5pt, draw=black] (#1,#2)-- (#1*0.5+#3*0.5,#2*0.5+#4*0.5);
\draw[line width=0.5pt, draw=black] (#3,#4)-- (#1*0.5+#3*0.5,#2*0.5+#4*0.5);
\draw[line width=0.5pt, draw=black] (#5,#6)-- (#1*0.5+#3*0.5,#2*0.5+#4*0.5) node[font=\tiny, near end, right=2pt] {$\iiiE{#7}$};
\draw[black, fill=black] (#1,#2) circle [radius=1.5pt];
\draw[black, fill=black] (#3,#4) circle [radius=1.5pt];
\draw[black, fill=black] (#5,#6) circle [radius=1.5pt];
}
\newcommand{\threABARB}[7]{
\draw[line width=0.5pt, draw=black] (#1,#2)--(#3,#4)  node[font=\tiny, midway, above=3pt] {$\iiiE{#5}$};
\draw[line width=0.5pt, draw=black]  (#1*0.5+#3*0.5,#2*0.5+#4*0.5).. controls (#1*0.5+#3*0.5+#6,#2*0.5+#4*0.5+#7)..(#3,#4);
\draw[black, fill=black] (#1,#2) circle [radius=1.5pt];
\draw[black, fill=black] (#3,#4) circle [radius=1.5pt];
}
\newcommand{\threABS}[3]{
\begin{scope}
\clip (#1-0.2,0.5) rectangle (#2+0.2,-0.5);
\draw[line width=0.5pt, draw=black] (#1,0)-- (#2,0);
\draw[line width=0.5pt, draw=black]  (#1*0.5+#2*0.5,0) .. controls (#1*0.25+#2*0.75-0.1,-0.15) and (#1*0.25+#2*0.75+0.1,-0.15) .. (#2,0);
\node[font=\tiny,align=center] at (#1*0.5+#2*0.5,0.1) {$\iiiE{#3}$};
\draw[black, fill=black] (#1,0) circle [radius=1.5pt];
\draw[black, fill=black] (#2,0) circle [radius=1.5pt];
\end{scope}
}
\newcommand{\threBA}[3]{
\begin{scope}
\clip (#1-0.2,0.2) rectangle (#2+0.2,-0.6);
\draw[line width=0.5pt, draw=black] (#1,0)-- (#1,-0.38);
\draw[line width=0.5pt, draw=black]  (#1,0) .. controls (#1-0.6,-0.5) and (#1+0.6,-0.5) .. (#2,0) ;
\node[font=\tiny,align=center] at (#1,-0.5) {$\iiiE{#3}$};
\draw[black, fill=black] (#1,0) circle [radius=1.5pt];
\draw[black, fill=black] (#2,0) circle [radius=1.5pt];
\end{scope}
}
\newcommand{\foureu}[3]{
\begin{scope}
\clip (#1-0.2,#2-0.2) rectangle (#1+0.2,#2+0.6);
\draw[line width=0.5pt, draw=black]  (#1,#2) .. controls (#1-0.6,#2+0.5) and (#1+0.6,#2+0.5) .. (#1,#2) node[font=\tiny, midway, above=1pt] {$\ivE{#3}$};
\draw[line width=0.5pt, draw=black]  (#1,#2) .. controls (#1-0.3,#2+0.5) and (#1+0.3,#2+0.5) .. (#1,#2);
\draw[black, fill=black] (#1,#2) circle [radius=1.5pt];
\end{scope}
}
\newcommand{\Vaab}[1]{
\begin{scope}
\clip (#1-0.2,-0.5) rectangle (#1+0.2,0.5);
\twoe{#1}{}
\draw[line width=0.5pt, draw=black] (#1,0)-- (#1-0.2,-0.38);
\draw[line width=0.5pt, draw=black] (#1,0)-- (#1+0.2,-0.38);
\draw[black, fill=black] (#1,0) circle [radius=1.5pt];
\end{scope}
}
\newcommand{\gra}[1]{
\draw[line width=0.5pt, draw=black] (#1,0)-- (#1,0.38);
\draw[line width=0.5pt, draw=black] (#1,0)-- (#1+0.1,0.2);
\draw[line width=0.5pt, draw=black] (#1,0)-- (#1-0.1,0.2);
\node[circle, fill=white,draw,inner sep=1pt]  at (#1,0.38) {\small{$\Gamma$}};
\draw[black, fill=black] (#1,0) circle [radius=1.5pt];
}
\newcommand{\grb}[2]{
\draw[line width=0.5pt, draw=black] (#1,0)-- (#1,0.38);
\draw[line width=0.5pt, draw=black] (#1,0)-- (#1+0.1,0.2);
\draw[line width=0.5pt, draw=black] (#1,0)-- (#1-0.1,0.2);
\node[circle, fill=white,draw,inner sep=0.5pt]  at (#1,0.38) {\small{$#2$}};
\draw[black, fill=black] (#1,0) circle [radius=1.5pt];
}
\newcommand{\grc}[1]{
\draw[line width=0.5pt, draw=black] (#1,0)-- (#1+0.1,0.2);
\draw[line width=0.5pt, draw=black] (#1,0)-- (#1-0.1,0.2);
\node[circle, fill=white,draw,inner sep=1pt]  at (#1,0.38) {\small{$\Gamma$}};
\draw[black, fill=black] (#1,0) circle [radius=1.5pt];
}
\newcommand{\vera}[3]{
\node[circle, fill=white,draw,inner sep=1pt]  at (#1,#2) {\tiny{$#3$}};
}
\newcommand{\gA}[4]{
\begin{tikzpicture}[baseline=(A),outer sep=0pt,inner sep=0pt]

\node (A) at (0,0) {};

\node[font=\tiny,align=center] at (0,0.5) {$\iE{#1}$};

\node[font=\tiny,align=left] at (0.5,0) {$\iE{#2}$};
\node[font=\tiny,align=right] at (-0.5,0) {$\iE{#4}$};

\draw[line width=0.5pt, draw=black] (0.38,0)-- (-0.38,0);

\node[font=\tiny,align=center] at (0,-0.5) {$\iE{#3}$};
\draw[line width=0.5pt, draw=black] (0,0.38)-- (0,-0.38);

\draw[black, fill=black] (0,0) circle [radius=1.5pt];

\end{tikzpicture}
}
\newcommand{\gB}[4]{
\begin{tikzpicture}[baseline=(A),outer sep=0pt,inner sep=0pt]

\node (A) at (0,0) {};

\node[font=\tiny,align=center] at (0,-0.5) {$\iiiE{#4}$};

\draw[line width=0.5pt, draw=black] (0,0)-- (0,-0.38);

\draw[black, fill=black] (0,0) circle [radius=1.5pt];

\draw[black, fill=black] (0.7,0) circle [radius=1.5pt];

\draw[black, fill=black] (-0.7,0) circle [radius=1.5pt];

\thre{-0.7}{#1}

\thre{0}{#2}

\thre{0.7}{#3}

\draw[line width=0.5pt, draw=black]  (0.7,0) .. controls (0.6,-0.5) and (-0.6,-0.5) .. (-0.7,0) ;

\end{tikzpicture}
}
\newcommand{\gCa}[1]{
\begin{tikzpicture}[baseline=(A),outer sep=0pt,inner sep=0pt]
\node (A) at (0,0) {};
\twoe{0}{#1}
\twoed{0}{#1}
\draw[black, fill=black] (0,0) circle [radius=1.5pt];
\end{tikzpicture}
}
\newcommand{\gCb}[2]{
\begin{tikzpicture}[baseline=(A),outer sep=0pt,inner sep=0pt]
\node (A) at (0,0) {};
\twoe{0}{#1}
\twoed{0}{#2}
\draw[black, fill=black] (0,0) circle [radius=1.5pt];
\end{tikzpicture}
}
\newcommand{\gD}[6]{
\begin{tikzpicture}[baseline=(A),outer sep=0pt,inner sep=0pt]

\node (A) at (0,0) {};

\node[font=\tiny,align=center] at (0,-0.5) {$\iiE{#6}$};

\node[font=\tiny,align=center] at (-0.35,0.1) {$\iiE{#2}$};
\node[font=\tiny,align=center] at (0.35,0.1) {$\iiE{#4}$};

\draw[line width=0.5pt, draw=black] (-0.7,0)-- (0.7,0);

\draw[black, fill=black] (0,0) circle [radius=1.5pt];

\draw[black, fill=black] (0.7,0) circle [radius=1.5pt];

\draw[black, fill=black] (-0.7,0) circle [radius=1.5pt];

\twoe{-0.7}{#1}

\twoe{0}{#3}

\twoe{0.7}{#5}

\draw[line width=0.5pt, draw=black]  (0.7,0) .. controls (0.6,-0.5) and (-0.6,-0.5) .. (-0.7,0) ;
\end{tikzpicture}
}
\newcommand{\gDA}[6]{
\begin{tikzpicture}[baseline=(A),outer sep=0pt,inner sep=0pt]

\node (A) at (0,0) {};

\node[font=\tiny,align=center] at (0,-0.5) {$\iiE{#6}$};

\node[font=\tiny,align=center] at (-0.35,0.1) {$\iiE{#2}$};
\node[font=\tiny,align=center] at (0.35,0.1) {$\iiE{#4}$};

\draw[line width=0.5pt, draw=black] (-0.7,0)-- (0,0);
\draw[line width=0.5pt, draw=black, dotted] (0.7,0)-- (0,0);

\draw[black, fill=black] (0,0) circle [radius=1.5pt];

\draw[black, fill=black] (0.7,0) circle [radius=1.5pt];

\draw[black, fill=black] (-0.7,0) circle [radius=1.5pt];

\twoe{-0.7}{#1}

\twoe{0}{#3}

\twoe{0.7}{#5}

\draw[line width=0.5pt, draw=black]  (0.7,0) .. controls (0.6,-0.5) and (-0.6,-0.5) .. (-0.7,0) ;
\end{tikzpicture}
}
\newcommand{\gDf}[8]{
\begin{tikzpicture}[baseline=(A),outer sep=0pt,inner sep=0pt]

\node (A) at (0,0) {};

\node[font=\tiny,align=center] at (0.35,-0.5) {$\iiE{#8}$};

\node[font=\tiny,align=center] at (-0.35,0.1) {$\iiE{#2}$};
\node[font=\tiny,align=center] at (0.35,0.1) {$\iiE{#4}$};

\node[font=\tiny,align=center] at (1.05,0.1) {$\iiE{#6}$};

\draw[line width=0.5pt, draw=black] (1.4,0)-- (-0.7,0);

\draw[black, fill=black] (0,0) circle [radius=1.5pt];

\draw[black, fill=black] (1.4,0) circle [radius=1.5pt];

\draw[black, fill=black] (0.7,0) circle [radius=1.5pt];

\draw[black, fill=black] (-0.7,0) circle [radius=1.5pt];

\twoe{-0.7}{#1}

\twoe{0}{#3}

\twoe{0.7}{#5}
\twoe{1.4}{#7}

\draw[line width=0.5pt, draw=black]  (1.4,0) .. controls (0.95,-0.5) and (-0.25,-0.5) .. (-0.7,0) ;
\end{tikzpicture}
}
\newcommand{\gE}[2]{
\begin{tikzpicture}[baseline=(A),outer sep=0pt,inner sep=0pt]
\node (A) at (0,0) {};
\thre{0}{#1}
\node[font=\tiny,align=center] at (0,-0.5) {$\iE{#2}$};
\draw[line width=0.5pt, draw=black] (0,0)-- (0,-0.38);
\draw[black, fill=black] (0,0) circle [radius=1.5pt];
\end{tikzpicture}
}
\newcommand{\gF}[3]{
\begin{tikzpicture}[baseline=(A),outer sep=0pt,inner sep=0pt]
\node (A) at (0,0) {};
\twoe{0}{#1}
\node[font=\tiny,align=center] at (-0.2,-0.5) {$\iE{#2}$};
\node[font=\tiny,align=center] at (0.2,-0.5) {$\iE{#3}$};
\draw[line width=0.5pt, draw=black] (0,0)-- (-0.2,-0.38);
\draw[line width=0.5pt, draw=black] (0,0)-- (0.2,-0.38);
\draw[black, fill=black] (0,0) circle [radius=1.5pt];
\end{tikzpicture}
}
\newcommand{\gGa}[4]{
\begin{tikzpicture}[baseline=(A),outer sep=0pt,inner sep=0pt]
\node (A) at (0,0) {};
\twoe{0}{#1}
\node[font=\tiny,align=center] at (0.35,0.1) {$\iiE{#2}$};
\twoe{0.7}{#3}
\node[font=\tiny,align=center] at (0,-0.5) {$\iE{#4}$};
\node[font=\tiny,align=center] at (0.7,-0.5) {$\iE{#4}$};
\draw[line width=0.5pt, draw=black] (0,0)-- (0.0,-0.38);
\draw[line width=0.5pt, draw=black] (0.7,0)-- (0.7,-0.38);
\draw[line width=0.5pt, draw=black] (0.7,0)-- (0,0);
\draw[black, fill=black] (0,0) circle [radius=1.5pt];
\draw[black, fill=black] (0.7,0) circle [radius=1.5pt];
\end{tikzpicture}
}
\newcommand{\gGb}[5]{
\begin{tikzpicture}[baseline=(A),outer sep=0pt,inner sep=0pt]
\node (A) at (0,0) {};
\twoe{0}{#1}
\node[font=\tiny,align=center] at (0.35,0.1) {$\iiE{#2}$};
\twoe{0.7}{#3}
\node[font=\tiny,align=center] at (0,-0.5) {$\iE{#4}$};
\node[font=\tiny,align=center] at (0.7,-0.5) {$\iE{#5}$};
\draw[line width=0.5pt, draw=black] (0,0)-- (0.0,-0.38);
\draw[line width=0.5pt, draw=black] (0.7,0)-- (0.7,-0.38);
\draw[line width=0.5pt, draw=black] (0.7,0)-- (0,0);
\draw[black, fill=black] (0,0) circle [radius=1.5pt];
\draw[black, fill=black] (0.7,0) circle [radius=1.5pt];
\end{tikzpicture}
}
\newcommand{\gGc}[7]{
\begin{tikzpicture}[baseline=(A),outer sep=0pt,inner sep=0pt]
\node (A) at (0,0) {};
\twoe{0}{#1}
\node[font=\tiny,align=center] at (0.35,0.1) {$\iiE{#2}$};
\twoe{0.7}{#3}
\node[font=\tiny,align=center] at (1.05,0.1) {$\iiE{#4}$};
\twoe{1.4}{#5}
\node[font=\tiny,align=center] at (0,-0.5) {$\iE{#6}$};
\node[font=\tiny,align=center] at (1.4,-0.5) {$\iE{#7}$};
\draw[line width=0.5pt, draw=black] (0,0)-- (0.0,-0.38);
\draw[line width=0.5pt, draw=black] (1.4,0)-- (1.4,-0.38);
\draw[line width=0.5pt, draw=black] (1.4,0)-- (0,0);
\draw[black, fill=black] (0,0) circle [radius=1.5pt];
\draw[black, fill=black] (0.7,0) circle [radius=1.5pt];
\draw[black, fill=black] (1.4,0) circle [radius=1.5pt];
\end{tikzpicture}
}
\newcommand{\gGA}[7]{
\begin{tikzpicture}[baseline=(A),outer sep=0pt,inner sep=0pt]
\node (A) at (0,0) {};
\twoe{0}{#1}
\node[font=\tiny,align=center] at (0.35,0.1) {$\iiE{#2}$};
\twoe{0.7}{#3}
\node[font=\tiny,align=center] at (1.05,0.1) {$\iiE{#4}$};
\twoe{1.4}{#5}
\node[font=\tiny,align=center] at (0,-0.5) {$\iE{#6}$};
\node[font=\tiny,align=center] at (1.4,-0.5) {$\iE{#7}$};
\draw[line width=0.5pt, draw=black] (0,0)-- (0.0,-0.38);
\draw[line width=0.5pt, draw=black] (1.4,0)-- (1.4,-0.38);
\draw[line width=0.5pt, draw=black] (0.7,0)-- (0,0);
\draw[line width=0.5pt, draw=black, dotted] (0.7,0)-- (1.4,0);
\draw[black, fill=black] (0,0) circle [radius=1.5pt];
\draw[black, fill=black] (0.7,0) circle [radius=1.5pt];
\draw[black, fill=black] (1.4,0) circle [radius=1.5pt];
\end{tikzpicture}
}
\newcommand{\gHa}[3]{
\begin{tikzpicture}[baseline=(A),outer sep=0pt,inner sep=0pt]
\node (A) at (0,0) {};
\thre{0}{#2}
\node[font=\tiny,align=center] at (0.35,0.1) {$\iiE{#1}$};
\thre{0.7}{#3}
\draw[line width=0.5pt, draw=black] (0.7,0)-- (0,0);
\draw[black, fill=black] (0,0) circle [radius=1.5pt];
\draw[black, fill=black] (0.7,0) circle [radius=1.5pt];
\end{tikzpicture}
}
\newcommand{\gHb}[4]{
\begin{tikzpicture}[baseline=(A),outer sep=0pt,inner sep=0pt]
\node (A) at (0,0) {};
\thre{0}{#4}
\node[font=\tiny,align=center] at (0.35,0.1) {$\iiE{#1}$};
\twoe{0.7}{#2}
\node[font=\tiny,align=center] at (1.05,0.1) {$\iiE{#3}$};
\thre{1.4}{#4}
\draw[line width=0.5pt, draw=black] (1.4,0)-- (0,0);
\draw[black, fill=black] (0,0) circle [radius=1.5pt];
\draw[black, fill=black] (0.7,0) circle [radius=1.5pt];
\draw[black, fill=black] (1.4,0) circle [radius=1.5pt];
\end{tikzpicture}
}
\newcommand{\gHc}[5]{
\begin{tikzpicture}[baseline=(A),outer sep=0pt,inner sep=0pt]
\node (A) at (0,0) {};
\thre{0}{#4}
\node[font=\tiny,align=center] at (0.35,0.1) {$\iiE{#1}$};
\twoe{0.7}{#2}
\node[font=\tiny,align=center] at (1.05,0.1) {$\iiE{#3}$};
\thre{1.4}{#5}
\draw[line width=0.5pt, draw=black] (1.4,0)-- (0,0);
\draw[black, fill=black] (0,0) circle [radius=1.5pt];
\draw[black, fill=black] (0.7,0) circle [radius=1.5pt];
\draw[black, fill=black] (1.4,0) circle [radius=1.5pt];
\end{tikzpicture}
}
\newcommand{\gHA}[5]{
\begin{tikzpicture}[baseline=(A),outer sep=0pt,inner sep=0pt]
\node (A) at (0,0) {};
\thre{0}{#4}
\node[font=\tiny,align=center] at (0.35,0.1) {$\iiE{#1}$};
\twoe{0.7}{#2}
\node[font=\tiny,align=center] at (1.05,0.1) {$\iiE{#3}$};
\thre{1.4}{#5}
\draw[line width=0.5pt, draw=black] (0.7,0)-- (0,0);
\draw[line width=0.5pt, draw=black, dotted] (1.4,0)-- (0.7,0);
\draw[black, fill=black] (0,0) circle [radius=1.5pt];
\draw[black, fill=black] (0.7,0) circle [radius=1.5pt];
\draw[black, fill=black] (1.4,0) circle [radius=1.5pt];
\end{tikzpicture}
}
\newcommand{\gHd}[7]{
\begin{tikzpicture}[baseline=(A),outer sep=0pt,inner sep=0pt]
\node (A) at (0,0) {};
\thre{0}{#6}
\node[font=\tiny,align=center] at (0.35,0.1) {$\iiE{#1}$};
\twoe{0.7}{#2}
\node[font=\tiny,align=center] at (1.05,0.1) {$\iiE{#3}$};
\twoe{1.4}{#4}
\node[font=\tiny,align=center] at (1.75,0.1) {$\iiE{#5}$};
\thre{2.1}{#7}
\draw[line width=0.5pt, draw=black] (2.1,0)-- (0,0);
\draw[black, fill=black] (0,0) circle [radius=1.5pt];
\draw[black, fill=black] (0.7,0) circle [radius=1.5pt];
\draw[black, fill=black] (1.4,0) circle [radius=1.5pt];
\draw[black, fill=black] (2.1,0) circle [radius=1.5pt];
\end{tikzpicture}
}
\newcommand{\gIa}[4]{
\begin{tikzpicture}[baseline=(A),outer sep=0pt,inner sep=0pt]
\node (A) at (0,0) {};
\twoe{0}{#1}
\node[font=\tiny,align=center] at (0.35,0.1) {$\iiE{#2}$};
\thre{0.7}{#3}
\node[font=\tiny,align=center] at (0,-0.5) {$\iE{#4}$};
\draw[line width=0.5pt, draw=black] (0,0)-- (0.0,-0.38);
\draw[line width=0.5pt, draw=black] (0.7,0)-- (0,0);
\draw[black, fill=black] (0,0) circle [radius=1.5pt];
\draw[black, fill=black] (0.7,0) circle [radius=1.5pt];
\end{tikzpicture}
}
\newcommand{\gIb}[6]{
\begin{tikzpicture}[baseline=(A),outer sep=0pt,inner sep=0pt]
\node (A) at (0,0) {};
\twoe{0}{#1}
\node[font=\tiny,align=center] at (0.35,0.1) {$\iiE{#2}$};
\twoe{0.7}{#3}
\node[font=\tiny,align=center] at (1.05,0.1) {$\iiE{#4}$};
\thre{1.4}{#5}
\node[font=\tiny,align=center] at (0,-0.5) {$\iE{#6}$};
\draw[line width=0.5pt, draw=black] (0,0)-- (0.0,-0.38);
\draw[line width=0.5pt, draw=black] (1.4,0)-- (0,0);
\draw[black, fill=black] (0,0) circle [radius=1.5pt];
\draw[black, fill=black] (0.7,0) circle [radius=1.5pt];
\draw[black, fill=black] (1.4,0) circle [radius=1.5pt];
\end{tikzpicture}
}
\newcommand{\gIA}[6]{
\begin{tikzpicture}[baseline=(A),outer sep=0pt,inner sep=0pt]
\node (A) at (0,0) {};
\twoe{0}{#1}
\node[font=\tiny,align=center] at (0.35,0.1) {$\iiE{#2}$};
\twoe{0.7}{#3}
\node[font=\tiny,align=center] at (1.05,0.1) {$\iiE{#4}$};
\thre{1.4}{#5}
\node[font=\tiny,align=center] at (0,-0.5) {$\iE{#6}$};
\draw[line width=0.5pt, draw=black] (0,0)-- (0.0,-0.38);
\draw[line width=0.5pt, draw=black] (0.7,0)-- (0,0);
\draw[line width=0.5pt, draw=black, dotted] (1.4,0)-- (0.7,0);
\draw[black, fill=black] (0,0) circle [radius=1.5pt];
\draw[black, fill=black] (0.7,0) circle [radius=1.5pt];
\draw[black, fill=black] (1.4,0) circle [radius=1.5pt];
\end{tikzpicture}
}
\newcommand{\gEq}{
\begin{tikzpicture}[baseline=(A),outer sep=0pt,inner sep=0pt]
\node (A) at (0,0) {$\simeq$};
\end{tikzpicture}
}
\newcommand{\gSu}[2]{
\begin{tikzpicture}[baseline=(A),outer sep=0pt,inner sep=-1pt]
\node (A) at (0,0) {$\sum\limits_{#1}^{#2}$};
\end{tikzpicture}
}
\newcommand{\gS}[1]{
\begin{tikzpicture}[baseline=(A),outer sep=0pt,inner sep=-3pt]
\node (A) at (0,0) {$#1$};
\end{tikzpicture}
}
\newcommand{\gES}[1]{
\begin{tikzpicture}[baseline=(A),outer sep=0pt,inner sep=-3pt]
\node (A) at (0,0) {};
\node  at (0,-0.5) {$#1$};
\end{tikzpicture}
}
\def\sgn{{\rm sgn}}
\def\SL{{\rm SL}}
\def\CC{{\mathbb C}}
\def\NN{{\mathbb N}}
\def\MMM{{\mathcal M}}
\def\eff{{\rm eff}}
\title[Invariants of fundamental representations of $\SL_n$ and hypergraphs]{Invariant rings of sums of fundamental representations of $\SL_n$ and colored hypergraphs}
\author[Lukas Braun]{Lukas Braun}
\address{Mathematisches Institut, Universit\"at T\"ubingen,
Auf der Morgenstelle 10, 72076 T\"ubingen, Germany}
\email{braun@math.uni-tuebingen.de}
\subjclass[2010]{13A50, 15A72, 20G20, 05C15}
\keywords{Special linear group, fundamental representations, invariant rings, colored hypergraphs}
\begin{document}

\begin{abstract}
The fundamental representations of the special linear group $\SL_n$ over the complex numbers are the exterior powers of $\CC^n$. We consider the invariant rings of sums of arbitrary many copies of these $\SL_n$-modules. The symbolic method for antisymmetric tensors developed by Grosshans, Rota and Stein is used, but instead of brackets, we associate colored hypergraphs to the invariants. This approach allows us to use results and insights from graph theory. In particular, we determine  (minimal) generating sets of the invariant rings in the case of $\SL_4$ and $\SL_5$, as well as syzygies for $\SL_4$. Since the invariants constitute incidence geometry of linear subspaces of the projective space $\mathbb{P}_{n-1}$, the generating invariants provide (minimal) sets of geometric relations that are able to describe all others.
\end{abstract}

\maketitle

\section{Introduction}

Classical invariant theory deals with invariants of linear reductive groups and their syzygies. From its beginnings in the nineteenth century, it was not only a forerunner for modern invariant theory but, for example through \emph{Hilberts Nullstellensatz} and \emph{Basissatz}, for modern algebraic geometry and algebra in general. The literature on the subject is vast, we refer to~\cite{GW, KP, PO, Weyl} for an overview.

One of the most important fields of research in the nineteenth century was the theory of invariants of \emph{binary forms}, so to say of $\SL_2(\CC)$-invariants of  symmetric tensors. This is still an active area of research, see~\cite{BP1, BP2, MD, LO}. 
Besides  the case of binary forms, among others, invariants and syzygies have been found for \emph{systems of vectors and covectors} of the classical groups, see~\cite[ \S 9.3, 9.4]{PV}. At least the theory of binary forms relies heavily on the \emph{symbolic method}, using \emph{brackets} to denote complete contractions of tensors made up of the relevant symmetric ones and the covariant tensor $\det$. 

Weitzenb\"ock applied the symbolic method to \emph{antisymmetric} tensors in~\cite{Wei1, Wei2}, but it was not until 1987, when Grosshans, Rota, and Stein in~\cite{GRS}, see also~\cite{RS}, formulated a rigorous symbolic method for both symmetric and antisymmetric tensors using \emph{superalgebras}. Nevertheless, besides findings on invariants of $(\BigWedge^2(\CC^n))^1$ and $(\BigWedge^2(\CC^4))^{n_2}$, see~\cite[\S 5.4]{GRS},~\cite[Th. 34.9]{Gur}  and~\cite{crapo, mcmillan, RStu, vazzdiss, vazz, White} as well as invariants of up to four linear subspaces of projective space~\cite{HH, Hu1}, and some statements on invariants of $\BigWedge^3(\CC^n)$ for small $n$, see~\cite[\S 35]{Gur} and~\cite{chan}, the only progress in finding generators for the ring of invariants has been made by Rosa Huang, a student of Rota, in the case of $(\BigWedge^2(\CC^4))^{n_2}$, see~\cite{Hu}. But the system of generators she found was by no means minimal. As vanishing of invariants describes the geometry of linear subspaces of projective space, see~\cite{Stu} and~\cite[\S 11]{dolga} for a discussion, determination of a \textit{minimal} generating set in this context means nothing less than finding a minimal set of geometric relations that are able to describe all the others.

Gurevich~\cite[\S 35]{Gur} as well as Sturmfels~\cite[p. 173]{Stu} and Procesi~\cite[\S 6.8]{Pro} consider this a complicated and involved problem. 

The aim of the present paper is first to develop a method for approaching this problem and second, to demonstrate the power of this method by finding generators of the ring of invariants for $\SL_4$ and $\SL_5$ as well as relations for $\SL_4$. 

We fix some notation. Let $n \geq 2$ be a fixed integer. We work over the field $\CC$ of complex numbers and denote by $\SL_n$ the special linear group of degree $n$ over $\CC$. This group acts on $V=\CC^n$ by multiplication from the left, which is the standard representation. This induces an action of $\SL_n$ on $\BigWedge^i V$ for any $i \in \{1,\ldots,n-1\}$, the fundamental representations. Now set $V_{i,j}:=\BigWedge^i V$ and
$$
W:=  \bigoplus_{i=1}^{n-1} \bigoplus_{j=1}^{n_i} V_{i,j}
$$
for fixed $n_1,\ldots,n_{n-1}\geq 0$. We call the induced action of $\SL_n$ on $W$ the \emph{action on sums of fundamental representations}. The special case $n_i=0$ for $i\neq 1,n-1$ is equivalent to the action on vectors and covectors.

We informally describe the symbolic method from~\cite{GRS} now, ignoring signs. Let $m$, $n_{i,j}$ be nonnegative integers. A bracket monomial is a product of $m$ brackets, where every bracket contains $n$ out of the following letters: to every $V_{i,j}$ associate letters $a_{i,j,k}$, so that for every $1 \leq k \leq n_{i,j}$, the letter  $a_{i,j,k}$ turns up $i$ times in the bracket monomial. Now consider the mapping of an element $\sum t_{i,j}$ of $W$ to 
$$
\left(\bigotimes\limits_{i,j}t_{i,j}^{\otimes n_{i,j}}\right) \otimes {\det}^{\otimes m},
$$
followed by the complete contraction, where two indices of the $k$-th appearance of $t_{i,j}$ and the $l$-th appearance of $\det$ in the tensor are contracted if and only if the letter $a_{i,j,k}$ turns up in the $l$-th bracket. This is the invariant associated to the bracket monomial. 

We have two fundamental statements: first, all \emph{invariants} come from bracket polynomials and second, all \emph{relations} between bracket polynomials come from the \emph{Pl\"ucker relation}
$$
\sum_{(u_1,u_2)\vdash(u)} \left[u_1\right] \left[ u_2 w \right] 
\ = \
0,
$$
where $u$ is a word of length $n+1$, $w$ a word of length $n-1$, and we sum over all partitions of $u$ in two subwords $u_1$ of length $n$ and $u_2$ of length one. In the case $n_i=0$ for $i\neq 1$, this gives the standard Pl\"ucker relations (without sign, which is due to the nature of the involved superalgebras, see Section~\ref{sec:brainv}). 

This sets the starting point for our method. We associate to each bracket monomial a \emph{colored hypergraph} defined as follows: for each bracket, we have a vertex, and for each letter $a_{i,j,k}$, we have an $i$-edge of \emph{color} $j$ and \emph{shading} $k$. We ignore the shading for a moment, as it just affects sign. Now if the letter $a_{i,j,k}$ turns up in a bracket, the respective $i$-edge has a connection to the vertex associated to the bracket. For a similar (somewhat dual) approach involving directed graphs in the case of binary forms see~\cite{Ograph} and also~\cite{olive, PO, PV}. This approach has just recently been applied to determine the ideal of relations of several points on the projective line, see~\cite{HMSV1, HMSV2, HMSV3, HMSV4, kempe}.

At first sight, the problem is much more involved in our case: excluding the already settled cases $n=2,3$, we deal with vertices of degree $n\geq 4$ and $i$-edges with $i\leq n-1$. But it turns out that the Pl\"ucker relations from above can be used to substantially simplify the involved hypergraphs, which in turn allows us to effectively use combinatorial and graph theoretical results. We develop suitable techniques in Sections~\ref{sec:bragra},~\ref{sec:SL4}, and~\ref{sec:SL5}. One of the greatest advantages of the approach is the self-containedness, making it comprehensible for everyone with basic knowledge on combinatorics.

In the case of $\SL_4$, we explicitly list a minimal set of generators of $\CC[W]^{\SL_4}$ in the following theorem. Here and throughout the paper, for the colors of $1$-edges we use typewriter font, for those of $2$-edges bold letters and for $3$-edges Fraktur letters. We denote the standard coordinate functions on $V_{1,j}$ by $x_{\iE{j}1},\ldots,x_{\iE{j}4}$, those on $V_{2,j}$ by $y_{\iiE{j}12},\ldots,y_{\iiE{j}34}$, and on $V_{3,j}$ by $z_{\iiiE{j}123},\ldots,z_{\iiiE{j}234}$. Moreover, to ease notation, colors $1,2,\ldots,N$ of $i$-edges stand representatively for arbitrary but ascending colors $1\leq k_1,k_2,\ldots,k_N\leq n_i$.

\begin{theorem}\label{th:fftSL4}
Let $\SL_4$ act on an arbitrary sum of fundamental representations $W$. Then a minimal generating set of $\CC[W]^{\SL_4}$ is the following, where if the respective invariant is too big, only the number of monomials is given. In these cases, consult the appendix~\cite{app} for the actual invariant.

\setlength{\tabcolsep}{-0.2mm}
\setlength{\arraycolsep}{0.1mm}

\begin{longtable}{ccc}
Graph & Invariant & Symbol
\\
\hline
\\[-9pt]
\gA{1}{2}{3}{4}
&
 $\left| \begin{array}{ccc}
x_{\iE{1}1} & \cdots & x_{\iE{4}1} \\
\vdots &\ddots& \vdots \\
x_{\iE{1}4} & \cdots & x_{\iE{4}4}
\end{array}\right|$
&
$\INV{{}_{\iE{1234}}}$
\vspace{2pt}
\\
\hline
\\[-9pt]
\gB{1}{2}{3}{4}
&
$
\left| \begin{array}{ccc}
z_{\iiiE{1}234} & \cdots & z_{\iiiE{4}234} \\
\vdots &\ddots& \vdots \\
z_{\iiiE{1}123} & \cdots & z_{\iiiE{4}123}
\end{array}\right|
$
&
$\INV{{}^{\iiiE{1234}}}$
\vspace{2pt}
\\
\hline
\\[-9pt]
\gCa{1}
&
$\begin{array}{r}
2(y_{\iiE{1}12}y_{\iiE{1}34}-y_{\iiE{1}13}y_{\iiE{1}24}+y_{\iiE{1}14}y_{\iiE{1}23})
\end{array}$
&
$\INV{\iiE{11}}$
\vspace{2pt}
\\
\hline
\\[-9pt]
\gCb{1}{2}
&
$\begin{array}{lr}
&y_{\iiE{1}12}y_{\iiE{2}34}-y_{\iiE{1}13}y_{\iiE{2}24}+y_{\iiE{1}14}y_{\iiE{2}23} \\
+ & y_{\iiE{1}34}y_{\iiE{2}12}-y_{\iiE{1}24}y_{\iiE{2}13}+y_{\iiE{1}23}y_{\iiE{2}14}
\end{array}$
&
$\INV{\iiE{12}}$
\vspace{2pt}
\\
\hline
\\[-9pt]
\gD{1}{2}{3}{4}{5}{6}%
\hspace{-1mm}%
\gS{-}%
\hspace{-1mm}%
\gD{6}{1}{2}{3}{4}{5}
&
$
\left| \begin{array}{ccc}
y_{\iiE{1}12} & \cdots & y_{\iiE{6}12} \\
\vdots &\ddots& \vdots \\
y_{\iiE{1}34} & \cdots & y_{\iiE{6}34}
\end{array}\right|
$
&
$\INV{\iiE{123456}}$
\vspace{2pt}
\\
\hline
\\[-9pt]
\gE{1}{1}
&
$
x_{\iE{1}1}z_{\iiiE{1}234}-x_{\iE{1}2}z_{\iiiE{1}134}+x_{\iE{1}3}z_{\iiiE{1}124}-x_{\iE{1}4}z_{\iiiE{1}123}
$
&
$\INV{{}_{\iE{1}}{}^{\iiiE{1}}}$
\vspace{2pt}
\\
\hline
\\[-9pt]
\gF{1}{1}{2}
&
$\begin{array}{lr}
&y_{\iiE{1}12}(x_{\iE{1}3}x_{\iE{2}4}\!-\!x_{\iE{1}4}x_{\iE{2}3})+y_{\iiE{1}13}(x_{\iE{1}4}x_{\iE{2}2}\!-\!x_{\iE{1}2}x_{\iE{2}4})\\
+& y_{\iiE{1}14}(x_{\iE{1}2}x_{\iE{2}3}\!-\!x_{\iE{1}3}x_{\iE{2}2})+ y_{\iiE{1}23}(x_{\iE{1}1}x_{\iE{2}4}\!-\!x_{\iE{1}4}x_{\iE{2}1}) \\
+ & y_{\iiE{1}24}(x_{\iE{1}3}x_{\iE{2}1}\!-\!x_{\iE{1}1}x_{\iE{2}3})+y_{\iiE{1}34}(x_{\iE{1}1}x_{\iE{2}2}\!-\!x_{\iE{1}2}x_{\iE{2}1})
\end{array}$
&
$\INV{{}_{\iE{1}}\iiE{1}_{\iE{2}}}$
\vspace{2pt}
\\
\hline
\\[-9pt]
\gGa{1}{2}{3}{1}
&
$
96
$
&
$\INV{{}_{\iE{1}}\iiE{123}_{\iE{1}}}$
\vspace{2pt}
\\
\hline
\\[-9pt]
\gGb{1}{2}{3}{1}{2}
&
$
108
$
&
$\INV{{}_{\iE{1}}\iiE{123}_{\iE{2}}}$
\vspace{2pt}
\\
\hline
\\[-9pt]
\gGc{1}{2}{3}{4}{5}{1}{2}
&
$
972
$
&
$\INV{{}_{\iE{1}}\iiE{12345}_{\iE{2}}}$
\vspace{2pt}
\\
\hline
\\[-9pt]
\gHa{1}{1}{2}
&
$12$
&
$\INV{{}^{\iiiE{1}}\iiE{1}^{\iiiE{2}}}$
\vspace{2pt}
\\
\hline
\\[-9pt]
\gHb{1}{2}{3}{1}
&
$
96
$
&
$\INV{{}^{\iiiE{1}}\iiE{123}^{\iiiE{1}}}$
\vspace{2pt}
\\
\hline
\\[-9pt]
\gHc{1}{2}{3}{1}{2}
&
$
108
$
&
$\INV{{}^{\iiiE{1}}\iiE{123}^{\iiiE{2}}}$
\vspace{2pt}
\\
\hline
\\[-9pt]
\gHd{1}{2}{3}{4}{5}{1}{2}
&
$
972
$
&
$\INV{{}^{\iiiE{1}}\iiE{12345}^{\iiiE{2}}}$
\vspace{2pt}
\\
\hline
\\[-9pt]
\gIa{1}{2}{1}{1}
&
$
36
$
&
$\INV{{}_{\iE{1}}\iiE{12}^{\iiiE{1}}}$
\vspace{2pt}
\\
\hline
\\[-9pt]
\gIb{1}{2}{3}{4}{1}{1}
&
$
324
$
&
$\INV{{}_{\iE{1}}\iiE{1234}^{\iiiE{1}}}$

\end{longtable}

\end{theorem}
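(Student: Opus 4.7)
The plan is to combine the bracket-to-hypergraph translation with a systematic use of the Plücker relation, following the program laid out in the introduction. By the First Fundamental Theorem of Grosshans-Rota-Stein, $\CC[W]^{\SL_4}$ is spanned by the invariants attached to bracket monomials, and after the hypergraph translation these correspond to colored hypergraphs whose vertices all have incidence-multiplicity $4$. Since the disjoint union of two hypergraphs produces the product of the two invariants, it suffices to find a generating set consisting of \emph{connected} colored hypergraphs, and to verify that the list in the theorem is precisely such a family and is minimal.

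First I would record the numerical constraint: a connected hypergraph with $v$ vertices and $n_i$ many $i$-edges (counted without color) satisfies $n_1 + 2n_2 + 3n_3 = 4v$. Together with connectedness and the fact that no $i$-edge has more than $i \leq 3$ endpoints, this bounds the possible \emph{shapes} and permits a case analysis on $v$. For each shape not appearing in the table I would exhibit a \emph{reduction}: using the Plücker relation
\[
\sum_{(u_1,u_2)\vdash(u)} [u_1][u_2 w] \;=\; 0,
\]
applied at a suitably chosen pair of adjacent vertices, one rewrites the hypergraph as an integer linear combination of hypergraphs that are strictly smaller in a well-chosen monomial order on the triple $(v,\, n_3,\, n_2)$, or that are disconnected (so their invariants are products of simpler ones). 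Iterating, every bracket monomial reduces modulo Plücker to a polynomial in the listed generators. The three determinantal entries $\INV{{}_{\iE{1234}}}$, $\INV{{}^{\iiiE{1234}}}$, and $\INV{\iiE{123456}}$ serve as base cases where no Plücker move is available.

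For \emph{minimality} I would argue one generator $G$ at a time via a specialization argument. Choose values of the coordinate functions $x_{\iE{j}i}$, $y_{\iiE{j}kl}$, $z_{\iiiE{j}klm}$ realising a subspace configuration that hits the incidence pattern of $G$ but fails every other pattern in the list; then the remaining generators specialize to zero while $G$ does not, so no polynomial relation can express $G$ in terms of the others. Concretely, placing generic linear subspaces whose mutual positions realise exactly the incidences encoded by the hypergraph of $G$, and which are in general position otherwise, does the job.

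The main obstacle will be the reduction step. A priori infinitely many connected shapes exist (arbitrarily long paths of $2$-edges, mixed trees of $3$-edges and $1$-edges, large cycles), so one must prove that every such shape can be cut by a single Plücker move into pieces that are both strictly smaller in the chosen order and still satisfy the vertex-degree-$4$ constraint. Controlling the degree-$4$ condition during each reduction, and verifying that the residuals remain in the combinatorial class under consideration, is the delicate heart of the argument and precisely the place where the hypergraph viewpoint is expected to pay off: local moves on graphs are well-suited to induction on $v$ and on the number of multi-edges.
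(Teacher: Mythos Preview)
Your high-level plan matches the paper's: translate brackets to hypergraphs, reduce every connected graph to the listed shapes via Pl\"ucker, then verify minimality. But the central reduction device you propose is not going to work as stated. A single Pl\"ucker relation rewrites a bracket monomial as a sum of bracket monomials with \emph{exactly the same multiset of letters and the same number of brackets}; on the graph side this means every term has the same number $v$ of vertices and the same tuple $(n_1,n_2,n_3)$ of $i$-edges. So $(v,n_3,n_2)$ is constant across any Pl\"ucker identity and cannot serve as a well-ordering that decreases. The only way a reduction step makes progress is when some of the output graphs happen to be \emph{disconnected}; the connected residuals are the same size as the input and you need a separate mechanism to control them.

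The paper supplies that mechanism in two pieces you are missing. First, a ``loop-creation'' lemma (their Lemma~3.10): any edge touching a vertex $v$ can, after a sequence of Pl\"ucker moves, be replaced by a looping edge at $v$, at the cost of adding graphs with the edges at $v$ rearranged. This is what drives the classification: once every vertex carries a maximal number of loops, the remaining ``effective'' graph has vertices of virtual degree $\le 2$ (for forms 7--9) or exactly $2$ (form 4), so it is forced to be a chain or a cycle. Second, to kill long chains and cycles (length $\ge 4$), the paper does not find a single decreasing Pl\"ucker move; instead it applies Pl\"ucker twice and uses a colour-swapping symmetry (Lemma~4.4: permuting $2$-edge colours contributes the sign of the permutation) to show the graph is reducibly equivalent to minus itself, hence reducible. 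This parity trick is the real content of the bound on chain/cycle length, and your outline does not reach it.

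Your minimality argument via geometric specialisation is different from the paper's and, as written, too vague to be convincing: for instance, you would need a configuration of two points and five lines that makes $\INV{{}_{\iE 1}\iiE{12345}_{\iE 2}}$ nonzero while every product of lower-degree invariants in the list vanishes, and it is not clear that generic position ``respecting the incidence pattern'' achieves this. The paper instead builds, for each candidate graph $\Gamma$, a linear functional $\phi:G\to\CC$ with $\phi(\Gamma)\neq 0$, $\phi=0$ on all disconnected graphs, and $\phi$ constant on Pl\"ucker-equivalent graphsums; checking compatibility with the finitely many relevant Pl\"ucker relations is a finite verification. (They note one could alternatively compare with a Hilbert series computation.)
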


\begin{corollary}
All geometric incidence relations between $n_1$ points, $n_2$ lines, and $n_3$ planes in $\mathbb{P}_3(\CC)$ can be expressed by means of the invariants from Theorem~\ref{th:fftSL4}.
\end{corollary}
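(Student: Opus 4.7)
The plan is to deduce the corollary as a direct geometric consequence of Theorem~\ref{th:fftSL4} combined with the standard dictionary between projective linear subspaces of $\mathbb{P}_3(\CC)$ and the fundamental $\SL_4$-representations. I would begin by identifying each projective object with an element of the respective fundamental representation: a point of $\mathbb{P}_3(\CC)$ is (up to scalar) an element of $V_{1,j}=\CC^4$; a line corresponds, via the Pl\"ucker embedding, to a decomposable element of $V_{2,j}=\BigWedge^2\CC^4$; and a plane corresponds to an element of $V_{3,j}=\BigWedge^3\CC^4$, automatically decomposable since $n-1=3$. A configuration of $n_1$ points, $n_2$ lines, and $n_3$ planes thus becomes (the projective class of) a point in the subvariety of $W$ cut out by the quadratic Pl\"ucker relations on the $V_{2,j}$-factors.

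Next, I would argue that any geometric incidence relation depends only on the underlying linear subspaces, not on the chosen nonzero representatives, and is preserved under arbitrary change of projective coordinates. Hence the locus it defines in $W$ is closed, $\SL_4$-stable, and multi-homogeneous in the coordinates of each $V_{i,j}$. Since $\SL_4$ is reductive, the defining ideal of this locus is generated by $\SL_4$-invariant multi-homogeneous polynomials in $\CC[W]$.

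At this point Theorem~\ref{th:fftSL4} applies directly: every such $\SL_4$-invariant lies in the subalgebra generated by the invariants listed there. Consequently, the vanishing of any geometric incidence relation is equivalent to the simultaneous vanishing of polynomial expressions in these listed invariants, which is precisely the statement of the corollary. It is also illuminating to attach each classical incidence condition to an explicit generator---for instance $\INV{{}_{\iE{1}}{}^{\iiiE{1}}}$ expresses incidence of a point with a plane, $\INV{\iiE{11}}$ expresses decomposability of a $2$-form (i.e.~the condition that it represents an actual projective line), $\INV{{}_{\iE{1234}}}$ vanishes when four points are coplanar, and so on---thereby making the geometric content of the statement explicit.

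The main obstacle, such as it is, lies less in invariant theory than in the geometric translation: one has to formalize what a ``geometric incidence relation'' means as an $\SL_4$-invariant closed condition on the configuration space and keep track of projective scaling so that multi-homogeneity of the generating invariants matches the projective nature of the conditions. Once this bookkeeping is carried out, and one verifies that the generators indeed form a set of bi-/multi-homogeneous invariants, the corollary follows immediately from Theorem~\ref{th:fftSL4}.
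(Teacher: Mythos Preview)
Your proposal is correct and follows essentially the same approach as the paper, which states the corollary without explicit proof as an immediate consequence of Theorem~\ref{th:fftSL4} together with the standard dictionary (cited from \cite{Stu} and \cite[\S 11]{dolga}) between incidence geometry of linear subspaces of $\mathbb{P}_{n-1}$ and vanishing of $\SL_n$-invariants. You have simply spelled out in more detail the translation step that the paper leaves to these references.
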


It follows a generating set for the respective invariants of $\SL_5$. Here the number of graphs coming into consideration turns out to be considerably larger as in the case of $\SL_4$. Thus in the following theorem, we do not give all different colorings of each graph. We did this exemplarily in the special case $n_i=0$ for $i\neq 2$ (for graphs only with $2$-edges in other words), see Proposition~\ref{prop:SL52} at the beginning of Section~\ref{sec:SL5}. 

Moreover, by the duality of $\BigWedge^iV$ and $\BigWedge^{n-i}V$, we have \emph{mirror invariants} and also mirror graphs, where $i$-edges of the one correspond to $(n-i)$-edges of the other. For example, the first two graphs from Theorem~\ref{th:fftSL4} are mirrors, while graphs three to six are their own mirror each. On the invariant side, it is straightforward to get the mirror by replacing $x_{\iE{j}a}$ by $z_{\iiiE{j}bcd}$ and vice versa, as well as $y_{\iiE{j}ab}$ by $y_{\iiE{j}cd}$ - where $\{a,b,c,d\}=\{1,2,3,4\}$ - in the case of $\SL_4$, and analogously for greater $n$. Thus we consider only one graph of each mirror pair in the following theorem. 
Lastly, there are the following types of 'building blocks' that can be attached to some of the graphs:
\begin{center}

\end{center}
\end{theorem}

\begin{corollary}
All geometric incidence relations between $n_1$ points, $n_2$ lines, $n_3$ planes, and $n_4$ hyperplanes in $\mathbb{P}_4(\CC)$ can be expressed by means of the invariants from Theorem~\ref{th:fftSL5}.
\end{corollary}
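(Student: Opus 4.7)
The corollary is a straightforward geometric interpretation of Theorem~\ref{th:fftSL5}, and my plan is to make this translation explicit. First I would recall the standard correspondence: an $r$-dimensional projective subspace of $\mathbb{P}_4(\CC)$ is exactly an $(r+1)$-dimensional linear subspace of $\CC^5$, and such a subspace is represented (up to scalar) by a decomposable element of $\BigWedge^{r+1}\CC^5$ via its Pl\"ucker embedding. Thus a configuration of $n_1$ points, $n_2$ lines, $n_3$ planes and $n_4$ hyperplanes corresponds to a point of the product of Grassmannians $\mathrm{Gr}(1,5)^{n_1} \times \mathrm{Gr}(2,5)^{n_2} \times \mathrm{Gr}(3,5)^{n_3} \times \mathrm{Gr}(4,5)^{n_4}$, which sits naturally inside the space $W$ (from the introduction, taking $n=5$) as a closed $\SL_5$-invariant subvariety.

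Next I would argue that a polynomial condition on such configurations qualifies as \emph{geometric} precisely when it is preserved by arbitrary linear change of coordinates of $\CC^5$, i.e. when it descends from a polynomial on $W$ that is invariant under $\mathrm{GL}_5$. Since the $\mathrm{GL}_5$-invariants differ from the $\SL_5$-invariants only by the determinantal character (which is a unit on each graded component and does not affect vanishing loci), every geometric incidence relation is cut out by a collection of $\SL_5$-invariant polynomials on $W$, restricted to the Grassmannian locus. Conversely, the vanishing of such an invariant on a configuration is a coordinate-free, hence geometric, condition.

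By Theorem~\ref{th:fftSL5}, the ring $\CC[W]^{\SL_5}$ is generated by the listed hypergraph invariants. Therefore any $\SL_5$-invariant polynomial vanishing on the configurations in question is a polynomial combination of those generators, and the corresponding incidence condition is expressed by a polynomial combination of vanishing conditions coming from the generating invariants. This completes the argument.

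The only subtle point, and the one I would spell out carefully, is the restriction to the Grassmannian locus inside $W$: an arbitrary element of $\BigWedge^i\CC^5$ need not be decomposable, so one must note that the Grassmannian is itself $\SL_5$-invariant and that its defining Pl\"ucker relations are already among the invariants (or are automatic consequences of restricting to decomposable elements when assigning geometric meaning). Apart from this bookkeeping, no new work is needed beyond Theorem~\ref{th:fftSL5}.
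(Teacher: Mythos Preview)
Your proposal is correct and is essentially the approach the paper has in mind: the paper does not give any proof of this corollary, treating it as an immediate consequence of Theorem~\ref{th:fftSL5} together with the standard correspondence between $\SL_n$-invariants of sums of fundamental representations and incidence geometry of linear subspaces (for which the introduction simply cites~\cite{Stu} and~\cite[\S 11]{dolga}). Your write-up just makes this folklore translation explicit.
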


Finally, we present relations that hold between the generators of $\CC[W]^{\SL_4}$. Most of them can be generalized to $n\geq 5$. The notation 
$$
(u_1, \ldots ,u_M)\vdash(u)
$$ 
means that we sum over all partitions of the word $u$ in $M$ subwords $u_k=l_{k,1} \cdots l_{k,N_k}$ of the lengths $N_k$. If we have an ordering on the letters of $u$, then we require $u$ and all subwords to be ordered and for every summand define $\sgn(\vdash)$ to be the sign of the underlying permutation of letters $u \mapsto u_1 \cdots u_M$.

\begin{theorem}\label{th:SL4rel}
The following sums of graphs correspond to polynomials in the ideal of relations of $\CC[W]^{\SL_4}$, where the not necessarily connected (sub-)graphs $\Gamma$ and $\Gamma_i$ have to be chosen such that all involved graphs are out of the minimal generating set from Theorem~\ref{th:fftSL4}. 
\begin{longtable}{rl}
\gS{\Upsilon_1=}%
&
\gSu{(\iE{ijkl,m})\vdash(\iE{12345})}{}\gA{i}{j}{k}{l} 
\begin{tikzpicture}[baseline=(A),outer sep=0pt,inner sep=0pt]
\node (A) at (0,0) {};
\onee{0}{m}
\draw[line width=0.5pt, draw=black] (0,0.38)-- (0,0);
\draw[black, fill=black] (0,0) circle [radius=1.5pt];
\gra{0}
\end{tikzpicture}%
\gES{,}
\quad
\gS{\Upsilon_2=}%
\gA{1}{2}{3}{4}%
\begin{tikzpicture}[baseline=(A),outer sep=0pt,inner sep=0pt]
\twoed{0}{2}
\twoe{0}{1}
\draw[black, fill=black] (0,0) circle [radius=1.5pt];
\end{tikzpicture}
\gS{-}%
\gSu{(\iE{ij,kl})\vdash(\iE{1234})}{}%
\gF{1}{i}{j}%
\gF{2}{k}{l}%
\gES{,}
\vspace{7pt} \\
\gS{\Upsilon_3=}%
&
\gA{1}{2}{3}{4}%
\begin{tikzpicture}[baseline=(A),outer sep=0pt,inner sep=0pt]
\node (A) at (0,0) {};
\draw[line width=0.5pt, draw=black] (0.7,0)-- (0,0);
\draw[line width=0.5pt, draw=black] (0.7,0)-- (0.35,-0.4);
\draw[line width=0.5pt, draw=black] (0.35,-0.4)-- (0,0);
\draw[black, fill=black] (0,0) circle [radius=1.5pt];
\draw[black, fill=black] (0.7,0) circle [radius=1.5pt];
\node[font=\tiny,align=center] at (0.35,0.1) {$\iiE{1}$};

\draw[line width=0.5pt, draw=black] (0.7,0)-- (0.35,-0.3);
\draw[line width=0.5pt, draw=black] (0.7,0)-- (0.35,-0.6);
\draw[line width=0.5pt, draw=black] (0.35,-0.3)-- (0,0);
\draw[line width=0.5pt, draw=black] (0.35,-0.6)-- (0,0);

\node[circle, fill=white,draw,inner sep=1pt]  at (0.35,-0.4) {\small{$\Gamma$}};
\end{tikzpicture}
\gS{-}%
\gSu{(\iE{ij,k,l})\vdash(\iE{1234})}{}%
\gF{1}{i}{j}%
\begin{tikzpicture}[baseline=(A),outer sep=0pt,inner sep=0pt]
\oneeu{0}{k}
\oneeu{0.7}{l}
\draw[line width=0.5pt, draw=black] (0.7,0)-- (0.35,-0.4);
\draw[line width=0.5pt, draw=black] (0,0)-- (0.35,-0.4);
\draw[black, fill=black] (0,0) circle [radius=1.5pt];
\draw[black, fill=black] (0.7,0) circle [radius=1.5pt];

\draw[line width=0.5pt, draw=black] (0.7,0)-- (0.35,-0.3);
\draw[line width=0.5pt, draw=black] (0.35,-0.3)-- (0,0);
\draw[line width=0.5pt, draw=black] (0.7,0)-- (0.35,-0.6);
\draw[line width=0.5pt, draw=black] (0.35,-0.6)-- (0,0);

\node[circle, fill=white,draw,inner sep=1pt]  at (0.35,-0.4) {\small{$\Gamma$}};
\end{tikzpicture}%
\gES{,}
\vspace{7pt} \\
\gS{\Upsilon_4=}%
&
\gA{1}{2}{3}{4}%
\begin{tikzpicture}[baseline=(A),outer sep=0pt,inner sep=0pt]
\node (A) at (0,0) {};
\twoe{0}{1}
\draw[line width=0.5pt, draw=black] (0.7,0)-- (0,0);
\draw[line width=0.5pt, draw=black] (0.7,0)-- (0.35,-0.4);
\draw[line width=0.5pt, draw=black] (0.35,-0.4)-- (0,0);
\draw[black, fill=black] (0,0) circle [radius=1.5pt];
\draw[black, fill=black] (0.7,0) circle [radius=1.5pt];
\node[font=\tiny,align=center] at (0.35,0.1) {$\iiE{2}$};

\draw[line width=0.5pt, draw=black] (0.7,0)-- (0.35,-0.3);
\draw[line width=0.5pt, draw=black] (0.7,0)-- (0.35,-0.6);

\node[circle, fill=white,draw,inner sep=1pt]  at (0.35,-0.4) {\small{$\Gamma$}};
\end{tikzpicture}
\gS{+}%
\gSu{(\iE{ij,k,l})\vdash(\iE{1234})}{}%
\gF{1}{i}{j}%
\begin{tikzpicture}[baseline=(A),outer sep=0pt,inner sep=0pt]
\onee{0}{k}
\oneeu{0.7}{l}
\twoe{0}{2}
\draw[line width=0.5pt, draw=black] (0.7,0)--(0.35,-0.4);
\draw[line width=0.5pt, draw=black] (0,0)-- (0.35,-0.4);
\draw[black, fill=black] (0,0) circle [radius=1.5pt];
\draw[black, fill=black] (0.7,0) circle [radius=1.5pt];

\draw[line width=0.5pt, draw=black] (0.7,0)-- (0.35,-0.3);
\draw[line width=0.5pt, draw=black] (0.7,0)-- (0.35,-0.6);

\node[circle, fill=white,draw,inner sep=1pt] at (0.35,-0.4) {\small{$\Gamma$}};
\end{tikzpicture}
\gES{,}
\vspace{7pt} \\
\gS{\Upsilon_5=}%
&
\gA{1}{2}{3}{4}%
\begin{tikzpicture}[baseline=(A),outer sep=0pt,inner sep=0pt]
\thre{0}{1}
\draw[line width=0.5pt, draw=black] (0.7,0)-- (0,0);
\draw[black, fill=black] (0,0) circle [radius=1.5pt];
\draw[black, fill=black] (0.7,0) circle [radius=1.5pt];
\node[font=\tiny,align=center] at (0.35,0.1) {$\iiE{1}$};
\gra{0.7}
\end{tikzpicture}%
\gS{-}%
\gSu{(\iE{i,jk,l})\vdash(\iE{1234})}{}%
\gE{1}{i}%
\gF{1}{j}{k}%
\begin{tikzpicture}[baseline=(A),outer sep=0pt,inner sep=0pt]
\node (A) at (0,0) {};
\onee{0}{l}
\draw[black, fill=black] (0,0) circle [radius=1.5pt];
\draw[line width=0.5pt, draw=black] (0,0)-- (0,0.38);
\gra{0}
\end{tikzpicture}%
\gES{,}
\vspace{7pt} \\
\gS{\Upsilon_6=}%
&
\gSu{(\iE{i},\iE{j})\vdash(\iE{12})}{}%
\gCb{1}{2}
\begin{tikzpicture}[baseline=(A),outer sep=0pt,inner sep=0pt]
\node (A) at (0,0) {};
\onee{0}{i}
\draw[line width=0.5pt, draw=black] (0,0.38)-- (0,0);
\draw[black, fill=black] (0,0) circle [radius=1.5pt];
\grb{0}{\Gamma_1}
\end{tikzpicture}
\begin{tikzpicture}[baseline=(A),outer sep=0pt,inner sep=0pt]
\node (A) at (0,0) {};
\onee{0}{j}
\draw[line width=0.5pt, draw=black] (0,0.38)-- (0,0);
\draw[black, fill=black] (0,0) circle [radius=1.5pt];
\grb{0}{\Gamma_2}
\end{tikzpicture}%
\vspace{4pt} \\
&
\gS{-}%
\gSu{(i,j)\vdash(12)}{}%
\gS{\sgn(\vdash)\Biggl( }%
\begin{tikzpicture}[baseline=(A),outer sep=0pt,inner sep=0pt]
\node (A) at (0,0) {};
\onee{0}{1}
\draw[line width=0.5pt, draw=black] (0,0.38)-- (0,0);
\draw[black, fill=black] (0,0) circle [radius=1.5pt];
\grb{0}{\Gamma_i}
\end{tikzpicture}
\begin{tikzpicture}[baseline=(A),outer sep=0pt,inner sep=0pt]
\node (A) at (0,0) {};
\twoe{0}{1}
\onee{0}{2}
\draw[line width=0.5pt, draw=black] (0.7,0)-- (0,0);
\draw[line width=0.5pt, draw=black] (0.7,0)-- (0.7,0.38);
\draw[black, fill=black] (0,0) circle [radius=1.5pt];
\draw[black, fill=black] (0.7,0) circle [radius=1.5pt];
\node[font=\tiny,align=center] at (0.35,0.1) {$\iiE{2}$};
\grb{0.7}{\Gamma_j}
\end{tikzpicture}%
\gS{+}
\begin{tikzpicture}[baseline=(A),outer sep=0pt,inner sep=0pt]
\node (A) at (0,0) {};
\onee{0}{2}
\draw[line width=0.5pt, draw=black] (0,0.38)-- (0,0);
\draw[black, fill=black] (0,0) circle [radius=1.5pt];
\grb{0}{\Gamma_i}
\end{tikzpicture}
\begin{tikzpicture}[baseline=(A),outer sep=0pt,inner sep=0pt]
\node (A) at (0,0) {};
\twoe{0}{1}
\onee{0}{1}
\draw[line width=0.5pt, draw=black] (0.7,0)-- (0,0);
\draw[line width=0.5pt, draw=black] (0.7,0)-- (0.7,0.38);
\draw[black, fill=black] (0,0) circle [radius=1.5pt];
\draw[black, fill=black] (0.7,0) circle [radius=1.5pt];
\node[font=\tiny,align=center] at (0.35,0.1) {$\iiE{2}$};
\grb{0.7}{\Gamma_j}
\end{tikzpicture}%
\gS{+}%
\gF{i}{1}{2}%
\begin{tikzpicture}[baseline=(A),outer sep=0pt,inner sep=0pt]
\node (A) at (0,0) {};
\draw[line width=0.5pt, draw=black] (0.7,0)-- (0,0);
\draw[black, fill=black] (0,0) circle [radius=1.5pt];
\draw[black, fill=black] (0.7,0) circle [radius=1.5pt];
\node[font=\tiny,align=center] at (0.35,0.1) {$\iiE{j}$};
\draw[line width=0.5pt, draw=black] (0.7,0)-- (0.7,0.38);
\draw[line width=0.5pt, draw=black] (0,0)-- (0,0.38);
\grb{0}{\Gamma_1}
\grb{0.7}{\Gamma_2}
\end{tikzpicture}%
\gS{\Biggr)}
\gES{,}
\vspace{7pt} \\
 \gS{\Upsilon_7=}%
 &
\gSu{(\iE{i,j,k})\vdash(\iE{123})}{}%
\begin{tikzpicture}[baseline=(A),outer sep=0pt,inner sep=0pt]
\node (A) at (0,0) {};
\onee{0}{i}
\draw[line width=0.5pt, draw=black] (0,0.38)-- (0,0);
\draw[black, fill=black] (0,0) circle [radius=1.5pt];
\grb{0}{\Gamma_1}
\end{tikzpicture}
\begin{tikzpicture}[baseline=(A),outer sep=0pt,inner sep=0pt]
\node (A) at (0,0) {};
\onee{0}{j}
\draw[line width=0.5pt, draw=black] (0,0.38)-- (0,0);
\draw[black, fill=black] (0,0) circle [radius=1.5pt];
\grb{0}{\Gamma_2}
\end{tikzpicture}
\begin{tikzpicture}[baseline=(A),outer sep=0pt,inner sep=0pt]
\node (A) at (0,0) {};
\twoe{0}{1}
\onee{0}{k}
\draw[line width=0.5pt, draw=black] (0.7,0)-- (0,0);
\draw[line width=0.5pt, draw=black] (0.7,0)-- (0.7,0.38);
\draw[black, fill=black] (0,0) circle [radius=1.5pt];
\draw[black, fill=black] (0.7,0) circle [radius=1.5pt];
\node[font=\tiny,align=center] at (0.35,0.1) {$\iiE{2}$};
\grb{0.7}{\Gamma_3}
\end{tikzpicture}%
\vspace{4pt} \\
&
\gS{+}%
\gSu{(\iE{i,jk})\vdash(\iE{123})}{}%
\gS{\Biggl(}%
\begin{tikzpicture}[baseline=(A),outer sep=0pt,inner sep=0pt]
\node (A) at (0,0) {};
\onee{0}{i}
\draw[line width=0.5pt, draw=black] (0,0.38)-- (0,0);
\draw[black, fill=black] (0,0) circle [radius=1.5pt];
\grb{0}{\Gamma_1}
\end{tikzpicture}
\gF{1}{j}{k}%
\begin{tikzpicture}[baseline=(A),outer sep=0pt,inner sep=0pt]
\node (A) at (0,0) {};

\draw[line width=0.5pt, draw=black] (0.7,0)-- (0,0);
\draw[black, fill=black] (0,0) circle [radius=1.5pt];
\draw[black, fill=black] (0.7,0) circle [radius=1.5pt];
\node[font=\tiny,align=center] at (0.35,0.1) {$\iiE{2}$};
\draw[line width=0.5pt, draw=black] (0.7,0)-- (0.7,0.38);
\draw[line width=0.5pt, draw=black] (0,0)-- (0,0.38);
\grb{0}{\Gamma_2}
\grb{0.7}{\Gamma_3}
\end{tikzpicture}%
\gS{+}%
\begin{tikzpicture}[baseline=(A),outer sep=0pt,inner sep=0pt]
\node (A) at (0,0) {};
\onee{0}{i}
\draw[line width=0.5pt, draw=black] (0,0.38)-- (0,0);
\draw[black, fill=black] (0,0) circle [radius=1.5pt];
\grb{0}{\Gamma_3}
\end{tikzpicture}
\gF{2}{j}{k}%
\begin{tikzpicture}[baseline=(A),outer sep=0pt,inner sep=0pt]
\node (A) at (0,0) {};

\draw[line width=0.5pt, draw=black] (0.7,0)-- (0,0);
\draw[black, fill=black] (0,0) circle [radius=1.5pt];
\draw[black, fill=black] (0.7,0) circle [radius=1.5pt];
\node[font=\tiny,align=center] at (0.35,0.1) {$\iiE{1}$};
\draw[line width=0.5pt, draw=black] (0.7,0)-- (0.7,0.38);
\draw[line width=0.5pt, draw=black] (0,0)-- (0,0.38);
\grb{0}{\Gamma_1}
\grb{0.7}{\Gamma_2}
\end{tikzpicture}
\gS{+}
\begin{tikzpicture}[baseline=(A),outer sep=0pt,inner sep=0pt]
\node (A) at (0,0) {};
\onee{0}{i}
\draw[line width=0.5pt, draw=black] (0,0.38)-- (0,0);
\draw[black, fill=black] (0,0) circle [radius=1.5pt];
\grb{0}{\Gamma_2}
\end{tikzpicture}
\gF{1}{j}{k}%
\begin{tikzpicture}[baseline=(A),outer sep=0pt,inner sep=0pt]
\node (A) at (0,0) {};

\draw[line width=0.5pt, draw=black] (0.7,0)-- (0,0);
\draw[black, fill=black] (0,0) circle [radius=1.5pt];
\draw[black, fill=black] (0.7,0) circle [radius=1.5pt];
\node[font=\tiny,align=center] at (0.35,0.1) {$\iiE{2}$};
\draw[line width=0.5pt, draw=black] (0.7,0)-- (0.7,0.38);
\draw[line width=0.5pt, draw=black] (0,0)-- (0,0.38);
\grb{0}{\Gamma_1}
\grb{0.7}{\Gamma_3}
\end{tikzpicture}%
\gS{\Biggr)}%
\gES{,}
\vspace{7pt} \\
\gS{\Upsilon_8=}%
&
\gS{2}
\gCb{1}{2}%
\gS{\Biggl(}%
\gD{3}{4}{5}{6}{7}{8}%
\gS{-}%
\gD{8}{3}{4}{5}{6}{7}%
\gS{\Biggr)}%
\vspace{4pt} \\
&
\gS{+}
\gSu{(\iiE{g},\iiE{h})\vdash(\iiE{12})}{}%
\gSu{(\iiE{i},\iiE{jklmn})\vdash(\iiE{345678})}{}%
\gS{(-1)^{i}}%
\gCb{g}{i}%
\gS{\Biggl(}%
\gD{h}{j}{k}{l}{m}{n}%
\gS{-}%
\gD{n}{h}{j}{k}{l}{m}%
\gS{\Biggr)}%
\vspace{4pt} \\
&
\gS{-}%
\gSu{(\iiE{i},\iiE{jk},\iiE{l})\vdash(\iiE{3456})}{}%
\gS{\sgn(\vdash)\Biggl(}%
\gCb{1}{8}%
\gCb{2}{i}%
\gCb{j}{k}%
\gCb{l}{7}%
\gS{+}%
\gCb{2}{8}%
\gCb{1}{i}%
\gCb{j}{k}%
\gCb{l}{7}%
\gS{\Biggr)}%
\gES{,}
\vspace{7pt} \\
\gS{\Upsilon_9=}
&
\gD{i_1}{i_2}{i_3}{i_4}{i_5}{i_6}%
\gD{j_1}{j_2}{j_3}{j_4}{j_5}{j_6}%
\gS{+\det\Biggl(}%
\gCb{i_r}{j_s}
\gS{\Biggr)_{1\leq r,s\leq 6}}
\vspace{7pt} \\
\gS{\Upsilon_{10}=}
&
\gS{\sum\limits_{(i_1\cdots i_6,j_1)\vdash(k_1\ldots k_7)}}
\gD{i_1}{i_2}{i_3}{i_4}{i_5}{i_6}%
\gD{j_1}{j_2}{j_3}{j_4}{j_5}{j_6}
\gES{.}
\end{longtable}
\end{theorem}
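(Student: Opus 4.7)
The plan is to invoke the second fundamental theorem of Grosshans--Rota--Stein from Section~\ref{sec:brainv}: every polynomial relation among bracket invariants is a consequence of iterated applications of the Plücker relation
$$\sum_{(u_1,u_2)\vdash(u)}[u_1][u_2 w]=0,$$
where $u$ has length five and $w$ has length three. Thus each $\Upsilon_k$ must be exhibited as such an iterated Plücker combination, then translated into the hypergraph language of Section~\ref{sec:bragra}.

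For $\Upsilon_1$ through $\Upsilon_5$ a single Plücker step should suffice. I would choose $u$ to be the length-five word whose letters correspond to the outgoing edges of the distinguished four-valent vertex on the left-hand side, together with one additional letter, the remaining letters being absorbed into $w$ and the subgraph $\Gamma$. The sum over partitions $(u_1,u_2)\vdash(u)$ then produces the right-hand side once one regroups terms according to which letters end up in the main determinant bracket and which attach to $\Gamma$. The cases differ only in whether the extra letter comes from a $1$-edge, the pair of a $2$-edge, or the triple of a $3$-edge.

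For $\Upsilon_6$, $\Upsilon_7$, and $\Upsilon_8$, which involve only graphs with $2$-form edges (and subgraphs $\Gamma_i$), the proof requires combining several Plücker applications so that graphs not belonging to the minimal generating set from Theorem~\ref{th:fftSL4} cancel. Here the letters of a $2$-form edge come in pairs; the key step is to choose $u$ containing both letters of one $2$-form together with letters drawn from the subgraphs, and then iterate. For $\Upsilon_8$ the three displayed summand-blocks correspond to three different valid choices of $u$ inside the bracket monomial associated with the pairing of a self-contracted $2$-form and a determinantal six-cycle.

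Relation $\Upsilon_9$ I plan to derive from the classical $6\times6$ Gram-type identity for $\BigWedge^2\CC^4$: since this six-dimensional space carries the canonical non-degenerate symmetric pairing $\omega\otimes\omega'\mapsto\omega\wedge\omega'\in\BigWedge^4\CC^4\cong\CC$, the product of the two top exterior powers factors as the determinant of pairings, which is precisely $\Upsilon_9$ on the graph side; alternatively it can be seen as the Laplace expansion of the double bracket. Finally $\Upsilon_{10}$ is obtained from a Plücker relation on a length-seven word of $2$-form letters, with $u$ chosen to consist of all six letters belonging to one six-cycle vertex together with a single letter of the other. The main obstacle throughout is sign bookkeeping: the parities of edges (odd for $1$- and $3$-edges, even for $2$-edges) and the chosen total orderings on letters in the Grosshans--Rota--Stein superalgebra must be tracked carefully to recover the exact coefficients displayed in each $\Upsilon_k$, and for $\Upsilon_8$ the combinatorics of the partitions interacts nontrivially with these signs.
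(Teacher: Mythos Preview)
Your overall strategy---deriving each $\Upsilon_k$ from Pl\"ucker/Exchange identities, with $\Upsilon_9,\Upsilon_{10}$ coming from the matrix identity $\det(A^{\mathrm T}B)=\det A\cdot\det B$ and the standard Pl\"ucker relation for $6\times 6$ determinants---is the same as the paper's, and your treatment of $\Upsilon_1$, $\Upsilon_9$, $\Upsilon_{10}$ is essentially correct. However, the middle of the proposal contains two genuine gaps.

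First, a single Pl\"ucker step does \emph{not} suffice for $\Upsilon_2$ through $\Upsilon_5$. What the paper does is ``pull over'' a $k$-edge from the second factor into the determinant bracket, and this requires $k$ successive applications of the Exchange Lemma (two for a $2$-edge, three for a $3$-edge). One application of the Pl\"ucker relation with $u$ chosen as you describe produces intermediate bracket monomials in which the $k$-edge is only partially transferred; these intermediates are not products of generators from Theorem~\ref{th:fftSL4}, so further reductions are required before the relation can be written in terms of the minimal generating set.

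Second, your description of $\Upsilon_6$ and $\Upsilon_7$ is off: these do \emph{not} ``involve only graphs with $2$-form edges''. The mechanism in the paper is entirely different from what you sketch. One starts with a \emph{reducible} graph in which two (for $\Upsilon_6$) or three (for $\Upsilon_7$) $1$-edges from different generators have been brought to a common vertex, and then applies the Exchange Lemma to redistribute these $1$-edges among the subgraphs $\Gamma_i$; the $2$-edges $\boldsymbol 1,\boldsymbol 2$ enter only because the redistribution passes through a vertex carrying a looping and a non-looping $2$-edge. Your proposed choice of $u$ (``both letters of one $2$-form together with letters drawn from the subgraphs'') does not produce the symmetry in the $1$-edge colors $\mathtt i,\mathtt j,\mathtt k$ that the relations exhibit. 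Similarly, $\Upsilon_8$ is obtained not from three parallel Pl\"ucker choices but from an iterative ``walk around the cycle'': one begins with the reducible four-vertex cycle, moves a single $2$-edge once around using the Exchange Lemma at each step, and compares the starting and ending expressions; the three displayed blocks are the telescoping residue of this walk, not independent Pl\"ucker applications.
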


These relations emerge from somewhat natural principles that are discussed in the proof of Theorem~\ref{th:SL4rel} in Section~\ref{sec:SL4rel}. This fact together with calculations of the Hilbert series for small values of $n_i$ using~\cite[\S 4.6]{KD} and Xin's  algorithm~\cite{Xin} for MacMahon partition analysis leads us to our final conjecture:

\begin{conjecture}
The graphsums from Theorem~\ref{th:SL4rel} generate the ideal of relations of $\CC[W]^{\SL_4}$.
\end{conjecture}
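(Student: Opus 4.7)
The plan is to combine two ingredients: (a) the Grosshans--Rota--Stein theorem that the only relations between bracket monomials are generated by the Plücker relations, and (b) a careful bookkeeping that translates each Plücker application into a relation among the specific generators listed in Theorem~\ref{th:fftSL4}. Concretely, I would first set up a presentation of $\CC[W]^{\SL_4}$ as a quotient of a polynomial ring in variables indexed by the generating graphs of Theorem~\ref{th:fftSL4}, and then lift relations from the bracket algebra along the natural surjection that sends a bracket monomial to its underlying colored hypergraph.

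Next, I would perform a case analysis on how a single Plücker relation can interact with a graph built out of these generators. Each $\Upsilon_i$ encodes such an interaction: $\Upsilon_1$ applies Plücker at a determinant vertex absorbing one extra $1$-edge from an attached subgraph $\Gamma$; $\Upsilon_2$--$\Upsilon_5$ cover a determinant vertex meeting the various $2$-edge and $3$-edge adjacency configurations; $\Upsilon_6$ and $\Upsilon_7$ handle Plücker splits on the "loop-pair" $2$-edge vertices; $\Upsilon_8$ encodes the cross-relation produced by reshuffling the six two-edges in the sextic determinantal generator; and $\Upsilon_9$, $\Upsilon_{10}$ are respectively the Laplace-expansion identity and the "too many letters" identity for this sextic generator. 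The goal of this step is to show that any Plücker relation, applied inside an arbitrary product of generators, can be rewritten as a $\CC[W]^{\SL_4}$-linear combination of $\Upsilon_i$'s with the attached subgraphs $\Gamma$ and $\Gamma_j$ suitably instantiated.

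To certify that no further relation type is missing, I would match the Hilbert series of $\CC[W]^{\SL_4}$, computed multivariately in the $n_i$ by MacMahon partition analysis via Xin's algorithm~\cite{Xin}, against the Hilbert series of the presentation modulo the ideal generated by the $\Upsilon_i$. Since the number of copies $n_i$ is unbounded, this must be done uniformly: choose a term order on graph-monomials compatible with the graph-reduction moves of Sections~\ref{sec:bragra} and \ref{sec:SL4}, and verify that the leading terms of $\Upsilon_1,\ldots,\Upsilon_{10}$, taken over all admissible instantiations of the attached subgraphs, dominate every leading monomial of the syzygy ideal. Equivalently, one can argue in the symbolic algebra that every Plücker-monomial relation admits, after "straightening" by the moves already validated in Theorem~\ref{th:fftSL4}, a decomposition into the ten listed types.

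The hard part will be exactly this completeness step. The generation statement in Theorem~\ref{th:fftSL4} rests ultimately on a \emph{finite} graph-reduction bounded by the complexity of $\SL_4$-invariants of low degree, whereas the relation ideal must be controlled across hypergraphs of unbounded size with unbounded numbers of colors. A $GL_4$-equivariant first-syzygy computation on the bracket algebra, or a termination argument for a suitable confluent rewriting system on colored hypergraphs, both look plausible; the former has the advantage of explaining why precisely these ten patterns suffice, while the latter is closer to the constructive proof of Theorem~\ref{th:fftSL4} and is what I would attempt first. The real risk is that a higher-order syzygy—one that cannot be written as a product of a generator with some $\Upsilon_i$—sneaks in at some degree accessible only to large $n_i$, and ruling this out seems to require either the equivariant argument above or an exhaustive check up to the degree at which Hilbert series stabilises in each variable separately.
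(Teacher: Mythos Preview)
The statement you are attempting to prove is labelled a \emph{Conjecture} in the paper, and the paper gives no proof of it. The author's only evidence is that the relations $\Upsilon_1,\ldots,\Upsilon_{10}$ arise from four ``natural principles'' (Principles 1a, 1b, 2, 3, 4a, 4b in Section~\ref{sec:SL4rel}) and that Hilbert series computations for small $n_i$ via Xin's algorithm are consistent with the conjecture. So there is no ``paper's own proof'' to compare against.

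Your proposal is not a proof either, and you say as much. The two concrete steps you describe --- lifting Pl\"ucker relations through the surjection from bracket monomials to invariants, and checking Hilbert series for small $n_i$ --- are precisely the evidence the paper already records; carrying them out would reproduce the author's grounds for the conjecture, not establish it. The genuine gap is the completeness step you flag yourself: you need to show that \emph{every} Pl\"ucker relation, after straightening each bracket monomial into a polynomial in the generators of Theorem~\ref{th:fftSL4}, lies in the ideal generated by the $\Upsilon_i$. Neither of your suggested routes is actually executed. A confluent rewriting system on colored hypergraphs would require a term order and a proof that the $\Upsilon_i$ form a Gr\"obner-type basis for the syzygy ideal with respect to it, uniformly in the $n_i$; you have not specified the order or argued termination. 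The $GL_4$-equivariant first-syzygy computation is likewise only named, not performed, and it is not clear that such a computation remains finite once one allows unbounded $n_1,n_2,n_3$. Finally, Hilbert series agreement for finitely many small $n_i$ cannot certify the result for all $n_i$ without a separate stabilisation argument, which you do not supply.

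In short: your outline is reasonable as a strategy and matches the author's own heuristics, but the decisive step --- uniform completeness of the $\Upsilon_i$ across all $(n_1,n_2,n_3)$ --- is exactly what remains open, both in your proposal and in the paper.
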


The paper is organized as follows: in Section~\ref{sec:brainv}, we give the necessary background on the symbolic method from~\cite{GRS}, then in Section~\ref{sec:bragra} we introduce (edge-)colored hypergraphs and give some general statements on the behaviour of these under relations, and restrictions concerning colorings. In Sections~\ref{sec:SL4}  and~\ref{sec:SL5}, we develop techniques in order to find the generating sets of $\CC[W]^{\SL_4}$ and $\CC[W]^{\SL_5}$ of Theorems~\ref{th:fftSL4} and \ref{th:fftSL5} respectively. Most of these techniques are applicable for general $n$. In Section~\ref{sec:SL4rel}, we prove Theorem~\ref{th:SL4rel}. The relation-generating principles are applicable for $n\geq5$ as well.
Finally, Section~\ref{sec:outlook} gives an outlook on possible further applications of the hypergraph method.

\tableofcontents

\section{Invariants and brackets}\label{sec:brainv}

This section is merely a summary of the parts of~\cite{GRS} that are relevant for antisymmetric tensors.
All notation is as close as possible to the one from~\cite{GRS}.
Fix a natural number $n$, a complex vector space $V$ of dimension $n$ and a basis $e_1,\ldots,e_n$ of $V$. Now let the special linear group $\SL_n$ act on $V$ by multiplication from the left. This induces an action of $\SL_n$ on $\BigWedge^iV$ for every $1 \leq i \leq n-1$. Fix some integer $n_i \geq 0$ for every such $i$ and set $V_{i,j}:=\BigWedge^iV$ for every $1 \leq i \leq n-1$ and $ 1 \leq j \leq n_i$. Then the action of $\SL_n$ on $V$ finally induces an action on 
\begin{align*}
W:= W_{(n_1,\ldots,n_{n-1})} =& \bigoplus_{i=1}^{n-1} \bigoplus_{j=1}^{n_i} V_{i,j} 
= \bigoplus_{i=1}^{n-1}\left(\bigwedge\nolimits^iV\right)^{n_i}.
\end{align*}

We can identify the ring of polynomial functions on $W$ with $\CC\left[T_{i,j,\iota_1\cdots\iota_{i}}\right]$, where   $1 \leq j \leq n_i$, $\{\iota_1<\ldots<\iota_{i}\} \subseteq \{1,\ldots,n\} $. We do this by linearly mapping $T_{i,j,\iota_1\cdots\iota_{i}}$ onto a function $f$ so that for an element
$$
t=\sum_{i=1}^{n-1} \sum_{j=1}^{n_i} \sum_{\iota_1 < \ldots < \iota_{\nu_i}} t_{i,j,\iota_1\cdots\iota_{i}} e_{\iota_1} \wedge \ldots \wedge e_{\iota_{i}}
$$
of $W$ we have $f(t)=t_{i,j\cdots\iota_{i}}$. 

Now following~\cite{GRS} we introduce an ordered alphabet $P=\{1,\ldots,n\}$ with $1<\ldots<n$ of  so called \emph{places} and the algebra ${\rm Ext}(P)$, which is the exterior algebra generated by the places. We denote multiplication in ${\rm Ext}(P)$ by juxtaposition.
Moreover, for every $V_{i,j}$, we introduce an infinite number of so called \emph{letters} $a_{i,j,k}$ for all $k \in \mathbb{N}$ forming the alphabet $L$. We set $a_{i_1,j_1,k_1} < a_{i_2,j_2,k_2}$ if either $i_1 < i_2$ or  $i_1=i_2$ and $j_1<j_2$ or $i_1=i_2$ and $j_1=j_2$ and $k_1<k_2$.

\begin{definition}
Let $A$ be an alphabet, then the \emph{divided powers algebra} ${\rm Div}(A)$ is the commutative algebra generated by symbols $a^{(i)}$, where $a \in A$ and $i \in \NN$. We denote multiplication in ${\rm Div}(A)$ by juxtaposition. Moreover, we set $a^{(0)}=1$ and $a^{(1)}=a$ and impose the identity
$$
a^{(i)}a^{(j)} = \binom{i+j}{j} a^{(i+j)}.
$$
We define the length of the word $a^{(i)}$ to be $|a^{(i)}|=i$.
\end{definition}

Now we proceed with the divided powers algebra ${\rm Div}(L)$ generated by the alphabet $L$ of letters and define a third alphabet $\left[ L | P \right]$, the \emph{letterplace alphabet} having as elements pairs $(x | \alpha )$, where $x\in L$, $\alpha \in P$. The algebra ${\rm Ext}(\left[ L | P \right])$ is called the \emph{fourfold algebra}.

\begin{definition}
We define a bilinear form
$$
(*|*): {\rm Div}(L) \times {\rm Ext}(P) \to {\rm Ext}(\left[ L | P \right]),
$$
called the \emph{biproduct}, by the following:
\begin{enumerate}
\item $(w | v )=0$ if $w$ and $v$ are words of different length.
\item $(w | v )= ( x | \alpha)$ if $w=x$ is a letter and $v=\alpha$ is a place, thus the image is a single letterplace.
\item $(1 | 1)=1$.
\item $ (w | v u ) = \sum_{w_1 w_2 = w} \left( w_1 | v \right) \left( w_2 | u \right)$, where the sum ranges over all pairs $w_1, w_2$ of subwords of $w$ such that $w_1w_2=w$.  
\item $ (v u | w) = \sum_{w_1 w_2 = \pm w} (-1)^{\delta(w_1,w_2)} \left(v | w_1 \right) \left( u | w_2 \right)$, where $\delta(w_1,w_2)$ is the number of transpositions needed to obtain the word $w$ from the word $w_1w_2$. 
\end{enumerate}
\end{definition}

We give some examples to clarify these rules.

\begin{example}
Let $a \in L$. We have a look at the image of  $(a^{(2)},12)$ under the biproduct. First we want to use Rule $(iv)$ with $v=1$, $u=2$. Due to Rule $(i)$ we only have to consider pairs of subwords of length one. We have $a^{(2)}=\frac{1}{2}aa$ and we have \emph{two} pairs of possible subwords of length one, since we have to distinguish the two $a$'s. Thus we get
$$
(a^{(2)}|12)=\frac{1}{2}(aa|12)=\frac{1}{2}((a|1)(a|2)+(a|1)(a|2))=(a|1)(a|2).
$$   
Using Rule $(v)$ instead, we compute
$$
(a^{(2)}|12)=\frac{1}{2}(aa|12)=\frac{1}{2}((a|1)(a|2)-(a|2)(a|1))=(a|1)(a|2).
$$
More generally, for arbitrary $i,j,k,l$, we get
$$
\left( a_{i,j,k}^{(l)} | i_1\ldots i_l \right) = \left( a_{i,j,k}| i_1 \right) \cdots \left( a_{i,j,k} | i_l \right).
$$
For different letters $a, b \in L$, we compute
$$
(ab|12)=(a|1)(b|2)+(b|1)(a|2)=(ba|12).
$$ 
\end{example}

Now for letters $a_1,\ldots, a_n$, we define the \emph{bracket} in $a_1,\ldots, a_n$ to be the element
$$
[a_1 \ldots a_n]:=\left(a_1 \cdots a_n | 1 \cdots n \right)
$$
of ${\rm Ext}(\left[ L | P \right])$.
A \emph{bracket monomial} is a product of brackets and a \emph{bracket polynomial} is a linear combination of bracket monomials. We denote the subalgebra of all bracket polynomials of ${\rm Ext}(\left[ L | P \right])$ by ${\rm Br}(L)$.

\begin{lemma}[\cite{GRS}, 'Exchange Lemma', p. 60]
\label{le:exch}
Let $u,v,w$ be words in $\mathrm{Div}(L)$. Then
$$
\sum_{u_1u_2=u} \left[u_1v\right] \left[ u_2 w \right] 
\ = \
(-1)^{n-|w|} \sum_{v_1v_2=v} \left[v_1u\right] \left[ v_2 w \right].
$$
\end{lemma}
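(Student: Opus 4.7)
The plan is to prove the identity by a direct expansion using the biproduct rules (i)--(v), reducing both sides to a common expression in the fourfold algebra. First, I would observe that rule (i) forces each bracket to involve a word of length exactly $n$, so both sides vanish unless $|u|+|v|+|w|=2n$. I will therefore assume this length condition throughout; it is also what makes the prefactor $n-|w|$ in the lemma well-defined on parities.

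The core idea is to unpack each bracket $[u_1 v]=(u_1 v\mid 1\cdots n)$ using rule (v). Applied to the product $u_1 v$ in $\mathrm{Div}(L)$, this gives
$$[u_1 v]=\sum_{\alpha\beta=\pm(1\cdots n)}(-1)^{\delta(\alpha,\beta)}(u_1\mid\alpha)(v\mid\beta),$$
summed over ordered splittings of $1\cdots n$ with $|\alpha|=|u_1|$, $|\beta|=|v|$, and similarly for $[u_2 w]$ with splitting $\gamma\varepsilon$. Substituting into the left-hand side and interchanging the order of summation, the inner sum $\sum_{u_1 u_2=u}(u_1\mid\alpha)(u_2\mid\gamma)$ can be recollected, via rule (v) applied in reverse, into $(u\mid\alpha\gamma)$ up to a shuffle sign. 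This rewrites the left-hand side as a single sum indexed by ordered quadruples $(\alpha,\beta,\gamma,\varepsilon)$ of subsequences of $1\cdots n$, weighted by products $(u\mid\alpha\gamma)(v\mid\beta)(w\mid\varepsilon)$ with an explicit sign depending only on $\alpha,\beta,\gamma,\varepsilon$.

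The same procedure applied to the right-hand side, this time splitting $v$ instead of $u$, produces an identical family of terms but with the factors $(u\mid\alpha\gamma)$ and $(v\mid\beta)$ in swapped positions inside the product in $\mathrm{Ext}([L\mid P])$. Since each letter-place generator has odd degree, reordering a product of $|u|$ generators past a product of $n-|u|$ generators contributes a sign $(-1)^{|u|(n-|u|)}$. Combined with the shuffle-sign contributions, which appear symmetrically on both sides, and the length relation $|u|+|v|+|w|=2n$, the parities collapse to the overall prefactor $(-1)^{n-|w|}$ asserted in the lemma.

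The principal obstacle is the sign bookkeeping. Signs enter from three independent sources --- the signed coproduct factors $(-1)^{\delta}$ coming out of rule (v), the graded anticommutativity in $\mathrm{Ext}([L\mid P])$ when re-bracketing products of biproduct values, and the parities of the various word lengths under the constraint $|u|+|v|+|w|=2n$. The non-trivial task is to verify that these three sources combine into exactly $(-1)^{n-|w|}$, with no residual dependence on $|u|$ or $|v|$. Once this is checked the identity follows term-by-term from the common expansion; the structural content of the proof, by contrast, is essentially the associativity (``Hopf-duality'') implicit in the biproduct axioms.
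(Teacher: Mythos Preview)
The paper does not give its own proof of this lemma; it is quoted directly from \cite{GRS} with a page reference and used as a black box throughout. So there is nothing to compare your argument against except the standard letterplace-algebra proof in the literature, which your outline does follow in spirit.

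Your strategy --- expand both brackets via the biproduct axioms, regroup the letter factors, and match signs --- is the right one, but two points deserve care. First, a labeling slip: the recollection step $\sum_{u_1u_2=u}(u_1\mid\alpha)(u_2\mid\gamma)=(u\mid\alpha\gamma)$ is rule (iv), not rule (v); rule (v) splits the \emph{place} word with a sign, whereas rule (iv) splits the \emph{letter} word without one. Keeping the two straight is exactly what determines which shuffle signs appear and which do not. Second, before you can invoke rule (iv) in reverse, the factors $(u_1\mid\alpha)$ and $(u_2\mid\gamma)$ sit on either side of $(v\mid\beta)$ in the product $[u_1v][u_2w]$, so you must commute $(v\mid\beta)$ past $(u_2\mid\gamma)$ in $\mathrm{Ext}([L\mid P])$; this introduces a sign $(-1)^{|v|\cdot|u_2|}$ that depends on the particular splitting $u_1u_2$, not just on global lengths, and has to be absorbed correctly into the combinatorics. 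You flag the sign bookkeeping as the principal obstacle but do not actually carry it through, and this commutation sign is precisely where a careless argument would break. The structural content (coassociativity of the letter coproduct, biproduct compatibility) is indeed what makes the identity hold, but the lemma lives or dies on the parity computation, so a complete proof must exhibit it.
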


\begin{proposition}\label{prop:allid}
All identities among bracket polynomials can be deduced from the identity of Lemma~\ref{le:exch} with $|u|=2$, $|w|=n-1$.
\end{proposition}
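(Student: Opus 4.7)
The plan is to invoke the standard straightening algorithm for bracket algebras (due to Doubilet--Rota--Stein in the symmetric case, extended by Grosshans--Rota--Stein to the superalgebra setting used here) and to verify that the straightening steps can all be carried out using only exchange identities of the restricted form $|u|=2$, $|w|=n-1$. Concretely, to every bracket monomial I would associate a Young-tableau-like array whose columns are the brackets (each of length $n$), and I would impose a total order on such arrays by comparing them lexicographically (e.g.\ row by row, after ordering the letters within each column via the antisymmetry rules of Rule (v) of the biproduct). Call a bracket monomial \emph{standard} if its array has weakly increasing rows.

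Step 1 is to show that every bracket polynomial reduces, modulo the $|u|=2$, $|w|=n-1$ exchanges, to a $\CC$-linear combination of standard bracket monomials. Specialising the Exchange Lemma with $|u|=2$, $|w|=n-1$ (so $|v|=n-1$ as well, by the constraint $|u|+|v|+|w|=2n$) yields precisely the \emph{two-column} straightening identity: it expresses a pair of brackets sharing an "inversion" between two chosen letters as a signed sum of pairs of brackets that are strictly smaller in the chosen order. Given a non-standard monomial, I would locate any two adjacent columns with a row-violation, take $u$ to be the two entries involved in the violation and $w$ the remaining $n-1$ entries of one column, and apply the exchange to rewrite the monomial as a sum of strictly smaller monomials. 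Well-foundedness of the order on multisets of tableaux guarantees termination.

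Step 2 is to show that the resulting standard bracket monomials are linearly independent. This is the content of the standard basis theorem in the superalgebra setting of~\cite{GRS} and follows from the First Fundamental Theorem already recorded there: the map from bracket polynomials to invariants in $\CC[W]$ sends standard bracket monomials bijectively onto a basis of the target, so no non-trivial linear combination of standard monomials vanishes. Combining the two steps settles the proposition: if $P$ is an identity, i.e.\ $P = 0$ in $\mathrm{Br}(L)$, then Step 1 produces, using only exchanges of the restricted form, a representation of $P$ as a sum of standard monomials; by Step 2 that sum must be identically zero, so $P$ lies in the ideal generated by the restricted exchange identities.

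The main obstacle is the verification underlying Step 1: one must show that the \emph{quadratic} exchange identities alone suffice, i.e.\ that every non-standard bracket monomial admits an $|u|=2$, $|w|=n-1$ exchange that strictly decreases the chosen order. This requires a careful choice of the order (so that the "top" violation in the tableau can always be targeted), and delicate bookkeeping of the signs $(-1)^{\delta(w_1,w_2)}$ from Rule (v) of the biproduct and the overall sign $(-1)^{n-|w|}$ in the Exchange Lemma to confirm that the rewrite is a genuine reduction and not merely a permutation of equal-ranked monomials. Once this combinatorial termination is pinned down, the rest of the argument is formal.
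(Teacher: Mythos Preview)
Your plan is correct in spirit and is essentially what sits behind the paper's citation, but the paper structures the argument differently and much more economically. The paper proceeds in two steps: first it invokes Theorem~8 of \cite{GRS}, which (via the straightening machinery you describe) already shows that all identities among bracket polynomials follow from the \emph{general} Exchange Lemma, with no constraint on $|u|$ or $|w|$; second, it observes that every instance of the Exchange Lemma with arbitrary $|u|$ can be obtained from repeated applications of the $|u|=2$ case---you simply move the letters of $u$ across one at a time---and dismisses this reduction as ``clear''.

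So the difference is this: you attempt straightening \emph{directly} with only the quadratic exchanges, which forces you to confront the termination and sign issues you correctly flag as the main obstacle. The paper sidesteps that work by appealing to the already-established straightening theorem in \cite{GRS} (which is free to use the full-strength exchange) and then doing the comparatively easy reduction of the general exchange identity to a chain of $|u|=2$ exchanges. Your route is more self-contained and would yield a genuinely independent proof if you carry out Step~1 in full; the paper's route is shorter because the hard part is outsourced to the literature. Both arrive at the same place.
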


\begin{proof}
The fact that all identities can be deduced from Lemma~\ref{le:exch} follows directly from Theorem~8 of~\cite{GRS}. Thus - as was stated in~\cite{GRS} on page $\mathrm{xv}$ - it can be used for an abstract definition of (skew) brackets. The fact that all those identities stem from the ones with ${\rm Length}(u)=2$ is clear.
\end{proof}

\begin{remark}
In Proposition~\ref{prop:allid}, one can replace '$|u|=2$, $|w|=n-1$' with '$|u|=n+1$, $|w|=0$'.
\end{remark}

Finally, we bring together brackets and invariants of the action of $\SL_n$ on $W$ by the following linear map.

\begin{definition}\label{def:umbralop}
Let the linear \emph{umbral operator} 
\begin{align*}
U: {\rm Ext}([L|P]) \to & \CC[W] \\
f \mapsto & \left\langle U,f \right\rangle
\end{align*}
be defined by the following:
\begin{align*}
{\rm (i)~} & \left\langle U, \left( a_{i,j,k}^{(i)} | \iota_1\ldots \iota_i \right) \right\rangle = T_{i,j,\iota_1,\ldots,\iota_{i}},  \\
{\rm (ii)~} & \left\langle U, \left( a_{i,j,k}^{(l)} | \iota_1\ldots \iota_l \right) \right\rangle = 0 & {\rm if~ } l \neq i, \\
{\rm (iii)~} & \left\langle U, \prod_{i,j,k}  \left( a_{i,j,k}^{\left(l_{i,j,k}\right)} | \iota_1\ldots \iota_{l_{i,j,k}} \right) \right\rangle = \prod_{i,j,k} \left\langle U,  \left( a_{i,j,k}^{\left(l_{i,j,k}\right)} | \iota_1\ldots \iota_{l_{i,j,k}} \right) \right\rangle,
\end{align*}
where in (iii), the order of the letterplaces in the word $\prod_{i,j,k}  ( a_{i,j,k}^{(l_{i,j,k})} | \iota_1\ldots \iota_{l_{i,j,k}} )$ must be according to the order of the letters $a_{i,j,k}$.
\end{definition}

\begin{example}
For arbitrary $n$ and any permutation $\sigma \in S_n$, we have
$$
\left\langle U, \left[ a_{1,1,1} \cdots a_{1,n,1} \right] \right\rangle =
\left\langle U, \left[ a_{1,\sigma(1),1} \cdots a_{1,\sigma(n),1} \right] \right\rangle =
 \left| \begin{array}{ccc}
T_{1,1,1} & \cdots & T_{1,n,1} \\
\vdots & \ddots & \vdots \\
T_{1,1,n} & \cdots & T_{1,n,n}
\end{array}
\right|
$$
as $a_{1,1,1} < \ldots < a_{1,n,1}$.
\end{example}

\begin{theorem}[\cite{GRS}, Thm. 18]
The umbral operator $U:{\rm Ext}([L|P]) \to  \CC[W] $ is surjective and its restriction to the bracket polynomials ${\rm Br}(L)$ is onto $\CC[W]^{\SL_n}$.
\end{theorem}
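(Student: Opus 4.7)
The theorem contains two claims, and my plan is to split accordingly. The first claim, surjectivity of $U$ onto $\CC[W]$, is essentially bookkeeping: rule~(i) of Definition~\ref{def:umbralop} exhibits each coordinate function $T_{i,j,\iota_1\cdots\iota_i}$ as the image of the letterplace $(a_{i,j,k}^{(i)}|\iota_1\cdots\iota_i)$, rule~(iii) upgrades this to arbitrary monomials in the coordinates, and linearity closes the argument.

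For the second claim I would first establish the easy inclusion $U({\rm Br}(L))\subseteq\CC[W]^{\SL_n}$. The idea is to interpret a bracket $[a_{i_1,j_1,k_1}^{(l_1)}\cdots a_{i_r,j_r,k_r}^{(l_r)}]$ with $\sum l_s=n$ as a complete tensor contraction: $U$ sends it to the polynomial on $W$ obtained by contracting the antisymmetric tensors $t_{i_s,j_s}\in\BigWedge^{i_s}V$ against the volume form $\det\in\BigWedge^n V^*$. Since $\det$ is $\SL_n$-invariant and the contraction is $\SL_n$-equivariant, the resulting polynomial is invariant, and rule~(iii) together with linearity extend this to all of ${\rm Br}(L)$.

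The substantive step is the reverse inclusion, which is the First Fundamental Theorem for $\SL_n$ on direct sums of fundamental representations. My approach would be to reduce to Weyl's classical FFT for $\SL_n$ on $V^{\oplus N}$, according to which every invariant is a polynomial in the $n\times n$ determinants of $n$-tuples of column vectors. Using the equivariant wedge surjections $V^{\otimes i}\twoheadrightarrow\BigWedge^i V$ I would pull back an arbitrary invariant $f\in\CC[W]^{\SL_n}$ to an invariant $\widetilde f$ on a larger sum of copies of $V$, express $\widetilde f$ as a determinantal bracket polynomial in simple letters by Weyl, and finally descend by grouping the $i$ letters coupled antisymmetrically into a single divided-power letter $a_{i,j,k}^{(i)}$. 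This last step is exactly encoded by the identity $a^{(i)}a^{(j)}=\binom{i+j}{j}a^{(i+j)}$ in ${\rm Div}(L)$.

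The main obstacle is precisely this descent step: one must check that the grouping is compatible with the relations among bracket polynomials, i.e.\ that the resulting element of ${\rm Br}(L)$ is well-defined modulo the Plücker-type identities of Lemma~\ref{le:exch} and that the construction hits every invariant in each multidegree. This is the content of the superalgebra formalism of GRS, and in practice I would invoke their Theorem~18 directly rather than rederive the delicate symmetrization bookkeeping; an independent check could be carried out by a Hilbert-series argument comparing each multidegree component of $\CC[W]^{\SL_n}$ with the span of the corresponding bracket monomials, using Schur--Weyl duality to match both sides.
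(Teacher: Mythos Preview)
The paper does not prove this theorem: it is quoted verbatim from \cite{GRS} as Theorem~18 and used as a black box, with no argument supplied. Your proposal therefore cannot really be compared to ``the paper's own proof,'' since there is none; your sketch is a reasonable outline of why the result is plausible (surjectivity onto $\CC[W]$ from rule~(i), invariance of bracket images from the $\det$ contraction, and reduction of the FFT for $\bigoplus(\BigWedge^iV)^{n_i}$ to Weyl's FFT for $V^{\oplus N}$ via the wedge surjections), and you yourself note that the descent step is exactly what the GRS superalgebra machinery handles and that you would invoke their Theorem~18 directly---which is precisely what the paper does.
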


We are only interested in bracket polynomials that are not in the kernel of $U$. Thus in the following, we consider the subalgebra ${\rm Bra(L)}$ of \emph{appropriate} bracket polynomials, where if a letter $a_{i,j,k}$ turns up in an appropriate bracket monomial, it does so exactly $i$ times. Of course, the restriction of $U$ to  ${\rm Bra(L)}$ is still onto $\CC[W]^{\SL_n}$.

\section{Brackets and graphs}\label{sec:bragra}

In this section, we develop the basis of our method:   bracket polynomials are associated with formal sums of colored  hypergraphs.

\begin{definition}
Let $X$ be a set. Then we denote by $\MMM(X)$ the set of nonempty multisets composed of elements of $X$.
\end{definition}

\begin{definition}
Let $m$ be a positive integer.
An undirected $n$-regular \emph{colored hypergraph} $\Gamma$ with $m$ vertices  is a pair $\Gamma=(\Ver,\Edg)$, where $\Ver=\{v_1 < \ldots <v_m\}$ is the ordered set of \emph{vertices} and $\Edg \in \mathcal{P}\left(\mathcal{M}(\Ver) \times \NN \times \NN\right)$ is the set of colored \emph{hyperedges} $e=(e_1,e_2,e_3)$, and for all vertices $v$, we have 
$$
\sum\limits_{e \in E} \#_{v}\left( e_1 \right) = n,
$$  
where $\#_{v}\left( e_1 \right)$ is the \emph{number of connections} of $e$ to $v$, i.e. the number of occurences of the element $v$ in the multiset $e_1$. 
By the \emph{virtual degree} of a vertex $v$, we mean the number 
$$
{\rm vdeg}(v)=n-\sum\limits_{(\{v,\ldots,v\},e_2,e_3) \in E} \left| \{v,\ldots,v\} \right|.
$$
By the \emph{effective graph} $\Gamma_\eff$ of $\Gamma=(\Ver,\Edg)$, we denote the subgraph 
$$
\Gamma_\eff=(\Ver,\Edg \setminus \{ e | e_1=\{v,\ldots,v \}, v \in \Ver\}).
$$ 
If $e=(e_1,e_2,e_3)$ is a hyperedge, then we call  $k=|e_1|$ the \emph{size}, $e_2 \in \NN$ the \emph{color} and $e_3 \in \NN$ the \emph{shading} of $e$. We call $e$ a $k$-edge. We say that $e$ is connected to $v$, if $v \in e_1$. If $e$ is connected to only one $v$, then we call it a \emph{looping edge}.
\end{definition}

Observe that multiple edges and loops are allowed in this definition of a hypergraph, so it is truly a \emph{pseudo-hypergraph}.

\begin{definition}
Now let $G$ be the $\CC$-vector space of formal sums of $\CC$-multiples of $n$-regular colored hypergraphs. On $G$, we define a (non-abelian) multiplication as follows. For $\Gamma_1=(\{v_1 < \ldots <v_m\},\Edg_1)$ and $\Gamma_2=(\{w_1 < \ldots <w_m\},\Edg_2)$ in $G$, we set
$$
\Gamma_1 \Gamma_2 := \Gamma_1 \cdot \Gamma_2 := (\{v_1 < \ldots <v_m<w_1 < \ldots <w_m\},\Edg_1 \cup \Edg_2)
$$
and extend to formal sums of graphs in the obvious way.
This makes $G$ a $\CC$-algebra. We call elements $\Upsilon=\sum a_i \Gamma_i \in G$ \emph{graphsums}.
\end{definition}

To a hypergraph $\Gamma=(\{v_1 < \ldots <v_m\},\Edg_1)$, we associate a bracket monomial $p_\Gamma=b_1 \cdots b_m$ with brackets $b_1,\ldots, b_m$ defined by:
$$
b_i :=\left[\prod\limits_{e \in \Edg} a_{|e_1|,e_2,e_3}^{(\#_{v_i}(e_1))}\right].
$$
\noindent
This gives a linear surjective map $\gamma: G \to {\rm Bra(L)}$ by setting
$$
\gamma: \sum a_i \Gamma_i \mapsto \sum a_i p_{\Gamma_i}.
$$
\noindent
Now we set $\mathcal{G}:=G / {\rm ker}(\gamma)$ and by $\gamma': \mathcal{G} \to {\rm Bra}(L)$ denote the induced isomorphism.

\begin{example} \label{ex:grbr}
We associate the bracket monomial
$$
\brB{a_{3,1,1}^{(3)}a_{3,4,1}}\brB{a_{2,2,3}^{(2)}a_{3,4,1}a_{2,1,1}}\brB{a_{3,4,1}a_{2,1,1}a_{1,7,1}a_{1,2,5}}
$$
\noindent
to the color-and-shading-labeled hypergraph
\begin{center}
\begin{tikzpicture}[baseline=(A),outer sep=0pt,inner sep=0pt]

\node (A) at (0,0) {};

\node[font=\tiny,align=center] at (0,-0.5) {$4,1$};

\draw[line width=0.5pt, draw=black] (0,0)-- (0,-0.38);

\draw[black, fill=black] (0,0) circle [radius=1.5pt];

\draw[black, fill=black] (0.7,0) circle [radius=1.5pt];

\draw[black, fill=black] (-0.7,0) circle [radius=1.5pt];

\thre{-0.7}{}
\node[font=\tiny,align=center] at (-0.7,0.5) {$1,1$};

\twoe{0}{}
\node[font=\tiny,align=center] at (0,0.5) {$2,3$};

\draw[line width=0.5pt, draw=black] (0,0)-- (1.08,0);
\draw[line width=0.5pt, draw=black] (0.7,0)-- (0.7,0.38);

\node[font=\tiny,align=center] at (0.4,0.1) {$1,1$};
\node[font=\tiny,align=center] at (0.7,0.5) {$7,1$};
\node[font=\tiny,align=center] at (1.3,0) {$2,5$};

\draw[line width=0.5pt, draw=black]  (0.7,0) .. controls (0.6,-0.5) and (-0.6,-0.5) .. (-0.7,0) ;

\end{tikzpicture}
\gES{.}
\end{center}
\end{example}

\begin{convention}
We will often speak only of \emph{graphs}, when we mean colored hypergraphs. Moreover, we will not number vertices of graphs, but will assume that they are ordered ascending from left to right. We also ignore shading of edges, assuming that all $k$-edges of the same color have different shading.
\end{convention}

\begin{definition}\label{def:red}
We say that two graphsums $\Upsilon_1$ and $\Upsilon_2$ are \emph{equivalent}, writing $\Upsilon_1\simeq\Upsilon_2$ , if  $\Upsilon_1 - \Upsilon_2 \in \ker(U \circ \gamma)$. We call a graphsum $\Upsilon$ \emph{reducible}, if it is equivalent to zero or to some $\sum a_i \Gamma_i$ with all $\Gamma_i$ disconnected. A graphsum is irreducible if it is not reducible. 

Several graphsums $\Upsilon_1,\ldots,\Upsilon_N$ are called \emph{reducibly independent}, if the only reducible linear combination $\sum a_i\Upsilon_i$ is the trivial one. If for two graphsums $\Upsilon_1$, $\Upsilon_2$ the linear combination $\Upsilon_1-\Upsilon_2$ is reducible, we call them \emph{reducibly equivalent} and write $\Upsilon_1\simeq_r\Upsilon_2$. We say that a set of reducibly independent  irreducible graphs has property $(RI)$.
\end{definition}

\begin{remark}
The Exchange Lemma~\ref{le:exch} leads to equivalencies between graphsums.
If two graphsums are equivalent in this way, for any pair of graphs occuring in the two graphsums, there is a color-preserving one-to-one-correspondence between the edges.
\end{remark}

\begin{theorem}
Let $M$ be a maximal set with property $(RI)$. Then $M$ is in one-to-one-correspondence to a minimal generating set of $\CC[W]^{\SL_n}$ by
$
\Gamma \mapsto U \circ \gamma (\Gamma)
$.
\end{theorem}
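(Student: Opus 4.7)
The plan is to split the claim into three blocks: injectivity of $\Gamma\mapsto U\circ\gamma(\Gamma)$ on $M$, that the image generates $\CC[W]^{\SL_n}$ as a $\CC$-algebra, and that no element of the image can be omitted. Throughout I rely on two structural facts. First, surjectivity of $U\circ\gamma\colon G\to\CC[W]^{\SL_n}$ from the earlier theorem on the umbral operator. Second, the multiplicativity $U\circ\gamma(\Gamma_1\cdot\Gamma_2)=U\circ\gamma(\Gamma_1)\cdot U\circ\gamma(\Gamma_2)$ that is built into Definition~\ref{def:umbralop}(iii), via which a disconnected graph is identified with a product of invariants. Injectivity on $M$ is then immediate: if $\Gamma_1,\Gamma_2\in M$ share the same image, then $\Gamma_1-\Gamma_2\in\ker(U\circ\gamma)$, so $\Gamma_1-\Gamma_2\simeq 0$ is a reducible combination of elements of $M$, and property $(RI)$ forces $\Gamma_1=\Gamma_2$.

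For generation I would induct on polynomial degree (equivalently, on a suitable vertex- or edge-count preserved by $U\circ\gamma$). Given $f\in\CC[W]^{\SL_n}$, surjectivity provides $\Upsilon=\sum a_i\Gamma_i$ with $f=U\circ\gamma(\Upsilon)$, and each $\Gamma_i$ is treated on its own. When $\Gamma_i$ is reducible, it is equivalent to a sum of disconnected graphs $\Gamma_{i,1}\cdot\Gamma_{i,2}$ with both factors of strictly smaller complexity, so multiplicativity presents $U\circ\gamma(\Gamma_i)$ as a sum of products that fall under the induction hypothesis. When $\Gamma_i$ is irreducible and not in $M$, maximality of $M$ forces $M\cup\{\Gamma_i\}$ to fail $(RI)$, producing a nontrivial reducible combination $c\Gamma_i+\sum c_k\Gamma_k$ with $\Gamma_k\in M$. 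The coefficient $c$ must be nonzero, since otherwise $(RI)$ of $M$ itself would be violated; dividing by $c$ exhibits $\Gamma_i$ as reducibly equivalent to a $\CC$-linear combination of elements of $M$, and the reducible remainder is absorbed by a second application of the induction hypothesis.

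For minimality I would assume toward a contradiction that some $\Gamma_0\in M$ has $U\circ\gamma(\Gamma_0)$ expressible as a polynomial in $\{U\circ\gamma(\Gamma'):\Gamma'\in M\setminus\{\Gamma_0\}\}$. Passing to the graded component matching the degree of $U\circ\gamma(\Gamma_0)$, split the relation into its linear part $\sum a_k U\circ\gamma(\Gamma_k)$ and its part of degree at least two. Every higher-degree monomial $\prod_j U\circ\gamma(\Gamma_{k,j})$ equals $U\circ\gamma\bigl(\prod_j \Gamma_{k,j}\bigr)$ by multiplicativity, hence corresponds to a disconnected graph. Pulling back modulo $\ker(U\circ\gamma)$ yields $\Gamma_0-\sum a_k\Gamma_k\simeq(\text{sum of disconnected graphs})$, so $\Gamma_0-\sum a_k\Gamma_k$ is reducible. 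As the coefficient of $\Gamma_0$ is $1$, this is a nontrivial reducible combination of elements of $M$, contradicting $(RI)$.

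The main obstacle I anticipate is making the multiplicativity step fully rigorous: the graph product implicitly requires fresh shadings and distinct colors so that Definition~\ref{def:umbralop}(iii) applies without collapsing, and one must check that this relabelling respects the equivalence $\simeq$. Secondarily, pinning down a well-founded complexity measure that is preserved by $\simeq$ and strictly decreased when a disconnected graph is decomposed into its components requires the bookkeeping developed in Section~\ref{sec:brainv}. Neither difficulty appears to be a genuine obstruction once the structural results recalled there are in place.
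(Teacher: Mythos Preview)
Your proposal is correct and follows essentially the same approach as the paper: surjectivity of $U\circ\gamma$ plus a descent on vertex count for generation, and property $(RI)$ for minimality. Your write-up is in fact more thorough than the paper's---you make the injectivity of $\Gamma\mapsto U\circ\gamma(\Gamma)$ on $M$ explicit and you spell out the minimality argument (which the paper dismisses in one sentence), and the technical obstacles you flag about multiplicativity and the complexity measure are genuine bookkeeping points but, as you say, not real obstructions.
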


\begin{proof}
Let $M$ be a maximal set of reducibly independent irreducible graphs in $G$. Let $F:=U \circ \gamma(M)$. Since $U\circ \gamma$ is surjective, for any element $f$ of $\CC[W]^{\SL_n}$, we have a graphsum $\Gamma$ with $U\circ \gamma (\Gamma) =f$. If $\Gamma$ is irreducible, then either $\Gamma \in M$ and thus $f \in F$, or if $\Gamma \notin M$, due to maximality  of $M$, there is a reducible nontrivial linear combination
$$
\Gamma + \sum_{\Gamma' \in M} a_{\Gamma'} \Gamma'.
$$
Due to linearity of $U \circ \gamma$, we can proceed with reducible $\Gamma$. Either $\Gamma=0$, then $f=0$, or $\Gamma$ is equivalent to a graphsum of disconnected graphs. We can assume all connected  subgraphs are irreducible and thus proceed with an irreducible graph, where the number of vertices is strictly less than that of $\Gamma$. Since the number of vertices of graphs is bounded from below, this procedure comes to an end. So $F$ generates $\CC[W]^{\SL_n}$. The minimality of $F$ follows immediately from $M$ being reducibly independent.
\end{proof}

\begin{lemma}\label{le:redvirtdeg}
Let the graph $\Gamma=(\Ver,\Edg)$ have a hyperedge $e'=(e_1',e_2',e_3')$ with $v \in e_1'$. Then $\Gamma$ is equivalent to a graphsum $\sum_i (\Ver,\Edg_i)$ where $\Edg_i \setminus \{e|v \in e_1\} \subseteq \Edg \setminus \{e|v \in e_1\}$ and $(\{v,...,v\},e_2',e_3') \in \Edg_i$ for all $i$.
\end{lemma}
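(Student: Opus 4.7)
The plan is to push the copies of the letter $a:=a_{|e_1'|,e_2',e_3'}$ associated to $e'$ into the bracket $b_v$ corresponding to $v$, via iterated applications of the Pl\"ucker relation $\sum_{(u_1,u_2)\vdash(u)}[u_1][u_2w]=0$ (with $|u|=n+1$, $|w|=n-1$) from the introduction, until $b_v$ contains all $\ell:=|e_1'|$ copies of $a$; the bracket monomial for the resulting graphs then carries the looping edge $(\{v,\ldots,v\},e_2',e_3')$ demanded by the statement. Setting $k:=\#_v(e_1')$ and writing $b_v=[a^{(k)}w_v]$, I would fix a vertex $v'\neq v$ with $a\in b_{v'}$, write $b_{v'}=[aw_{v'}]$ (assuming for simplicity $\#_{v'}(e_1')=1$; the general case is analogous after pulling out a single copy of $a$), and apply the Pl\"ucker identity to $u=a^{(k+1)}w_v$ and $w=w_{v'}$.

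Among the partitions of $u$ into a length-$n$ subword $u_1$ and a length-one subword $u_2$, the choice $u_2=a$ reproduces $[a^{(k)}w_v][aw_{v'}]=b_vb_{v'}$, while every remaining partition has $u_2$ equal to some letter $c$ of $w_v$ and contributes a term $[a^{(k+1)}(w_v\setminus c)][cw_{v'}]$, whose bracket at $v$ now carries $a^{(k+1)}$. Solving the relation for $b_vb_{v'}$ therefore rewrites it as a linear combination of bracket products in which one additional copy of $a$ has been transferred into the $v$-bracket, at the cost of moving one letter $c$ out of $b_v$ into $b_{v'}$. I would induct on $\ell-k$, the base case $\ell=k$ being trivial (with $\Gamma_1=\Gamma$, since $e'$ is already a loop at $v$), and iterate the step above on each summand until the bracket at $v$ carries $a^{(\ell)}$, so that every resulting graph $(\Ver,\Edg_i)$ contains the desired looping edge. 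Passing everything through $U\circ\gamma$ yields the required equivalence $\Gamma\simeq\sum_i(\Ver,\Edg_i)$.

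The main technical point is the inclusion $\Edg_i\setminus\{e\mid v\in e_1\}\subseteq\Edg\setminus\{e\mid v\in e_1\}$. Edges of $\Gamma$ disjoint from $\{v,v'\}$ are visibly untouched by the identity above, so only the edge whose letter $c$ is moved from $b_v$ to $b_{v'}$ needs to be tracked: if $c$ has multiplicity at least two at $v$ in $\Gamma$, then in every summand its incidence multiset still contains $v$, and the edge lies in the $v$-stratum and is discarded from both sides of the inclusion; the delicate case $\#_v(c)=1$, where the modified edge leaves the $v$-stratum, must be controlled so that no triple outside $\Edg\setminus\{e\mid v\in e_1\}$ is ever produced. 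I would manage this by exploiting the freedom in choosing $v'$ and in picking which letter is pulled out of $u$ at each Pl\"ucker step, always preferring a letter $c\in w_v$ with $\#_v(c)\geq 2$ when such a letter exists, and reducing to a simpler configuration (fewer edges at $v$) before continuing otherwise. This bookkeeping is the real work; the inductive scaffolding around it is straightforward.
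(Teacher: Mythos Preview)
Your core idea—pushing copies of $a=a_{|e_1'|,e_2',e_3'}$ into $b_v$ via an exchange identity and inducting on $\ell-k$—is exactly the paper's approach. The paper's execution is a bit more direct: instead of the length-$(n{+}1)$ Pl\"ucker relation with $u=a^{(k+1)}w_v$, it applies the Exchange Lemma with $u=a^{(\kappa+\nu)}$, $v=w_1$, $w=w_2$, where $\nu=\#_{v'}(e_1')$ is not assumed to be~$1$. Because $u$ is a divided power of a single letter, the only splitting compatible with the bracket lengths on the left-hand side is $u_1=a^{(\kappa)}$, $u_2=a^{(\nu)}$, so the left side reduces to the single term $[a^{(\kappa)}w_1][a^{(\nu)}w_2]$, while every summand on the right already carries $a^{(\kappa+\nu)}$ in the $v$-bracket. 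One exchange thus raises $\#_v(e_1')$ from $\kappa$ to $\kappa+\nu$ in a single step, and the iteration runs over the remaining vertices still carrying $a$ rather than copy by copy.

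Your third paragraph, however, contains a genuine misstep. There is no ``freedom in picking which letter is pulled out of $u$'' at a Pl\"ucker step: the relation is a sum over \emph{all} length-one subwords $u_2$, so every letter $c\in w_v$ contributes a summand regardless of whether $\#_v(c)\geq2$. The delicate case cannot be dodged by preference, and your proposed bookkeeping therefore does not get off the ground. The paper makes no such attempt; it simply notes that on the right-hand side of the exchange only letters from $w_1$ (hence already incident to $v$ in $\Gamma$) are redistributed between the $v$- and $v'$-brackets while all brackets in $\mathfrak{m}$ stay literally unchanged, and reads the inclusion off from this without further argument.
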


\begin{proof}
We can assume that $v$ is the smallest element of $\Ver$. If $\#_v(e_1')=|e_1'|$, we are done. Thus take $\#_v(e_1')=\kappa<|e_1'|$. We can assume that for the second smallest element $v'$ of $\Ver$ we have $\#_v'(e_1')=\nu \geq 1$. So
$$
\gamma(\Gamma)=\brB{a^{(\kappa)}_{|e_1'|,e_2',e_3'}w_1}\brB{a^{(\nu)}_{|e_1'|,e_2',e_3'}w_2}\mathfrak{m}
$$ 
for some $w_1,w_2 \in L$ and $\mathfrak{m}$ a bracket monomial. Now applying Lemma~\ref{le:exch} with $u=a^{(\kappa+\nu)}_{|e_1'|,e_2',e_3'}$, we see that $\Gamma$ is equivalent to a graphsum $\sum_i (\Ver,\Edg_i)$ where $\Edg_i \setminus \{e|v \in e_1\} \subseteq \Edg \setminus \{e|v \in e_1\}$ and $(e_1'',e_2',e_3') \in \Edg_i$ with $\#_v(e_1'')=\kappa+\nu$. Iterating this gives the desired result.
\end{proof}

\begin{remark}
When searching for a maximal set of reducibly independent irreducible graphs, Lemma~\ref{le:redvirtdeg} can be used to simplify the respective effective graphs.
\end{remark}

\begin{proposition}\label{prop:maxcolors}
Let $\Gamma$ be an irreducible graph with arbitrary coloring which is not reducibly equivalent to $c\Upsilon_{\det(J)}$ for any $J\subseteq \{1,\ldots,n_k\}$ and $c \in \CC$. Let $M$ be the set of graphs of the same form as $\Gamma$ but with $k$-edges of only $\tbinom{n}{k}-1$ different colors.
Then at least one element of $M$ is irreducible.
Moreover, if $n_k \geq \tbinom{n}{k}$ and $J \subseteq \{1,\ldots,n_k\}$ has cardinality $\tbinom{n}{k}$, then $\det:\oplus_{j \in J} V_{k,j}\to \CC$ equals  $U \circ \gamma(\Upsilon_{\det(J)})$ for some irreducible graphsum $\Upsilon_{\det(J)}$.
\end{proposition}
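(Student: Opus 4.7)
The plan is to handle the two assertions of the proposition in turn.

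\textbf{Construction and irreducibility of $\Upsilon_{\det(J)}$.} Fix $J\subseteq \{1,\ldots,n_k\}$ with $|J| = N := \binom{n}{k}$. The polynomial
$$
\det : \bigoplus_{j\in J} V_{k,j} \longrightarrow \CC,
$$
sending an $N$-tuple of $k$-vectors to the determinant of the resulting $N\times N$ matrix of Plücker coordinates, is manifestly $\SL_n$-invariant. By the umbral operator theorem (\cite[Thm.~18]{GRS}) there exists a graphsum $\Upsilon_{\det(J)}\in\mathcal{G}$ with $U\circ\gamma(\Upsilon_{\det(J)}) = \det$. For irreducibility, suppose $\Upsilon_{\det(J)}\simeq_r\sum_i a_i\Gamma_i$ with the $\Gamma_i$ disconnected. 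Then $\det = \sum_i a_i f_ig_i$ with each pair $(f_i,g_i)$ being $\SL_n$-invariants supported on disjoint subsets of $\{V_{k,j}\}_{j\in J}$. Matching multi-degrees (each variable in $\det$ appears with degree exactly one) forces a disjoint decomposition $J = J_1^{(i)}\sqcup J_2^{(i)}$ for each $i$, with both blocks of cardinality at least two (since there are no nonzero linear $\SL_n$-invariants on $\bigwedge\nolimits^kV$ for $k\leq n-1$). An explicit evaluation at basis-valued inputs $y_j = e_{\iota(j)}$ for injective assignments $\iota$ from $J$ to $k$-subsets of $\{1,\ldots,n\}$ then rules out any such factorization: $\det$ evaluates to $\pm 1$, whereas the possible pairings and higher contractions assembling the $f_ig_i$ cannot match this pattern simultaneously for all choices of $\iota$. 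Hence $\Upsilon_{\det(J)}$ is irreducible.

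\textbf{The main reduction.} Let $p$ denote the number of distinct $k$-edge colors of $\Gamma$. If $p\leq N-1$, a harmless relabeling places $\Gamma$ itself into $M$ and the irreducibility hypothesis on $\Gamma$ concludes the argument. Assume $p\geq N$, and suppose for contradiction that every merging of $\Gamma$'s $k$-edge colors down to $N-1$ distinct ones yields a reducible graphsum. On the invariant side, merging colors $c_i$ and $c_j$ corresponds to the specialization $y_{c_j}:=y_{c_i}$ in the polynomial $f:=U\circ\gamma(\Gamma)$.

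Decompose $f$ according to the $S_p$-action permuting the $p$ color variables. The antisymmetric (sign representation) isotypic component $f^{\mathrm{as}}$ is a multilinear alternating function on $p$ copies of the $N$-dimensional space $\bigwedge\nolimits^kV$; it vanishes when $p>N$ and is a scalar multiple of the determinant when $p=N$. Every non-sign isotypic component of $f$ survives nontrivially under at least one diagonal specialization $y_{c_i}=y_{c_j}$. Combining the reducibility hypothesis on all such specializations with the Plücker-type syzygies provided by Lemma~\ref{le:exch} (which, applied iteratively, translate the linear dependence of $N+1$ vectors in $\bigwedge\nolimits^kV$ into identities of bracket polynomials), one deduces that these non-sign components of $f$ are reducibly equivalent to zero. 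Therefore $\Gamma\simeq_r c\Upsilon_{\det(J)}$ when $p=N$ (with $J$ the color set of $\Gamma$) and $\Gamma\simeq_r 0$ when $p>N$, both of which contradict the hypotheses on $\Gamma$.

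\textbf{Main obstacle.} The delicate step is to make rigorous, at the level of graphsums rather than merely at the level of invariant polynomials, the claim that the non-antisymmetric $S_p$-isotypic components of $f$ are reducibly equivalent to zero under the specialization hypothesis. I expect this will require iterated application of the exchange identity (Lemma~\ref{le:exch}) to the brackets of $p_\Gamma$ containing the letters associated to the relevant $k$-edge colors, together with careful bookkeeping of the signs introduced by the divided powers algebra ${\rm Div}(L)$.
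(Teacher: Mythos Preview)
Your proposal has a genuine gap, and you essentially flag it yourself in the ``Main obstacle'' paragraph. The crucial step --- passing from ``every colour-merging specialisation of $f$ yields a reducible graphsum'' to ``the non-sign $S_p$-isotypic pieces of $\Gamma$ are reducibly equivalent to zero'' --- is asserted but not argued. Note that \emph{reducible} in Definition~\ref{def:red} means ``equivalent to a sum of disconnected graphs'', not ``equivalent to zero''; so knowing that each specialisation factors through smaller invariants does not let you conclude that the corresponding isotypic component vanishes. You would need to show that these factorisations are compatible across all specialisations in a way that lifts to a decomposition of $\Gamma$ itself, and nothing in the exchange lemma gives you this directly. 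The irreducibility argument for $\Upsilon_{\det(J)}$ is also sketchy: ``the possible pairings and higher contractions \ldots cannot match this pattern simultaneously for all choices of $\iota$'' is a plausibility claim, not a proof.

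The paper avoids all of this by simply invoking Theorem~9.2 of \cite{PV}. That result is the classical polarisation statement: for a finite-dimensional $G$-module $U$ of dimension $N$, the multilinear $G$-invariants on $U^{\oplus m}$ with $m>N$ are, modulo the determinant when $m=N$, obtained from invariants involving at most $N-1$ copies. Applied with $U=\bigwedge^k V$ and $N=\binom{n}{k}$, this is exactly the content of the proposition, and the translation to the graph language is immediate. So rather than trying to rebuild this machinery with the exchange lemma and $S_p$-isotypics, you should cite the classical result and be done.
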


\begin{proof}
This follows directly from Theorem 9.2 of~\cite{PV}.
\end{proof}


\section{Invariants of  $\SL_4$}
\label{sec:SL4}

In the case of $\SL_4$, it turns out that elementary graph theoretical and combinatorial considerations suffice to determine a minimal generating set of $\CC[W]^{\SL_4}$. This means that the present section is almost totally self-contained.

\begin{proposition}\label{prop:maxset}
The following graphs constitute a maximal set of reducibly independent irreducible graphs for the action of $\SL_4$ on $W$.

\setlength{\tabcolsep}{1mm}

\begin{longtable}{rcrcrcrc}
\gS{1}
&
\gA{1}{2}{3}{4}
&
\gS{2}
&
\gB{1}{2}{3}{4}
&
\gS{3a}
&
\gCa{1}
&
\gS{3b}
&
\gCb{1}{2}
\\ \ \\
\gS{4}
&
\gD{1}{2}{3}{4}{5}{6}
&
\gS{5}
&
\gE{1}{1}
&
\gS{6}
&
\gF{1}{1}{2}
&
\gS{7a}
&
\gGa{1}{2}{3}{1}
\\ \ \\
\gS{7b}
&
\gGb{1}{2}{3}{1}{2}
&
\gS{7c}
&
\gGc{1}{2}{3}{4}{5}{1}{2}
&
\gS{8a}
&
\gHa{1}{1}{2}
&
\gS{8b}
&
\gHb{1}{2}{3}{1}
\\ \ \\
\gS{8c}
&
\gHc{1}{2}{3}{1}{2}
&
\gS{8d}
&
\gHd{1}{2}{3}{4}{5}{1}{2}
&
\gS{9a}
&
\gIa{1}{2}{1}{1}
&
\gS{9b}
&
\gIb{1}{2}{3}{4}{1}{1}
\end{longtable}
\end{proposition}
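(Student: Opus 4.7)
The plan is to verify three conditions: every graph in the list is irreducible, the listed graphs are reducibly independent, and the list is maximal, in the sense that every irreducible connected graph in $G$ is reducibly equivalent to a $\CC$-linear combination of the listed graphs.

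For the irreducibility of each individual graph, I would check, directly for the short list, that the graph is connected and that no application of the Exchange Lemma \ref{le:exch} produces a sum containing a disconnected summand. Each of the listed shapes is rigid enough that any valid two-vertex exchange either returns the same graph (up to sign and relabeling of shading) or yields another graph of the same shape, never a genuinely simpler one. For reducible independence, the key observation is that the Exchange Lemma preserves, summand by summand, the multiset of edge sizes and colors; hence graphs with distinct signatures lie in distinct equivalence classes. Since the graphs in the list are pairwise distinguished by their signatures together with their underlying uncolored shapes, any reducible linear combination must be trivial.

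For maximality, I would take an arbitrary irreducible connected graph $\Gamma$ and argue it coincides with, or is reducibly equivalent to, a graph in the list. First, using Lemma \ref{le:redvirtdeg}, one reduces to a configuration whose effective graph has no redundant parallel hyperedges (any pair of parallel edges connecting the same two vertices can be absorbed into a loop). Using Proposition \ref{prop:maxcolors}, one bounds the number of distinct colors of $k$-edges that must be considered: when $n_k \geq \binom{4}{k}$, the extra copies are absorbed by the determinant invariants. Then, for every combinatorial pattern of $1$-, $2$-, and $3$-edges arranged on a connected $4$-regular hypergraph, one shows that either $\Gamma$ matches one of graphs 1--9, or a well-chosen Exchange produces a sum of graphs of strictly smaller complexity (measured, for instance, by the number of vertices plus the number of non-looping edges), necessarily producing disconnected summands at some stage, hence reducing.

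The main obstacle is this exhaustive case analysis in the maximality step, especially for subconfigurations built from many $2$-edges, since a pure $2$-edge subgraph behaves like a $4$-regular multigraph with loops and admits arbitrarily long chains and cycles. Showing that only the hexagonal cycle (graph 4) and the short chains already listed (graphs 3a, 3b, 7a--c, 8a--d, 9a, 9b) survive as irreducible requires identifying, for each longer chain or larger cycle, a Plücker-style exchange whose expansion is supported on disconnected graphs. The mixed $1$/$2$/$3$-edge configurations require parallel care: one must choose the Exchange so that it respects the virtual degree at every vertex and lands again in the subalgebra of appropriate bracket polynomials, which is what dictates the precise list of graphs 5, 6, 9a, 9b. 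These reductions also prepare the ground for the explicit identities that will appear later as the relations $\Upsilon_1,\ldots,\Upsilon_{10}$ of Theorem \ref{th:SL4rel}.
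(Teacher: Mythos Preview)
Your outline points in the right direction but has two genuine gaps.

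First, your irreducibility argument does not work. Checking that no \emph{single} application of the Exchange Lemma produces a disconnected summand is not sufficient: reducibility is defined via the full equivalence relation generated by Lemma~\ref{le:exch}, so a graph could become reducible only after a chain of exchanges passing through several intermediate shapes. In fact the Exchange Lemma applied to the listed graphs \emph{does} produce graphs of different shapes (see for instance the graphs $\Gamma_1,\ldots,\Gamma_6$ in Case~3 of the paper's proof). The paper proves irreducibility differently: for each candidate graph $\Gamma$ it defines a linear functional $\phi:G\to\CC$ by $\phi(c\,\Gamma_\sigma)=\sgn(\sigma)c$ on all color-permutations of $\Gamma$ (and of the other shapes reducibly equivalent to it), and $\phi=0$ on everything else; one then checks that $\phi$ is compatible with every Exchange relation of length $|u|=2$, so it descends to $G/\ker(U\circ\gamma)$. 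Since $\phi(\Gamma)\neq 0$ while $\phi$ vanishes on disconnected graphs, $\Gamma$ is irreducible. This is the missing mechanism.

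Second, you correctly flag the long chains and cycles of $2$-edges as the main obstacle but give no strategy for it; a vague complexity measure will not obviously terminate. The paper's argument here is a specific trick that is not easy to guess: after normalizing (via an auxiliary Lemma~\ref{le:redgraphs}) so that every vertex carries a looping $2$-edge and the effective graph is a cycle or chain, one takes a graph of form $4$, $7$, or $9$ with at least four vertices, applies the Exchange Lemma once to the edge of color~$\iiE{1}$ and once to the edge of color~$\iiE{6}$, and uses the symmetry of the resulting summands together with the color-swap antisymmetry $\Gamma\simeq_r\sgn(\sigma)\Gamma_\sigma$ (Lemma~\ref{le:tricks}) to deduce $\Gamma'\simeq_r -\Gamma'$ for a suitable $\Gamma'\simeq_r -\tfrac12\Gamma$, hence $\Gamma$ is reducible. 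The same antisymmetry lemma is what forces all $2$-edge colors to be pairwise distinct in the surviving graphs. Without these two ingredients---the $\phi$-functional for irreducibility and the symmetry cancellation for bounding vertex number---the case analysis you describe cannot be closed.
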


\begin{remark}\label{re:4det}
According to Proposition~\ref{prop:maxcolors}, some multiple of graph no. 4 must be reducibly equivalent to some graphsum $\Upsilon$ with $U\circ\gamma(\Upsilon)=\det$. In fact,
\begin{center}
\gS{\Upsilon=}%
\gD{1}{2}{3}{4}{5}{6}
\gS{-}
\gD{6}{1}{2}{3}{4}{5}%
\gS{\simeq_r}
\gS{2}%
\gD{1}{2}{3}{4}{5}{6}%
\end{center} 
has this property, as it is alternating in the colors, i.e. $\Upsilon$ is not only reducibly, but truly equivalent to $\sgn(\sigma)\Upsilon_\sigma$. The more pleasant display has of course graph no. 4, while for some purposes like for example finding relations, $\Upsilon$ will do better. The corresponding bracket polynomial, as well as those of graphs no. 3a and 3b, turns also up in~\cite{mcmillan, RStu, White}, while no attempt is made there to show that these give a minimal generating set of $\CC[W_{(0,n_2,0,0)}]^{\SL_4}$.
\end{remark}

\begin{lemma}\label{le:redgraphs}
Every irreducible graph for the action of $\SL_4$ on $W$ is reducibly  equivalent to a graphsum $\sum a_i \Gamma_i$, where all $\Gamma_i$ are of the same form. This form is one of the following:
\begin{longtable}{rcrcrcrcrc}
\gS{1.}
&
\gA{}{}{}{}
&
\gS{2.}
&
\gB{}{}{}{}
&
\gS{3.}
&
\gCa{}
&
\gS{4.}
&
\gDA{}{}{}{}{}{}
&
\gS{5.}
&
\gE{}{}
\\
\gS{6.}
&
\gF{}{}{}
&
\gS{7.}
&
\gGA{}{}{}{}{}{}{}
&
\gS{8.}
&
\gHA{}{}{}{}{}

&
\gS{9.}
&
\gIA{}{}{}{}{}{}

\end{longtable}

\end{lemma}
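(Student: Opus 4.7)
The approach is case analysis on the number of vertices $m$ of the irreducible connected graph $\Gamma$, with the Exchange Lemma (packaged as Lemma~\ref{le:redvirtdeg}) as the main reduction tool: this lemma lets us take any non-looping edge at a vertex $v$ and convert its contribution into a looping edge of the same color at $v$, at the cost of a correction graphsum whose summands redistribute the letters at the other endpoints.

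For the base case $m=1$, every edge is a loop, so the total degree must partition $n=4$ into parts from $\{1,2,3\}$. The only such partitions are $1{+}1{+}1{+}1$, $2{+}1{+}1$, $2{+}2$, and $3{+}1$, which yield exactly the four single-vertex shapes 1, 6, 3, 5. This settles the $m=1$ case directly.

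For $m\geq 2$ the plan splits into two reduction steps. First, I would show that every vertex of $\Gamma$ can be assumed to have virtual degree at most $2$ (with one exceptional configuration governing form~2). If some vertex $v$ has virtual degree $\geq 3$, I would pick two non-looping edges incident to it and apply Lemma~\ref{le:redvirtdeg}: their combined multiplicity at $v$ gets pushed into a single loop at $v$, while the correction graphs either have strictly smaller virtual degree at $v$ or already fall into one of the listed patterns. Iterating this until no vertex exceeds virtual degree~$2$ (or has only a single non-loop 3-edge, accounting for form~2) bounds the local complexity of the graph. Second, with vdeg $\leq 2$ everywhere, $\Gamma_\eff$ is a disjoint union of paths and simple cycles; since $\Gamma$ is irreducible, $\Gamma_\eff$ is connected, so it is a single path, a single cycle, or the exceptional single $3$-spanning hyperedge of form~2. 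Any cycle can be opened into a sum of paths by a further Exchange-Lemma application, and any path with $\geq 3$ effective edges can be shortened by applying the lemma at an interior vertex, producing additional loops at that vertex. Thus only paths with at most two effective edges remain, and a case split on the loop decoration at the interior vertices and on the type of endpoint (bare, 1-edge, or 3-edge/3-loop) produces exactly forms 4, 7, 8, 9; the remaining possibility is the single-edge form 2.

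The main obstacle is tracking the correction graphsums. Each application of Lemma~\ref{le:redvirtdeg} redistributes letters at the neighbours of $v$, and one must verify that every correction summand is itself reducibly equivalent to a graph of one of the listed forms, or else further reducible. The analysis is especially delicate when 1-, 2-, and 3-edges coexist at a vertex, since 3-edges can be ``partial loops'' $\{v,v,u\}$ and 1-edges do not appear in $\Gamma_\eff$ at all, so reductions that are invisible at the effective-graph level can still re-appear as new 1-edges or as partial-loop 3-edges elsewhere. The dotted edges in the diagrams for forms 4, 7, 8, 9 encode precisely the residual path structure that the above Plücker-type rewriting cannot simplify any further, so verifying the list is exhaustive amounts to checking that each of these residual shapes is indeed the endpoint of the reduction procedure.
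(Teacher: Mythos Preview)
Your overall strategy---reduce virtual degrees via Lemma~\ref{le:redvirtdeg}, then analyse the effective graph---is the right one, and the $m=1$ case is handled correctly. But the heart of your $m\geq 2$ argument contains two incorrect claims that misread the target forms.

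First, you assert that ``any cycle can be opened into a sum of paths''. This is false: form~4 \emph{is} a cycle (of arbitrary length---the dotted edge indicates this), and it is not reducibly equivalent to a sum of paths. In the paper's argument, when only $2$-edges are present, Lemma~\ref{le:redvirtdeg} forces every vertex to carry one looping $2$-edge, so the effective graph is $2$-regular and connected, i.e.\ a cycle; this \emph{is} the terminal form, not something to be further reduced. Second, you assert that ``any path with $\geq 3$ effective edges can be shortened by applying the lemma at an interior vertex''. Again false: forms~7, 8, 9 are chains of arbitrary length. If you apply Lemma~\ref{le:redvirtdeg} at an interior vertex of such a chain to turn a non-looping $2$-edge into a loop, that vertex acquires virtual degree~$0$ and the graph disconnects---so every summand in the resulting graphsum is disconnected. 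That would prove the original chain is \emph{reducible}, which is wrong in general (indeed Proposition~\ref{prop:maxset} exhibits irreducible chains). The lemma does not ``shorten'' paths; it trades connectivity for loops.

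The paper avoids these pitfalls by organising the case analysis differently: rather than by the number of vertices, it splits according to \emph{which edge sizes occur} (only $1$-edges; only $3$-edges; only $2$-edges; then the mixed cases). In each case, Lemma~\ref{le:redvirtdeg} is used to force a looping edge of the largest available size at every vertex, which pins down the virtual degree at each vertex (e.g.\ all degree~$1$ in the pure $3$-edge case, all degree~$2$ in the pure $2$-edge case, a mix of~$1$ and~$2$ in the mixed cases). The shape of the effective graph---cycle or chain---then falls out immediately from connectedness, with no attempt to bound its length at this stage; bounding the length is deferred to the proof of Proposition~\ref{prop:maxset}.
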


\begin{proof}
Let $\Gamma$ be an irreducible graph. First assume $\Gamma$ has only $1$-edges. Then $\Gamma$ is connected and thus irreducible only if it has one vertex. We are in Case $1$. 

Now assume $\Gamma$ has only $3$-edges. Then with Lemma~\ref{le:redvirtdeg}, it is reducibly equivalent to a sum of graphs $\Gamma_i$ having only vertices of virtual degree one. Now take one vertex $v_1$ of an arbitrary $\Gamma_i$. There must be a non-looping $3$-edge $e$ with one connection to $v_1$. Since all vertices have virtual degree one, there must be two vertices $v_2$ and $v_3$ with $e=\{v_1,v_2,v_3\}$. Thus $v_1$, $v_2$ and $v_3$ together with $e$ and their respective looping $3$-edge are a connected component of $\Gamma_i$ and since $\Gamma_i$ is connected, we are in Case $2$.

If now $\Gamma$ has only $2$-edges and it has only one vertex, we are in Case $3$. Assume it has more than one vertex. Again by Lemma~\ref{le:redvirtdeg}, $\Gamma$ is reducibly equivalent to a sum of connected graphs $\Gamma_i$ where for each of them every vertex has one (and only one) looping edge and is thus of virtual degree two. 
The effective graph of an arbitrary $\Gamma_i$ must be a \emph{connected simple 2-regular graph}, i.e. a cycle, and we are in Case $4$.

We come to the cases where $\Gamma$ has hyperedges of two different sizes. Let us begin assuming there are $1$- and $3$-edges. Again using Lemma~\ref{le:redvirtdeg}, we can move on to some $\Gamma_i$ with vertices $v_1, \ldots, v_N$ each with a looping $3$-edge. If there is an additional vertex $v_{N+1}$ with a connection to a $3$-edge, we can move on to a graph having $N+1$ vertices with a looping $3$-edge. If there is an additional vertex with no connection to a $3$-edge, it constitutes a connected component like in Case 1. Thus we can assume all vertices of $\Gamma_i$ have a looping $3$-edge. Now if any vertex despite for its looping $3$-edge has a connection to another $3$-edge, we have a connected component like in Case 2. Thus each vertex must have a connection to a $1$-edge. Since $\Gamma_i$ is connected we are in Case $5$.

Now let $\Gamma$ have $1$- and $2$-edges. We use Lemma~\ref{le:redvirtdeg}, move on to some $\Gamma_i$ and can with the same argumentation as in the previous case assume that every vertex has a looping $2$-edge. If there is a vertex connected to two $1$-edges, we are in Case $6$. If not, there are vertices of virtual degree one and two. Thus the effective graph of $\Gamma_i$  must be a \emph{chain} and we are in Case $7$.

The same argumentation goes through if $\Gamma$ has $2$- and $3$-edges. By Lemma~\ref{le:redvirtdeg}, we have vertices of virtual degree one and two and the effective graph of $\Gamma_i$ must be a chain, giving Case $8$.

Finally let $\Gamma$ have $1$-, $2$-, and $3$-edges. Once more, we have vertices of virtual degree one and two by Lemma~\ref{le:redvirtdeg} and the effective graph of $\Gamma_i$ must be a chain, leading to Case $9$.
\end{proof}

When we say '$\Gamma$ is of the form $n$' in the following, we mean that the graph $\Gamma$ falls under Case $n$ of Lemma~\ref{le:redgraphs}, while we use the term 'graph no. n' for the colored graphs from Proposition~\ref{prop:maxset}.

\begin{lemma}\label{le:tricks}
Let $\Gamma$ be one of the graphs from Lemma~\ref{le:redgraphs} with two or more vertices and arbitrary coloring, let $\sigma$ be a permutation of the colors of the $2$-edges and $\Gamma_\sigma$ be the graph with permuted colors. Then $\Gamma\simeq_r\sgn(\sigma)\Gamma_\sigma$.
\end{lemma}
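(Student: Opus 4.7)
The plan is to reduce the statement to the case of a transposition and then apply Lemma~\ref{le:exch} to derive the sign. Since every permutation factors into transpositions and $\sgn$ is multiplicative, and since $\simeq_r$ respects multiplication by common graph factors (from the definition via $\ker(U\circ\gamma)$), it suffices to prove $\Gamma \simeq_r -\Gamma_\sigma$ for a single transposition $\sigma = (ij)$ of two distinct 2-edge colors. Writing $a_i := a_{2,i,1}$ and $a_j := a_{2,j,1}$, each of these letters appears with total multiplicity $2$ in $p_\Gamma = \gamma(\Gamma)$, distributed across at most two brackets $B_1, B_2$ (one bracket if the color sits on a loop, two if it sits on a non-loop).

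The key computation will be to apply Lemma~\ref{le:exch} to the sub-product $B_1 B_2$ with $u = a_i^{(\mu_i)} a_j^{(\mu_j)}$ (collecting all four $\{a_i, a_j\}$-copies, so $\mu_i + \mu_j = 4$) and with $v, w$ the complementary letters of $B_1, B_2$. Expanding the left-hand side via the divided-power coproduct $\Delta(u)$ yields three kinds of splittings $u_1 u_2 = u$ of the required degrees: the two \emph{pure} splittings $(a_i^{(\mu_i)}, a_j^{(\mu_j)})$ and $(a_j^{(\mu_j)}, a_i^{(\mu_i)})$ recover $p_\Gamma$ and $p_{\Gamma_\sigma}$ respectively, while the intermediate \emph{mixed} splittings produce bracket monomials in which the copies of $a_i$ and $a_j$ are interleaved across $B_1$ and $B_2$. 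The right-hand side of Exchange collapses to a product whose first factor is a bracket $[a_i^{(\mu_i)} a_j^{(\mu_j)}]$ isolating both colors on a single new vertex; the associated graph has the subgraph of colors $i,j$ detached from the rest, hence is disconnected and reducible.

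The next step is to show that every mixed term is itself reducibly equivalent to a linear combination of $p_\Gamma$, $p_{\Gamma_\sigma}$, and reducible graphs. The mixed brackets describe configurations in which one or two copies of colors $i,j$ have migrated to form extra non-loop 2-edges between the vertices of $B_1$ and $B_2$; using Lemma~\ref{le:redvirtdeg}, these migrated edges can be reabsorbed into loops, and a second (iterated) Exchange application closes the resulting graphs back into $p_\Gamma$, $p_{\Gamma_\sigma}$, and further disconnected pieces. Solving the small linear system that emerges gives $p_\Gamma + p_{\Gamma_\sigma} \simeq_r 0$, which is the identity $\Gamma \simeq_r \sgn(\sigma)\Gamma_\sigma$ for a transposition.

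The main obstacle is the case analysis for the mixed terms, which branches according to whether each of colors $i, j$ sits on a looping or non-looping 2-edge and on the specific form (4, 7, 8, or 9) of $\Gamma$; one must verify in each case that the mixed configurations, though a priori connected, all resolve via iterated Exchange and Lemma~\ref{le:redvirtdeg} to $\pm p_\Gamma, \pm p_{\Gamma_\sigma}$ or to disconnected graphs. A conceptually cleaner alternative, suggested by Remark~\ref{re:4det} for Form 4, identifies $p_\Gamma$ up to reducibility with a determinant of Plücker coordinates indexed by the 2-edge colors, from which antisymmetry is tautological; this determinantal viewpoint can be lifted to Forms 7--9 by treating the endpoint 1- or 3-edges as a multiplicative factor invariant under the color permutation.
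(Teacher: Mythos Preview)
Your reduction to transpositions is correct, but the core computation has a gap. You claim that the four copies of $a_i,a_j$ are ``distributed across at most two brackets $B_1,B_2$'' and then apply the Exchange Lemma to that pair. This is false for a general transposition: if color $i$ is a loop at $v_1$ and color $j$ is a non-loop between $v_2$ and $v_3$ with $v_1,v_2,v_3$ distinct, the four copies sit in \emph{three} brackets; if both colors are non-loops on disjoint pairs of vertices, they sit in four. In those situations there is no single product $B_1B_2$ to feed into Lemma~\ref{le:exch}, and your subsequent splitting analysis does not even get started. Your treatment of the mixed terms is also only a sketch: you assert they resolve via iterated Exchange and Lemma~\ref{le:redvirtdeg} into $\pm\Gamma,\pm\Gamma_\sigma$ and disconnected pieces, but you give no mechanism for this and no reason the linear system closes. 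Finally, the determinantal alternative does not extend as you suggest: for Forms~7--9 the endpoint $1$- or $3$-edges are contracted into the same tensor network as the $2$-edges and are not a detachable scalar factor.

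The paper avoids all of this by not attempting an arbitrary transposition. It takes $\sigma$ to swap a looping $2$-edge $(\{v,v\},j_1,k_1)$ with a non-looping $2$-edge $(\{v,v'\},j_2,k_2)$ sharing the same vertex $v$. Then all four copies genuinely live in the two brackets at $v$ and $v'$, and Exchange with $u=a_{2,j_1,k_1}^{(2)}a_{2,j_2,k_2}^{(2)}$ gives exactly $\Gamma+\Gamma_\sigma\simeq-\Gamma'$, where $\Gamma'$ has both colors as loops at $v$; the vertex $v$ is then isolated, so $\Gamma'$ is disconnected and hence reducible. No mixed terms appear and no iteration is needed. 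Since the effective graph in Forms~4,7,8,9 is a connected chain or cycle with a loop at every vertex, these loop/adjacent-non-loop swaps generate the full symmetric group on the $2$-edge colors, and the general case follows. The fix for your argument is therefore not to patch the mixed-term analysis but to restrict to this generating set of transpositions from the outset.
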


\begin{proof}
We show that $\Gamma\simeq_r-\Gamma_\sigma$, where $\sigma$ swaps the colors of a looping and a non-looping $2$-edge connected with the same vertex. From this the assertion follows immediately.

So let $v \neq v'$ be vertices of $\Gamma$ and $(\{v,v\},j_1,k_1)$, $(\{v,v'\},j_2,k_2)$ the respective two edges. We use Lemma~\ref{le:exch} with $u=a_{2,j_1,k_1}^{(2)}a_{2,j_2,k_2}^{(2)}$ and get
$
\Gamma+\Gamma_\sigma\simeq-\Gamma',
$
where $\Gamma'$ has the looping $2$-edges $(\{v,v\},j_1,k_1)$, $(\{v,v\},j_2,k_2)$ and is thus reducible. So $\Gamma\simeq_r-\Gamma_\sigma$.
\end{proof}

\begin{proof}[Proof of Proposition~\ref{prop:maxset}]
For graphs $\Gamma$ of the forms $1$, $2$ and $5$, the corresponding invariants are 'determinants' and 'dot products', in classical terms 'invariants of systems of vectors and linear forms', see for example~\cite[p. 254]{PV}. Graphs of the form $3$ either give the 'Pfaffian' if both edges have the same color, or a variation of it. Graphs $\Gamma$ of the form $6$ neither are disconnected nor $U \circ \gamma(\Gamma)=0$ unless both $1$-edges have the same color. 

The remaining types to check for irreducibility are $4$, $7$, $8$, $9$. By the duality of $V$ and $\BigWedge^{n-1}V$, we can reduce form 8 to form 7. Thus $4$, $7$, $9$ remain.  In all these cases we require all $2$-edges to be of pairwise different color, otherwise Lemma~\ref{le:tricks} with $\sigma$ swapping two edges of the same color would result in $\Gamma \simeq_r (\Gamma + \Gamma_\sigma)/2$, which is reducible.

Now first we show that if a graph of one of these types has four or more vertices, it is reducible. We observe graphs $\Gamma_i$ of the forms:
\begin{center}
\gS{\Gamma_4\!=}%
\begin{tikzpicture}[baseline=(A),outer sep=0pt,inner sep=0pt]
\node (A) at (0,0) {};
\twoe{0}{3};
\twoe{0.7}{5};
\twoe{2.1}{7};
\twoe{2.8}{};
\node[font=\tiny,align=center]  at (2.8,0.5) {$\iiE{n}-\iiE{1}$};
\twoeAB{0}{1.4}{2};
\twoeAB{1.4}{2.8}{n};
\twoeABS{0}{0.7}{4};
\twoeABS{0.7}{1.4}{1};
\twoeABS{1.4}{2.1}{6};
\draw[line width=0.5pt, draw=black, dotted] (2.1,0)-- (2.8,0);
\end{tikzpicture}
\gS{\Gamma_7\!=}%
\begin{tikzpicture}[baseline=(A),outer sep=0pt,inner sep=0pt]
\node (A) at (0,0) {};
\twoe{0}{3};
\twoe{0.7}{5};
\twoe{2.1}{7};
\twoe{2.8}{};
\node[font=\tiny,align=center]  at (2.8,0.5) {$\iiE{n}-\iiE{1}$};
\twoeAB{0}{1.4}{2};
\onee{1.4}{1};
\onee{2.8}{2};
\twoeABS{0}{0.7}{4};
\twoeABS{0.7}{1.4}{1};
\twoeABS{1.4}{2.1}{6};
\draw[line width=0.5pt, draw=black, dotted] (2.1,0)-- (2.8,0);
\end{tikzpicture}
\gS{\Gamma_9\!=}%
\begin{tikzpicture}[baseline=(A),outer sep=0pt,inner sep=0pt]
\node (A) at (0,0) {};
\twoe{0}{3};
\twoe{0.7}{5};
\twoe{2.1}{7};
\thre{2.8}{1};
\twoeAB{0}{1.4}{2};
\onee{1.4}{1};
\twoeABS{0}{0.7}{4};
\twoeABS{0.7}{1.4}{1};
\twoeABS{1.4}{2.1}{6};
\draw[line width=0.5pt, draw=black, dotted] (2.1,0)-- (2.8,0);
\end{tikzpicture}
\end{center}

\noindent
We proceed exemplarily with $\Gamma_4$. Applying Lemma~\ref{le:exch} with $u$ the word corresponding to the $2$-edge of color $\iiE{1}$, we get that 
\begin{center}
\gS{\Gamma_4}
\gS{+}
\begin{tikzpicture}[baseline=(A),outer sep=0pt,inner sep=0pt]
\node (A) at (0,0) {};
\twoe{0}{3};
\twoe{0.7}{5};
\twoe{2.1}{7};
\twoe{2.8}{};
\node[font=\tiny,align=center]  at (2.8,0.5) {$\iiE{n}-\iiE{1}$};
\twoeABfd{0}{1.4}{2}{0.3};
\twoeABfd{1.4}{2.8}{n}{0.3};
\twoeABS{0}{0.7}{4};
\twoed{1.4}{1};
\twoeABfu{0.7}{2.1}{6}{0.3};
\draw[line width=0.5pt, draw=black, dotted] (2.1,0)-- (2.8,0);
\end{tikzpicture}
\gS{+}
\begin{tikzpicture}[baseline=(A),outer sep=0pt,inner sep=0pt]
\node (A) at (0,0) {};
\twoe{0}{3};
\twoe{0.7}{5};
\twoed{2.1}{7};
\twoed{2.8}{};
\node[font=\tiny,align=center]  at (2.8,-0.5) {$\iiE{n}-\iiE{1}$};
\twoeABfd{0}{1.4}{2}{0.3};
\twoeABfu{0.7}{2.8}{n}{0.5};
\twoeABS{0}{0.7}{4};
\twoed{1.4}{1};
\twoeABS{1.4}{2.1}{6};
\draw[line width=0.5pt, draw=black, dotted] (2.1,0)-- (2.8,0);
\end{tikzpicture}
\end{center}

\noindent
is reducible.
Let us call the second and third graph in this sum $\Gamma_{4,1}$ and $\Gamma_{4,2}$ respectively.
Since graphs with permuted vertices are equivalent, by swapping colors $1$ and $5$ as well as $2$ and $4$, we get  $\Gamma_{4,2}\simeq_r\Gamma_{4,1}$, so $\Gamma_4\simeq_r -2\Gamma_{4,1}$. Applying Lemma~\ref{le:exch} again, now with $u$ the word corresponding to the $2$-edge of color $6$, we get that

\begin{center}
\gS{\Gamma_4}
\gS{+}
\begin{tikzpicture}[baseline=(A),outer sep=0pt,inner sep=0pt]
\node (A) at (0,0) {};
\twoe{0}{3};
\twoe{0.7}{5};
\twoe{2.1}{7};
\twoe{2.8}{};
\node[font=\tiny,align=center]  at (2.8,0.5) {$\iiE{n}-\iiE{1}$};
\twoeABfd{0}{1.4}{2}{0.3};
\twoeABfd{1.4}{2.8}{n}{0.3};
\twoeABS{0}{0.7}{4};
\twoed{1.4}{6};
\twoeABfu{0.7}{2.1}{1}{0.3};
\draw[line width=0.5pt, draw=black, dotted] (2.1,0)-- (2.8,0);
\end{tikzpicture}
\gS{+}
\begin{tikzpicture}[baseline=(A),outer sep=0pt,inner sep=0pt]
\node (A) at (0,0) {};
\twoed{0}{3};
\twoed{0.7}{5};
\twoe{2.1}{7};
\twoe{2.8}{};
\node[font=\tiny,align=center]  at (2.8,0.5) {$\iiE{n}-\iiE{1}$};
\twoeABfd{1.4}{2.8}{n}{0.3};
\twoeABfu{0}{2.1}{2}{0.5};
\twoeABS{0}{0.7}{4};
\twoed{1.4}{6};
\twoeABS{0.7}{1.4}{1};
\draw[line width=0.5pt, draw=black, dotted] (2.1,0)-- (2.8,0);
\end{tikzpicture}
\end{center}

\noindent
is reducible.
Calling the second and third graph in this sum $\Gamma_{4,3}$ and $\Gamma_{4,4}$ respectively, by symmetry reasons, we have $\Gamma_{4,3}\simeq_r\Gamma_{4,4}$ and by swapping the edges of colors $1$ and $6$ in $\Gamma_{4,3}$ we get $\Gamma_{4,3}\simeq_r-\Gamma_{4,1}$, thus $2\Gamma_{4,1}\simeq_r-2\Gamma_{4,1}$ and $\Gamma_{4,1}$ must be reducible. Exactly the same procedure, namely two times applying Lemma~\ref{le:exch}, one time on the edge of color 1, one time on that of color 6, leads to reducibility of graphs of the forms 4, 7, and 9 with four or more vertices. We observe these with three or less vertices in the following.

\medskip

\noindent
\textbf{Case 1: $\Gamma$ is of the form 7.} Here $\Gamma$ has either two or three vertices.  
We first check these with two vertices. All of the graphs with $2$-edges of pairwise different colors $\iiE{1}$, $\iiE{2}$, $\iiE{3}$, and $1$-edges of possibly non-different colors $\iE{1}$ and $\iE{2}$ are either reducible or of the form
$\Gamma_\sigma$, where $\sigma$ permutes colors of $2$-edges and 
\begin{center}
\gS{\Gamma=}
\gGb{1}{2}{3}{1}{2}
\gES{.}
\end{center}

\noindent
We define a map $\phi:G\to\CC$ by setting $\phi(c\Gamma_\sigma)=\sgn(\sigma)c$ and $\phi(\Gamma')=0$ for graphs $\Gamma'$ of other forms. Since $\Gamma \notin \ker(U\circ\gamma)$ and all relations from Lemma~\ref{le:exch} with $|u|=2$ involving $\Gamma$ are compatible with $\phi$ in the sense that $\phi(\Upsilon_1)=\phi(\Upsilon_2)$ if $\gamma(\Upsilon_1)=\gamma(\Upsilon_2)$ for two graphsums $\Upsilon_1=\Gamma+\Upsilon_1'$ and $\Upsilon_2$, $\phi$ induces a well-defined map $\phi':G/\ker(U\circ\gamma)\to\CC$ and $\Gamma$ is irreducible.

Now if $\Gamma$ is of type 7 with three vertices and the two $1$-edges are of the same color, we have $U\circ\gamma(\Gamma)=-U\circ\gamma(\Gamma_\tau)$, where $\tau$ interchanges the two $1$-edges. On the other hand, we have 
\begin{center}
\gGc{1}{2}{3}{4}{5}{1}{1}
\gS{\simeq_{r}}
\gGc{5}{4}{3}{2}{1}{1}{1}
\end{center}

\noindent
by swapping colors $\iiE{1}$ and $\iiE{5}$ as well as $\iiE{2}$ and $\iiE{4}$ of $2$-edges and see that $\Gamma$ is reducible.  Now all graphs with three vertices and with $2$-edges of pairwise different colors $\iiE{1}$-$\iiE{5}$ and $1$-edges of different colors $\iE{1}$ and $\iiE{2}$ are either reducible or of the form $\Gamma_\sigma$, $\Gamma'_\sigma$, where $\sigma$ permutes $2$-edges and 

\begin{center}
\gS{\Gamma=}
\gGc{1}{2}{3}{4}{5}{1}{2}
\gES{,}
\gS{\Gamma'=}
\begin{tikzpicture}[baseline=(A),outer sep=0pt,inner sep=0pt]
\node (A) at (0,0) {};
\node[font=\tiny,align=center] at (0,-0.5) {$\iiE{1}$};
\node[font=\tiny,align=center] at (-0.35,0.1) {$\iiE{2}$};
\node[font=\tiny,align=center] at (0.35,0.1) {$\iiE{4}$};
\draw[line width=0.5pt, draw=black] (-0.7,0)-- (0.7,0);
\draw[black, fill=black] (0,0) circle [radius=1.5pt];
\draw[black, fill=black] (0.7,0) circle [radius=1.5pt];
\draw[black, fill=black] (-0.7,0) circle [radius=1.5pt];
\oneeu{-0.7}{1}
\onee{-0.7}{2}
\twoe{0}{3}
\twoe{0.7}{5}
\draw[line width=0.5pt, draw=black]  (0.7,0) .. controls (0.6,-0.5) and (-0.6,-0.5) .. (-0.7,0) ;
\end{tikzpicture}
\gES{.}
\end{center}

\noindent
By applying Lemma~\ref{le:exch} on the $2$-edge of $\Gamma'$ of  color $\iiE{1}$, we see that $\Gamma'\simeq_r -2\Gamma$. Similar as we did before, we define a map $\phi:G\to\CC$ by setting $\phi(c\Gamma_\sigma)=\sgn(\sigma)c$, $\phi(c\Gamma'_\sigma)=-\sgn(\sigma)2c$ and $\phi(\Gamma^*)=0$ for graphs $\Gamma^*$ of other forms. As before, $\phi$ induces a well-defined map $\phi':G/\ker(U\circ\gamma)\to\CC$ and $\Gamma$ thus is irreducible.

\medskip

\noindent
\textbf{Case 2: $\Gamma$ is of the form 4.}
If such a graph has two vertices, it has two edges $(\{v_1,v_2\},j_1,k_1)$ and $(\{v_1,v_2\},j_2,k_2)$.  Then by Lemma~\ref{le:exch} with $u=a_{2,j_1,k_1}^{(2)}a_{2,j_2,k_2}^{(2)}$, it is reducible. The case of three vertices remains.
All graphs with three vertices and six $2$-edges of different colors  $\iiE{1}$-$\iiE{6}$ are either reducible or of the form $\Gamma_\sigma$, where

\begin{center}
\gS{\Gamma=}
\gD{1}{2}{3}{4}{5}{6}
\gES{.}
\end{center}

Again the map $\phi:G\to\CC$ defined by $\phi(c\Gamma_\sigma)=\sgn(\sigma)c$ and $\phi(\Gamma^*)=0$ for graphs $\Gamma^*$ of other forms induces a well-defined map $\phi':G/\ker(U\circ\gamma)\to\CC$ and $\Gamma$ thus is irreducible.

\medskip

\noindent
\textbf{Case 3: $\Gamma$ is of the form 9.} Here $\Gamma$ can have two or three vertices. We begin with the case of two. All graphs with  one $1$-edge of color $\iE{1}$, two $2$-edges of colors $\iiE{1}$ and $\iiE{2}$, and one $3$-edge of color $\iiiE{1}$ are either reducible or of the form $(\Gamma_{i})_{\sigma}$ with

\begin{center}
\gS{\Gamma_1=}
\gIa{1}{2}{1}{1}
\gES{,}
\gS{\Gamma_2=}
\begin{tikzpicture}[baseline=(A),outer sep=0pt,inner sep=0pt]
\node (A) at (0,0) {};
\twoe{0}{1}
\twoe{0.7}{2}
\onee{0}{1}
\threAB{0}{0.7}{1}
\end{tikzpicture}
\gES{,}
\gS{\Gamma_3=}
\begin{tikzpicture}[baseline=(A),outer sep=0pt,inner sep=0pt]
\node (A) at (0,0) {};
\twoe{0}{1}
\twoeABS{0}{0.7}{2}
\oneeu{0.7}{1}
\threAB{0}{0.7}{1}
\end{tikzpicture}
\gES{.}
\end{center}

Applying Lemma~\ref{le:exch} with $u$ the word corresponding to the $3$-edge, we see $\Gamma_3\simeq_r\Gamma_2\simeq_r\Gamma_1$. The map $\phi:G\to\CC$ defined by $\phi(c (\Gamma_i)_\sigma)=\sgn(\sigma) c$ and $\phi(\Gamma^*)=0$ for graphs $\Gamma'$ of other forms induces a well-defined map $\phi':G/\ker(U\circ\gamma)\to\CC$ and $\Gamma_1$ thus is irreducible.

We come to those graphs with three vertices. All graphs with  one $1$-edge of color $\iE{1}$, four $2$-edges of pairwise different colors $\iiE{1}$-$\iiE{4}$, and one $3$-edge of color $\iiiE{1}$ are either reducible or of the form $(\Gamma_{i})_{\sigma}$ with

\begin{longtable}{lll}
\gS{\Gamma_1=}
\gIb{1}{2}{3}{4}{1}{1}
\gES{,}
&
\gS{\Gamma_2=}
\begin{tikzpicture}[baseline=(A),outer sep=0pt,inner sep=0pt]
\node (A) at (0,0) {};
\twoe{-0.7}{1}
\twoeABS{-0.7}{0}{2}
\twoe{0}{3}
\twoe{0.7}{4}
\onee{-0.7}{1}
\threAB{0}{0.7}{1}
\end{tikzpicture}
\gES{,}
&
\gS{\Gamma_3=}
\begin{tikzpicture}[baseline=(A),outer sep=0pt,inner sep=0pt]
\node (A) at (0,0) {};
\twoe{-0.7}{1}
\twoeABS{-0.7}{0}{2}
\twoeABS{0}{0.7}{3}
\twoe{0.7}{4}
\onee{-0.7}{1}
\threBA{0}{0.7}{1}
\end{tikzpicture}
\gES{,}
\\ \ \\
\gS{\Gamma_4=}
\begin{tikzpicture}[baseline=(A),outer sep=0pt,inner sep=0pt]
\node (A) at (0,0) {};
\twoed{-0.7}{1}
\twoeABS{-0.7}{0}{2}
\twoeABS{0}{0.7}{3}
\twoeABfu{-0.7}{0.7}{4}{0.5}
\onee{0}{1}
\threAB{0}{0.7}{1}
\end{tikzpicture}
\gES{,}
&
\gS{\Gamma_5=}
\begin{tikzpicture}[baseline=(A),outer sep=0pt,inner sep=0pt]
\node (A) at (0,0) {};
\twoe{-0.7}{1}
\twoeABS{-0.7}{0}{2}
\twoeABS{0}{0.7}{4}
\twoe{0}{3}
\oneeu{0.7}{1}
\threAB{-0.7}{0.7}{1}
\end{tikzpicture}
\gES{,}
&
\gS{\Gamma_6=}
\begin{tikzpicture}[baseline=(A),outer sep=0pt,inner sep=0pt]
\node (A) at (0,0) {};
\twoe{-0.7}{1}
\twoeABS{-0.7}{0}{2}
\twoeABS{0}{0.7}{3}
\twoe{0.7}{4}
\oneeu{0}{1}
\draw[line width=0.5pt, draw=black]  (0,0) -- (0,-0.38) ;

\draw[line width=0.5pt, draw=black]  (0.7,0) .. controls (0.6,-0.5) and (-0.6,-0.5) .. (-0.7,0) ;
\node[font=\tiny,align=center] at (0,-0.5) {$\iiiE{1}$};
\end{tikzpicture}
\gES{.}
\end{longtable}

We apply Lemma~\ref{le:exch} with $u$ the word corresponding to the $3$-edge (to two ends of the $3$-edge in the case of $\Gamma_6$) and get $\Gamma_1\simeq_r\Gamma_3\simeq_r-\Gamma_2\simeq_r-\Gamma_5$ and $\Gamma_6\simeq_r\Gamma_4\simeq_r-2\Gamma_1$. The map $\phi:G\to\CC$ defined in the usual form induces a well-defined map $\phi':G/\ker(U\circ\gamma)\to\CC$ and $\Gamma_1$ thus is irreducible. The proof is complete.

\end{proof}

\begin{remark}
To show that the graphs with four (five in the case of those of form 8 respectively) or more vertices are reducible, we also could use Proposition~\ref{prop:maxcolors} together with the fact that graphs with two $2$-edges of the same color are reducible. We preferred the more self contained version here, because it provides more insight \emph{why} this is so from our combinatorial viewpoint. We want to stress that in our opinion, this reducibility is almost impossible to see without the graph notation, which might explain why Huang in~\cite{Hu} could not reduce her generating set of cycles to a minimal one.

Moreover, to show the irreducibility of the remaining graphs, one can also compute the Hilbert series for small but sufficiently large values of respective $n_i$'s using~\cite[\S 4.6]{KD} and Xin's  algorithm~\cite{Xin} for MacMahon partition analysis. In fact, Xin's algorithm performs very good for such small values in our case.
\end{remark}

\begin{proof}[Proof of Theorem~\ref{th:fftSL4}]
Proposition~\ref{prop:maxset} provides us with a maximal set of reducibly independent irreducible graphs. The corresponding invariants can be computed according to the rules from Definition~\ref{def:umbralop} or by computing the complete contractions given in the introduction. The author used the \emph{DifferentialGeometry} package of \textsc{Maple} for these computations.
\end{proof}

\section{Invariants of $\SL_5$}
\label{sec:SL5}

In order to prove Theorem~\ref{th:fftSL5}, we need some more techniques than the ones we developed for $\SL_4$. The duality of $\BigWedge^k V$ and $\BigWedge^{n-k} V$ becomes very important and we introduce a new reducibility notion that is essential (and will be even more in higher dimensions). As Theorem~\ref{th:fftSL5} does not provide any colorings, we give these exemplarily for $W_{(0,n_2,0,0)}$ in the following.

\begin{proposition}\label{prop:SL52}
The following graphs constitute a maximal set of reducibly independent irreducible graphs for the action of $\SL_5$ on $W_{(0,n_2,0,0)}$, where in each case $1\leq iiE{1}<\iiE{2}<\ldots\leq n_2$  are pairwise different colors of the $2$-edges. Moreover, 
\begin{longtable}{rcrcrcrc}
\gS{1a}
&
\begin{tikzpicture}[baseline=(A),outer sep=0pt,inner sep=0pt]
\node (A) at (0,0) {};
\twoe{0}{1}
\twoed{0}{1}
\twoeABS{0}{0.7}{2}
\twoe{0.7}{1}
\twoed{0.7}{3}
\end{tikzpicture}
&
\gS{1b}
&
\begin{tikzpicture}[baseline=(A),outer sep=0pt,inner sep=0pt]
\node (A) at (0,0) {};
\twoe{0}{1}
\twoed{0}{1}
\twoeABS{0}{0.7}{2}
\twoe{0.7}{3}
\twoed{0.7}{3}
\end{tikzpicture}
&
\gS{1c}
&
\begin{tikzpicture}[baseline=(A),outer sep=0pt,inner sep=0pt]
\node (A) at (0,0) {};
\twoe{0}{1}
\twoed{0}{2}
\twoeABS{0}{0.7}{3}
\twoe{0.7}{1}
\twoed{0.7}{4}
\end{tikzpicture}
&
\gS{1d}
&
\begin{tikzpicture}[baseline=(A),outer sep=0pt,inner sep=0pt]
\node (A) at (0,0) {};
\twoe{0}{1}
\twoed{0}{2}
\twoeABS{0}{0.7}{3}
\twoe{0.7}{4}
\twoed{0.7}{5}
\end{tikzpicture}
\\ \ \\
\gS{2}%
&
\begin{tikzpicture}[baseline=(A),outer sep=0pt,inner sep=0pt]
\node (A) at (0,0.7) {};
\twoeARB{-1.05}{1.4}{1.05}{1.4}{2}{above=1pt};
\twoeARB{1.05}{1.4}{0}{0}{7}{below=4pt};
\twoeARB{-1.05}{1.4}{0}{0}{9}{below=4pt};
\twoeARB{1.05}{1.4}{0}{0.7}{4}{above=1pt};
\twoeARB{-1.05}{1.4}{0}{0.7}{6}{above=1pt};
\twoeARB{0}{0.7}{0}{0}{8}{right=1pt};
\twoep{-1.05}{1.4}{1}
\twoep{1.05}{1.4}{3}
\twoep{0}{0.7}{5}
\twoed{0}{10}
\end{tikzpicture}
&
\gS{3a}%
&
\begin{tikzpicture}[baseline=(A),outer sep=0pt,inner sep=0pt]
\node (A) at (0,0) {};
\twoeARB{0}{0}{0.7}{0}{1}{above=1pt};
\twoeARB{0.7}{0}{1.4}{0}{3}{above=1pt};
\twoeARB{0.7}{0}{0.7}{-0.7}{4}{right=1pt};
\twoetr{0.7}{-0.7}{6}
\twoetl{0.7}{-0.7}{5}
\twoe{0}{5}
\twoed{0}{5}

\twoe{0.7}{2}
\twoe{1.4}{7}
\twoed{1.4}{5}
\end{tikzpicture}
&
\gS{3b}%
&
\begin{tikzpicture}[baseline=(A),outer sep=0pt,inner sep=0pt]
\node (A) at (0,0) {};
\twoeARB{0}{0}{0.7}{0}{1}{above=1pt};
\twoeARB{0.7}{0}{1.4}{0}{3}{above=1pt};
\twoeARB{0.7}{0}{0.7}{-0.7}{4}{right=1pt};
\twoetr{0.7}{-0.7}{7}
\twoetl{0.7}{-0.7}{5}
\twoe{0}{5}
\twoed{0}{5}

\twoe{0.7}{2}
\twoe{1.4}{6}
\twoed{1.4}{6}
\end{tikzpicture}
&
\gS{3c}%
&
\begin{tikzpicture}[baseline=(A),outer sep=0pt,inner sep=0pt]
\node (A) at (0,0) {};
\twoeARB{0}{0}{0.7}{0}{1}{above=1pt};
\twoeARB{0.7}{0}{1.4}{0}{3}{above=1pt};
\twoeARB{0.7}{0}{0.7}{-0.7}{4}{right=1pt};
\twoetr{0.7}{-0.7}{6}
\twoetl{0.7}{-0.7}{6}
\twoe{0}{5}
\twoed{0}{5}

\twoe{0.7}{2}
\twoe{1.4}{7}
\twoed{1.4}{7}
\end{tikzpicture}
\\ \ \\
\gS{3d}%
&
\begin{tikzpicture}[baseline=(A),outer sep=0pt,inner sep=0pt]
\node (A) at (0,0) {};
\twoeARB{0}{0}{0.7}{0}{1}{above=1pt};
\twoeARB{0.7}{0}{1.4}{0}{3}{above=1pt};
\twoeARB{0.7}{0}{0.7}{-0.7}{4}{right=1pt};
\twoetr{0.7}{-0.7}{7}
\twoetl{0.7}{-0.7}{5}
\twoe{0}{5}
\twoed{0}{6}

\twoe{0.7}{2}
\twoe{1.4}{8}
\twoed{1.4}{5}
\end{tikzpicture}
&
\gS{3e}%
&
\begin{tikzpicture}[baseline=(A),outer sep=0pt,inner sep=0pt]
\node (A) at (0,0) {};
\twoeARB{0}{0}{0.7}{0}{1}{above=1pt};
\twoeARB{0.7}{0}{1.4}{0}{3}{above=1pt};
\twoeARB{0.7}{0}{0.7}{-0.7}{4}{right=1pt};
\twoetr{0.7}{-0.7}{8}
\twoetl{0.7}{-0.7}{6}
\twoe{0}{5}
\twoed{0}{6}

\twoe{0.7}{2}
\twoe{1.4}{5}
\twoed{1.4}{7}
\end{tikzpicture}
&
\gS{3f}%
&
\begin{tikzpicture}[baseline=(A),outer sep=0pt,inner sep=0pt]
\node (A) at (0,0) {};
\twoeARB{0}{0}{0.7}{0}{1}{above=1pt};
\twoeARB{0.7}{0}{1.4}{0}{3}{above=1pt};
\twoeARB{0.7}{0}{0.7}{-0.7}{4}{right=1pt};
\twoetr{0.7}{-0.7}{7}
\twoetl{0.7}{-0.7}{5}
\twoe{0}{5}
\twoed{0}{6}

\twoe{0.7}{2}
\twoe{1.4}{9}
\twoed{1.4}{8}
\end{tikzpicture}
&
\gS{3g}%
&
\begin{tikzpicture}[baseline=(A),outer sep=0pt,inner sep=0pt]
\node (A) at (0,0) {};
\twoeARB{0}{0}{0.7}{0}{1}{above=1pt};
\twoeARB{0.7}{0}{1.4}{0}{3}{above=1pt};
\twoeARB{0.7}{0}{0.7}{-0.7}{4}{right=1pt};
\twoetr{0.7}{-0.7}{8}
\twoetl{0.7}{-0.7}{7}
\twoe{0}{5}
\twoed{0}{6}

\twoe{0.7}{2}
\twoe{1.4}{10}
\twoed{1.4}{9}
\end{tikzpicture}
\end{longtable}
\end{proposition}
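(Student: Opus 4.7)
The plan is to adapt the strategy of Proposition~\ref{prop:maxset} to the $\SL_5$-setting restricted to $2$-edges. First, I would classify the normal forms of irreducible graphs by iteratively applying Lemma~\ref{le:redvirtdeg} to absorb edges into loops. Since each vertex has degree five and a looping $2$-edge contributes two, every vertex satisfies $2\ell + k = 5$ for loop count $\ell$ and non-loop connection count $k$; thus $(\ell,k) \in \{(0,5),(1,3),(2,1)\}$. A case analysis parallel to Lemma~\ref{le:redgraphs} then shows that the effective graph of an irreducible normal-form graph is either (i) a single edge on two vertices each of virtual degree one, giving the shapes of type~$1$; (ii) a $K_4$-configuration on four vertices each of virtual degree three, giving shape~$2$; or (iii) a star on four vertices with one central vertex of virtual degree three and three outer vertices of virtual degree one, giving the shapes of type~$3$. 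Graphs not fitting one of these shapes (in particular, those with more vertices) are reducible: the argument is the double-exchange trick from the proof of Proposition~\ref{prop:maxset}, namely apply Lemma~\ref{le:exch} twice to carefully chosen pairs of same-colored edges and use the color-permutation symmetries (the verbatim $\SL_5$-analogue of Lemma~\ref{le:tricks}) to derive $\Gamma \simeq_r -\Gamma$.

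For each admissible shape I would enumerate colorings modulo the sign action of color permutations on same-role edges. In shape~$1$, the four loops and one bridge yield essentially four distinct colorings, recorded as 1a--1d and distinguished by how many \emph{Pfaffian-like} pairs of same-color loops occur at a single vertex and whether a color is shared across the two vertices. In shape~$3$ the classification yields the seven representatives 3a--3g, distinguished by the coincidence pattern among the six loop-colors at the three outer vertices together with the loop-color at the central vertex. Completeness of these lists can be verified by combining Proposition~\ref{prop:maxcolors} with a direct check that each additional color coincidence forces the graph into an orbit already represented up to sign.

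Irreducibility and reducible independence of each listed graph $\Gamma$ would then be proved by exhibiting a linear map $\phi\colon G\to\CC$ that sends $\Gamma$ to a nonzero value, vanishes on every disconnected graph and on every graph not of the same shape-and-coloring orbit as $\Gamma$, and is compatible with every Exchange Lemma relation with $|u|=2$. By Proposition~\ref{prop:allid}, such a $\phi$ descends to $\mathcal{G}/\ker(U\circ\gamma)$ and certifies that $\Gamma \notin \ker(U\circ\gamma)$ and that $\Gamma$ is not reducibly equivalent to any linear combination of the other listed graphs. The signs of $\phi$ on the coloring orbit are dictated, as in the $\SL_4$-case, by the sign of the color permutation, and compatibility with Lemma~\ref{le:exch} must be verified on all shape/coloring combinations.

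The main obstacle will be shape~$2$: the $K_4$-configuration has six bridges and four loops, hence a rich symmetry group, and applying Lemma~\ref{le:exch} to any edge pair generates many terms of new shapes, so one must argue that every such new term is reducible; this requires an inductive use of the shape classification together with careful bookkeeping of sign contributions. A secondary obstacle is handling the color coincidences in the type~$3$ graphs, since the central vertex carries only one loop but is attached to three distinct outer vertices, each of which may or may not share a loop-color with another; the combinatorial accounting resembles that for the analogous form~$7$ in the $\SL_4$ proof but is substantially more involved, and a dimension count via the Hilbert series (obtained using Xin's algorithm as cited in the introduction) would serve as a useful sanity check on the final list.
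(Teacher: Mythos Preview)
Your overall strategy mirrors the paper's, but there is a genuine gap at the heart of your reducibility argument. The ``verbatim $\SL_5$-analogue of Lemma~\ref{le:tricks}'' does \emph{not} hold. For a vertex of type $(\ell,k)=(2,1)$ --- what the paper calls a \emph{black hole} --- you cannot swap the color of a looping $2$-edge with that of the unique non-looping one and pick up only a sign plus a reducible correction term; the extra loop obstructs the Exchange Lemma computation that made Lemma~\ref{le:tricks} work in the $\SL_4$ setting. Consequently your ``double-exchange trick to get $\Gamma\simeq_r -\Gamma$'' fails whenever black holes are present, and they are present in every shape except shape~$2$. The paper circumvents this by introducing \emph{degree-reducibility}: it isolates the subgraph $\Gamma_2$ spanned by the $(\ell,k)=(1,3)$ vertices, observes that color-permutation with sign holds there only modulo graphs of strictly smaller virtual-degree type, and then applies Proposition~\ref{prop:maxcolors} to $\Gamma_2$ to force either the $K_4$ determinant graph or at most nine $2$-edges in $\Gamma_2$.

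This also means your shape classification is incomplete. Once black holes cannot be freely recolored, you must run a case analysis on the number of $(1,3)$-vertices rather than on the total vertex count: one such vertex gives the star (your shape~(iii)), but two or three such vertices produce genuinely different six- and seven-vertex shapes that are \emph{not} of forms (i)--(iii) and must each be shown reducible by ad hoc Exchange Lemma arguments exploiting color coincidences forced by the nine-edge bound. Your proposal treats these as automatically covered by the double-exchange trick, which they are not. Finally, your concern about shape~$2$ is misplaced: the $K_4$ graph with ten distinct colors is exactly the determinant case of Proposition~\ref{prop:maxcolors}, so its irreducibility is immediate and there is nothing delicate there.
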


\begin{definition}
We say that a vertex is of type $\Ver_{k_1\cdots k_r}^{l_1\cdots l_s}$, if it has a looping $k_i$-edge for every $1\leq i\leq r$ and has one connection to a $l_j$-edge  for every $1\leq j\leq s$.
\end{definition}

\begin{definition}
The \emph{virtual degree type} $d(\Gamma)$ of a graph $\Gamma$ with $k$ vertices is the descending sequence $(d_1,\ldots,d_k)$ of virtual degrees of vertices of $\Gamma$. We  define a partial order on the set of graphs for the action of $\SL_n$ with $k$ vertices by setting 
\begin{align*}
\Gamma < \Gamma'
&:\Leftrightarrow
(d_1,\ldots,d_k)=d(\Gamma)< d(\Gamma')=(d'_1,\ldots,d'_k)
\\
&:\Leftrightarrow
d_1 \leq d_1',\ldots,d_{k-1} \leq d_{k-1}',d_k<d_k'.
\end{align*} 
We call a graphsum $\sum \Gamma_i$ \emph{degree-reducible}, if it is reducibly equivalent either to $0$ or to a graphsum $\sum \Gamma'_j$ with $d(\Gamma'_j) < d(\Gamma_i)$ for all $i$, $j$. 
Moreover, in analogy to Definition~\ref{def:red}, we say that graphsums $\Upsilon_1,\ldots,\Upsilon_N$ are \emph{degree-reducibly independent}, if a linear combination $\sum a_i\Upsilon_i$ is degree-reducible only if all $a_i$ are equal to zero. If for two graphsums $\Upsilon_1$, $\Upsilon_2$ the linear combination $\Upsilon_1-\Upsilon_2$ is degree-reducible, we call them \emph{degree-reducibly equivalent} and write $\Upsilon_1\simeq_d\Upsilon_2$. 
 We say that a set of degree-reducibly independent degree-irreducible graphs has property $(DI)$.
\end{definition}

\begin{lemma}\label{le:DIRI}
A maximal set with property $(DI)$ is also a maximal set with property $(RI)$, i.e. a set of reducibly independent irreducible graphs.
\end{lemma}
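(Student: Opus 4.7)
The plan is to leverage the fact that reducibility is a strictly stronger notion than degree-reducibility, so that $(DI)$ already implies $(RI)$ automatically at the level of individual graphs and of linear independence, and then to upgrade this containment to maximality by induction on the virtual degree type.

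First I would note that if a graphsum $\Upsilon$ is reducible, then in particular $\Upsilon\simeq_r 0$, which is the first alternative in the definition of degree-reducibility. Hence every reducible graphsum is degree-reducible. Taking contrapositives, degree-irreducible implies irreducible, and a degree-reducibly independent family is automatically reducibly independent. This already shows that any set $M$ with property $(DI)$ has property $(RI)$.

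The substantive step is to upgrade maximality. Assuming $M$ is maximal $(DI)$, I would aim to prove the following claim ($\ast$): for every graph $\Gamma$ there exist coefficients $b_i\in\CC$ and graphs $\Gamma_i\in M$ with $\Gamma\simeq_r\sum_i b_i\Gamma_i$. Granted ($\ast$), for any $\Gamma\notin M$ the combination $\Gamma-\sum_i b_i\Gamma_i$ is reducible and non-trivial, so $M\cup\{\Gamma\}$ fails $(RI)$; combined with the first step this gives the maximality of $M$ as an $(RI)$-set.

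I would establish ($\ast$) by strong induction on the virtual degree type $d(\Gamma)$, which is well-founded since its coordinates are non-negative integers bounded by $n$. If $\Gamma\in M$ nothing is to do. Otherwise maximality of $M$ forces some non-trivial linear combination $a\Gamma+\sum_i a_i\Gamma_i$ with $\Gamma_i\in M$ to be degree-reducible; the scalar $a$ cannot vanish, lest $M$ itself fail $(DI)$, so after rescaling the combination $\Gamma+\sum_i b_i\Gamma_i$ is degree-reducible. The two clauses of that definition split the argument into two cases: either the combination is $\simeq_r 0$, giving $\Gamma\simeq_r -\sum_i b_i\Gamma_i$ directly, or it is $\simeq_r\sum_j c_j\Gamma'_j$ with every $d(\Gamma'_j)$ strictly smaller than $d(\Gamma)$ and than each $d(\Gamma_i)$. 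In the latter case the induction hypothesis expresses every $\Gamma'_j$ as a combination from $M$, and substituting delivers ($\ast$) for $\Gamma$.

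The main obstacle, I anticipate, is the bookkeeping around the well-foundedness of the partial order on virtual degree types: the comparison $d(\Gamma'_j)<d(\Gamma_i)$ has to be applied sensibly between graphs with a common number of vertices, and I must make sure the induction respects this restriction so that the chain of reductions actually terminates. Once this combinatorial setup is pinned down, both the implication $(DI)\Rightarrow(RI)$ and the maximality upgrade follow mechanically from the two definitions.
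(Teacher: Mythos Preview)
Your proof is correct and rests on the same core idea as the paper's --- descent along the well-founded partial order on virtual degree types. The paper argues by contradiction, iteratively replacing a graph $\Gamma$ (for which $M\cup\{\Gamma\}$ still has $(RI)$) by one of strictly smaller degree type, whereas you recast this as a direct strong induction showing that every graph is $\simeq_r$ a combination from $M$; your organization is slightly cleaner, since it bypasses the paper's inner elimination step (showing that among the smaller-degree $\Gamma'_j$'s at least one can replace $\Gamma$ while preserving $(RI)$).
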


\begin{proof}
Let $M$ be a maximal set with property $(DI)$. Of course, $M$ has property $(RI)$. Assume $M$ is not maximal with that property. Then there is a graph $\Gamma$, so that $M'=M \cup \{\Gamma\}$ still has property $(RI)$, but not $(DI)$. Thus  $\Gamma+ \sum a_i \Gamma_i$ is degree-reducible for some $\Gamma_i\in M$.  So $\Gamma+ \sum a_i \Gamma_i \simeq_r \sum b_j \Gamma_j'$ with reducibly independent irreducible $\Gamma_j'$ so that $d(\Gamma_j')<d(\Gamma)$ for all $j$. 
Since $M'$ has property $(RI)$, not all $\Gamma_j'$ can be elements of $M'$. 
Take $\Gamma_k' \notin M'$ and assume $M \cup \{\Gamma_k'\}$ does not have property $(RI)$. 
Since $\Gamma_k'$ is irreducible, there must be a reducible sum $\sum c_i \Gamma_i + \Gamma_k'$ with not all $c_i$ equal to zero. So we find
$$
\Gamma+ \sum (a_i+b_ic_i) \Gamma_i  \simeq_r \sum_{j\neq k} b_j \Gamma_j'
$$
Thus there must be some $\Gamma_l'$ so that  $M''=M \cup \{\Gamma_k'\}$ has property $(RI)$ and $d(\Gamma_k')<d(\Gamma)$. Since $(0,\ldots,0)$ is a lower bound for the virtual degree type, iterating this procedure gives a contradiction.
\end{proof}

\begin{proof}[Proof of Proposition~\ref{prop:SL52}]
Due to Lemma~\ref{le:DIRI}, we only have to consider degree-irreducible graphs. Due to Lemma~\ref{le:redvirtdeg}, such a graph can be assumed to have two types of vertices: such with one - type $\Ver_{2}$ - and such with two looping $2$-edges - type $\Ver_{22}$ - , being of virtual degree three and one respectively.
A graph with a multiple edge is not necessarily reducible but degree-reducible, due to
\begin{center}
\begin{tikzpicture}[baseline=(A),outer sep=0pt,inner sep=0pt]
\node (A) at (0,0) {};
\twoe{0.7}{4};
\twoe{0}{1};
\twoeABS{0}{0.7}{2};
\twoeABfd{0}{0.7}{3}{0.3};
\draw[line width=0.5pt, draw=black,dotted] (0,0)-- (-0.35,0);
\draw[line width=0.5pt, draw=black, dotted] (1.05,0)-- (0.7,0);
\end{tikzpicture}
\gS{\simeq}
\begin{tikzpicture}[baseline=(A),outer sep=0pt,inner sep=0pt]
\node (A) at (0,0) {};
\twoe{0.7}{4};
\twoed{0.7}{3};
\twoe{0}{1};
\twoed{0}{2};
\draw[line width=0.5pt, draw=black,dotted] (0,0)-- (-0.35,0);
\draw[line width=0.5pt, draw=black, dotted] (1.05,0)-- (0.7,0);
\end{tikzpicture}
\gS{+}
\begin{tikzpicture}[baseline=(A),outer sep=0pt,inner sep=0pt]
\node (A) at (0,0) {};
\twoe{0.7}{4};
\twoed{0.7}{2};
\twoe{0}{1};
\twoed{0}{3};
\draw[line width=0.5pt, draw=black,dotted] (0,0)-- (-0.35,0);
\draw[line width=0.5pt, draw=black, dotted] (1.05,0)-- (0.7,0);
\end{tikzpicture}
\gS{+}
\begin{tikzpicture}[baseline=(A),outer sep=0pt,inner sep=0pt]
\node (A) at (0,0) {};
\twoe{0.7}{4};
\twoed{0.7}{1};
\twoe{0}{2};
\twoed{0}{3};
\draw[line width=0.5pt, draw=black,dotted] (0,0)-- (-0.35,0);
\draw[line width=0.5pt, draw=black, dotted] (1.05,0)-- (0.7,0);
\end{tikzpicture}
\gS{+}
\begin{tikzpicture}[baseline=(A),outer sep=0pt,inner sep=0pt]
\node (A) at (0,0) {};
\twoe{0.7}{4};
\twoe{0}{2};
\twoed{0}{3};
\twoeABS{0}{0.7}{1};
\draw[line width=0.5pt, draw=black,dotted] (0.7,0)-- (0.7,-0.35);
\draw[line width=0.5pt, draw=black, dotted] (1.05,0)-- (0.7,0);
\end{tikzpicture}
\gES{.}
\end{center}

\noindent
So we can exclude such graphs as well. We call vertices with two looping edges \emph{black holes}, because they 'absorb colors' in the sense that we can not interchange the colors of  the two looping edges with other edges' colors in the way we are used to from the $\SL_4$-case. Colors can only be extracted if two of the adjacent edges have the same color:
\begin{center}
\begin{tikzpicture}[baseline=(A),outer sep=0pt,inner sep=0pt]
\node (A) at (0,0) {};
\twoe{0}{1};
\twoed{0}{1};
\twoeABS{0}{0.7}{2};

\draw[line width=0.5pt, draw=black, dotted] (1.05,0.35)-- (0.7,0);
\draw[line width=0.5pt, draw=black, dotted] (1.05,-0.35)-- (0.7,0);

\draw[line width=0.5pt, draw=black, dotted] (0.7,-0.35)-- (0.7,0);
\draw[line width=0.5pt, draw=black, dotted] (0.7,0.35)-- (0.7,0);
\end{tikzpicture}
\gS{+}
\gS{2}
\begin{tikzpicture}[baseline=(A),outer sep=0pt,inner sep=0pt]
\node (A) at (0,0) {};
\twoe{0}{1};
\twoed{0}{2};
\twoeABS{0}{0.7}{1};

\draw[line width=0.5pt, draw=black, dotted] (1.05,0.35)-- (0.7,0);
\draw[line width=0.5pt, draw=black, dotted] (1.05,-0.35)-- (0.7,0);

\draw[line width=0.5pt, draw=black, dotted] (0.7,-0.35)-- (0.7,0);
\draw[line width=0.5pt, draw=black, dotted] (0.7,0.35)-- (0.7,0);
\end{tikzpicture}
\gS{\simeq 0}
\end{center}
On the other hand, if all three adjacent edges have the same color, the graph evaluates to zero under $U\circ \gamma$. So we exclude this case as well and first let $\Gamma$ have two vertices, then it clearly is of the form $1$ from the proposition and we get the relevant colorings by evaluating all other non-equivalent colorings to zero.

Let now $\Gamma$ have four or more vertices and $\Gamma_2$ be the subgraph consisting of vertices of type $\mathcal{V}_2$ and all edges with a connection to one of these vertices. Let $\Gamma_{\sigma,2}$ be the graph $\Gamma$ with the colors inside $\Gamma_2$ permuted by $\sigma$. Then similar as in the $\SL_4$-case, but now with degree-reducibly equivalence, we get $\Gamma\simeq_d\sgn(\sigma)\Gamma_{\sigma,2}$, due to
\begin{center}
\begin{tikzpicture}[baseline=(A),outer sep=0pt,inner sep=0pt]
\node (A) at (0,0) {};
\twoe{0}{1};
\twoeABS{0}{0.7}{2};
\draw[line width=0.5pt, draw=black,dotted] (0,0)-- (-0.35,0);
\draw[line width=0.5pt, draw=black, dotted] (1.05,0.35)-- (0.7,0);
\draw[line width=0.5pt, draw=black, dotted] (1.05,-0.35)-- (0.7,0);
\draw[line width=0.5pt, draw=black,dotted] (0,0)-- (0,-0.35);
\draw[line width=0.5pt, draw=black, dotted] (0.7,-0.35)-- (0.7,0);
\draw[line width=0.5pt, draw=black, dotted] (0.7,0.35)-- (0.7,0);
\end{tikzpicture}
\gS{+}
\begin{tikzpicture}[baseline=(A),outer sep=0pt,inner sep=0pt]
\node (A) at (0,0) {};
\twoe{0}{2};
\twoeABS{0}{0.7}{1};
\draw[line width=0.5pt, draw=black,dotted] (0,0)-- (-0.35,0);
\draw[line width=0.5pt, draw=black, dotted] (1.05,0.35)-- (0.7,0);
\draw[line width=0.5pt, draw=black, dotted] (1.05,-0.35)-- (0.7,0);
\draw[line width=0.5pt, draw=black,dotted] (0,0)-- (0,-0.35);
\draw[line width=0.5pt, draw=black, dotted] (0.7,-0.35)-- (0.7,0);
\draw[line width=0.5pt, draw=black, dotted] (0.7,0.35)-- (0.7,0);
\end{tikzpicture}
\gS{+}
\gS{\sum}
\begin{tikzpicture}[baseline=(A),outer sep=0pt,inner sep=0pt]
\node (A) at (0,0) {};
\twoe{0}{1};
\twoed{0}{2};
\draw[line width=0.5pt, draw=black,dotted] (0,0)-- (-0.35,0);
\draw[line width=0.5pt, draw=black, dotted] (1.05,0.35)-- (0.7,0);
\draw[line width=0.5pt, draw=black, dotted] (1.05,-0.35)-- (0.7,0);
\draw[line width=0.5pt, draw=black, dotted] (0.35,0)-- (0.7,0);
\draw[line width=0.5pt, draw=black, dotted] (0.7,-0.35)-- (0.7,0);
\draw[line width=0.5pt, draw=black, dotted] (0.7,0.35)-- (0.7,0);
\draw[black, fill=black] (0.7,0) circle [radius=1.5pt];
\end{tikzpicture}
\gS{\simeq 0}
\gES{.}
\end{center}

\noindent
Thus we can use Proposition~\ref{prop:maxcolors} to conclude that either  $\Gamma$ has four vertices and $\Gamma_2$ ten edges or $\Gamma_2$ has  at most $9=\tbinom{5}{2}-1$ edges. In the first case, $\Gamma_{\eff}$ is the simple cubic connected graph $K_4$ and we find Graph $2$ from the proposition. 
In the second case, we distinguish between the number of vertices of $\Gamma_2$:

\medskip

\noindent
\textbf{Case 1: $\Gamma_2$ has one vertex.} Here $\Gamma$ must be of the form:
\begin{center}
 \begin{tikzpicture}[baseline=(A),outer sep=0pt,inner sep=0pt]
\node (A) at (0,0) {};
\twoeARB{0}{0}{0.7}{0}{1}{above=1pt};
\twoeARB{0.7}{0}{1.4}{0}{3}{above=1pt};
\twoeARB{0.7}{0}{0.7}{-0.7}{4}{right=1pt};
\twoetr{0.7}{-0.7}{}
\twoetl{0.7}{-0.7}{}
\twoe{0}{}
\twoed{0}{}
\twoe{0.7}{2}
\twoe{1.4}{}
\twoed{1.4}{}
\end{tikzpicture}
\end{center}

\noindent
If this graph is reducible for any coloring, it must be reducible for a coloring where the remaining looping edges are of colors $\iiE{1},\ldots,\iiE{9}$. If the two looping edges of one black hole are $\iiE{1},\ldots,\iiE{4}$-colored, the graph is reducible. If one looping edge of a black hole is $\iiE{1},\ldots,\iiE{4}$-colored, say $\iiE{1}$, and the other $\iiE{5},\ldots,\iiE{9}$-colored, by moving the $\iiE{1}$-colored edge of $\Gamma_2$ to the black hole, this graph is reducibly equivalent to the respective one with two looping edges of color $\iiE{1}$ at the black hole and one edge of color $\iiE{5}$ in $\Gamma_2$. So by swapping colors $\iiE{1}$ and $\iiE{5}$, we can assume that the looping edges of black holes are $\iiE{5},\ldots,\iiE{9}$-colored.
If the four looping edges of two black holes are colored with only one color, the graph is reducible by moving one of the colored edges of the first to the second black hole. If two black holes each have colored their looping edges  with the same two colors, say $\iiE{5}$ and $\iiE{6}$, then we get
\begin{longtable}{rl}
\begin{tikzpicture}[baseline=(A),outer sep=0pt,inner sep=0pt]
\node (A) at (0,0) {};
\twoe{0}{5};
\twoed{0}{6};
\twoe{0.7}{2};
\twoeARB{0.7}{0}{0.7}{-0.7}{4}{right=1pt};
\twoetr{0.7}{-0.7}{5}
\twoetl{0.7}{-0.7}{6}
\twoeABS{0}{0.7}{1}
\draw[line width=0.5pt, draw=black, dotted] (1.05,0)-- (0.7,0);
\end{tikzpicture}
&
\gS{\simeq_r}
\gS{-}
\begin{tikzpicture}[baseline=(A),outer sep=0pt,inner sep=0pt]
\node (A) at (0,0) {};
\twoe{0}{1};
\twoed{0}{6};
\twoe{0.7}{2};
\twoeARB{0.7}{0}{0.7}{-0.7}{4}{right=1pt};
\twoetr{0.7}{-0.7}{5}
\twoetl{0.7}{-0.7}{6}
\twoeABS{0}{0.7}{5}
\draw[line width=0.5pt, draw=black, dotted] (1.05,0)-- (0.7,0);
\end{tikzpicture}
\gS{-}
\begin{tikzpicture}[baseline=(A),outer sep=0pt,inner sep=0pt]
\node (A) at (0,0) {};
\twoe{0}{1};
\twoed{0}{5};
\twoe{0.7}{2};
\twoeARB{0.7}{0}{0.7}{-0.7}{4}{right=1pt};
\twoetr{0.7}{-0.7}{5}
\twoetl{0.7}{-0.7}{6}
\twoeABS{0}{0.7}{6}
\draw[line width=0.5pt, draw=black, dotted] (1.05,0)-- (0.7,0);
\end{tikzpicture}
\gS{\simeq_r}
\gS{\frac{1}{2}}
\begin{tikzpicture}[baseline=(A),outer sep=0pt,inner sep=0pt]
\node (A) at (0,0) {};
\twoe{0}{1};
\twoed{0}{6};
\twoe{0.7}{4};
\twoeARB{0.7}{0}{0.7}{-0.7}{6}{right=1pt};
\twoetr{0.7}{-0.7}{5}
\twoetl{0.7}{-0.7}{5}
\twoeABS{0}{0.7}{2}
\draw[line width=0.5pt, draw=black, dotted] (1.05,0)-- (0.7,0);
\end{tikzpicture}
\gS{+ \frac{1}{2}}
\begin{tikzpicture}[baseline=(A),outer sep=0pt,inner sep=0pt]
\node (A) at (0,0) {};
\twoe{0}{1};
\twoed{0}{5};
\twoe{0.7}{4};
\twoeARB{0.7}{0}{0.7}{-0.7}{5}{right=1pt};
\twoetr{0.7}{-0.7}{6}
\twoetl{0.7}{-0.7}{6}
\twoeABS{0}{0.7}{2}
\draw[line width=0.5pt, draw=black, dotted] (1.05,0)-- (0.7,0);
\end{tikzpicture}
\\ \ \\
&
\gS{\simeq_r}
\gS{\frac{1}{4}}
\begin{tikzpicture}[baseline=(A),outer sep=0pt,inner sep=0pt]
\node (A) at (0,0) {};
\twoe{0}{6};
\twoed{0}{6};
\twoe{0.7}{2};
\twoeARB{0.7}{0}{0.7}{-0.7}{4}{right=1pt};
\twoetr{0.7}{-0.7}{5}
\twoetl{0.7}{-0.7}{5}
\twoeABS{0}{0.7}{1}
\draw[line width=0.5pt, draw=black, dotted] (1.05,0)-- (0.7,0);
\end{tikzpicture}
\gS{+ \frac{1}{4}}
\begin{tikzpicture}[baseline=(A),outer sep=0pt,inner sep=0pt]
\node (A) at (0,0) {};
\twoe{0}{5};
\twoed{0}{5};
\twoe{0.7}{2};
\twoeARB{0.7}{0}{0.7}{-0.7}{4}{right=1pt};
\twoetr{0.7}{-0.7}{6}
\twoetl{0.7}{-0.7}{6}
\twoeABS{0}{0.7}{1}
\draw[line width=0.5pt, draw=black, dotted] (1.05,0)-- (0.7,0);
\end{tikzpicture}
\gES{,}
\end{longtable}
\noindent
and any such graph is reducible. The only remaining possible form for a $\iiE{1},\ldots,\iiE{6}$-colored graph is reducible as well:
\begin{center}
 \begin{tikzpicture}[baseline=(A),outer sep=0pt,inner sep=0pt]
\node (A) at (0,0) {};
\twoeARB{0}{0}{0.7}{0}{1}{above=1pt};
\twoeARB{0.7}{0}{1.4}{0}{3}{above=1pt};
\twoeARB{0.7}{0}{0.7}{-0.7}{4}{right=1pt};
\twoetr{0.7}{-0.7}{5}
\twoetl{0.7}{-0.7}{6}
\twoe{0}{5}
\twoed{0}{5}
\twoe{0.7}{2}
\twoe{1.4}{6}
\twoed{1.4}{6}
\end{tikzpicture}
\gS{\simeq}
\gS{-}
\begin{tikzpicture}[baseline=(A),outer sep=0pt,inner sep=0pt]
\node (A) at (0,0) {};
\twoeARB{0}{0}{0.7}{0}{1}{above=1pt};
\twoeARB{0.7}{0}{1.4}{0}{3}{above=1pt};
\twoeARB{0.7}{0}{0.7}{-0.7}{5}{right=1pt};
\twoetr{0.7}{-0.7}{4}
\twoetl{0.7}{-0.7}{6}
\twoe{0}{5}
\twoed{0}{5}
\twoe{0.7}{2}
\twoe{1.4}{6}
\twoed{1.4}{6}
\end{tikzpicture}
\gS{-}
\begin{tikzpicture}[baseline=(A),outer sep=0pt,inner sep=0pt]
\node (A) at (0,0) {};
\twoeARB{0}{0}{0.7}{0}{1}{above=1pt};
\twoeARB{0.7}{0}{1.4}{0}{3}{above=1pt};
\twoeARB{0.7}{0}{0.7}{-0.7}{6}{right=1pt};
\twoetr{0.7}{-0.7}{4}
\twoetl{0.7}{-0.7}{5}
\twoe{0}{5}
\twoed{0}{5}
\twoe{0.7}{2}
\twoe{1.4}{6}
\twoed{1.4}{6}
\end{tikzpicture}
\end{center}
\noindent
For graphs with more than six colors, we get the reducibly independent possibilities $3a$-$3g$.

\medskip

\noindent
\textbf{Case 2: $\Gamma_2$ has two vertices.} Then $\Gamma$ must have six and is of the form 
\begin{center}
\begin{tikzpicture}[baseline=(A),outer sep=0pt,inner sep=0pt]
\node (A) at (0,0) {};
\twoe{0}{};
\twoed{0}{};
\twoe{2.4}{};
\twoed{2.4}{};
\twoe{0.7}{2};
\twoe{1.7}{4};
\twoeARB{0.7}{0}{0.7}{-0.7}{6}{right=1pt};
\twoetr{0.7}{-0.7}{}
\twoetl{0.7}{-0.7}{}
\twoeARB{1.7}{0}{1.7}{-0.7}{7}{right=1pt};
\twoetr{1.7}{-0.7}{}
\twoetl{1.7}{-0.7}{}
\twoeABS{0.7}{1.7}{3}
\twoeABS{0}{0.7}{1}
\twoeABS{1.7}{2.4}{5}
\end{tikzpicture}
\end{center}
\noindent
Due to the considerations from the previous case, this graph must be irreducible for some coloring of the remaining edges with colors $\iiE{8},\iiE{9}$. But any such graph is reducible. 

\medskip

\noindent
\textbf{Case 3: $\Gamma_2$ has three vertices.} It either contains more than nine edges, or $\Gamma$ must be of the form
\begin{center}
\begin{tikzpicture}[baseline=(A),outer sep=0pt,inner sep=0pt]
\node (A) at (0,0) {};
\twoe{0}{};
\twoed{0}{};
\twoed{0.7}{2};
\twoeABS{0}{0.7}{1}
\twoeABS{0.7}{1.4}{3}
\twoe{1.4}{4};
\twoed{2.1}{6};
\twoeABS{1.4}{2.1}{5}
\twoeABS{2.1}{2.8}{7}
\twoe{2.8}{};
\twoed{2.8}{};
\twoeABfu{0.7}{2.1}{8}{1}
\twoeARB{1.4}{0}{1.4}{-0.7}{9}{right=1pt};
\twoetr{1.4}{-0.7}{}
\twoetl{1.4}{-0.7}{}
\end{tikzpicture}
\gES{.}
\end{center}
\noindent
But this graph is reducible for all the remaining edges colored with colors $\iiE{1},\ldots,\iiE{9}$. So it is reducible for any coloring.

\medskip

\noindent
\textbf{Case 4: $\Gamma_2$  has four or more vertices.} Then it contains more than nine edges, which is a contradiction. The proof is complete.
\end{proof}

\begin{proof}[Proof of Theorem~\ref{th:fftSL5}]
We only consider degree-irreducible graphs and do not list explicit colorings. We also can assume  that the number of $2$-edges is greater or equal to the number of $3$-edges due to the duality of $\BigWedge^k V$ and $\BigWedge^{n-k} V$.
First of all, we have vertices of the type $\Ver_{11111}$, these are a connected component.
Besides, there are vertices of virtual degree one of the types $\Ver_4$, $\Ver_{22}$, $\Ver_{13}$, and $\Ver_{112}$. We have vertices of virtual degree two of the types $\Ver_3$ and $\Ver_{21}$. Lastly, we have vertices of virtual degree three of the type $\Ver_2$. We have no multiple $2$-edges due to degree-irreducibility.

All possible graphs with one vertex are irreducible for a suitable choice of colors. Those with two either have a non-looping $2$-edge and any combination of the types $\Ver_4$, $\Ver_{22}$, $\Ver_{13}$, and $\Ver_{112}$, or they have two $2$-edges and two $3$-edges and due to degree-irreducibility, the form of such graph is unique.

So let $\Gamma$ have three or more vertices.
We can assume that there is no non-looping $4$-edge, since if there is one and it is only connected to vertices with looping $4$-edges, this constitues a connected component with mirror the graph with one vertex of the type $\Ver_{11111}$, on the other hand, if it is connected to a vertex without a looping $4$-edge, we can pull it over to this vertex. 

If there is a vertex of type $\Ver_2$, $\Ver_{12}$ or $\Ver_{112}$, then there is no non-looping $3$-edge due to degree-irreducibility. All $2$-edges but the looping ones of  black holes $\Ver_{22}$ can be permuted and we have $\Gamma_\sigma\simeq_d\sgn(\sigma)\Gamma$ as usual. 

\medskip

\noindent
\textbf{Case 1: $\Gamma$ has a vertex of type $\Ver_2$.}
Here in principle, all graphs stem from those from the proof of Proposition~\ref{prop:SL52}, with three possible modifications. Firstly, vertices of type $\Ver_{22}$ can be replaced by such of types $\Ver_{4}$, $\Ver_{13}$ or $\Ver_{112}$. Secondly, arms or cycles can be prolonged by inserting vertices of type $\Ver_3$ and $\Ver_{12}$, and lastly, two arms can be connected to a cycle by replacing the two 'end-vertices' with one vertex of type $\Ver_3$ or $\Ver_{12}$. The number of $2$-vertices here  is always bounded by Proposition~\ref{prop:maxcolors}. We can assume that vertices of types $\Ver_3$ and $\Ver_{12}$ only are on two of three sides of a vertex of type $\Ver_2$ by the following:
\begin{center}
\begin{tikzpicture}[baseline=(A),outer sep=0pt,inner sep=0pt]
\node (A) at (0,0) {};
\twoeABS{0}{0.7}{1};
\twoeABS{0.7}{1.4}{3};
\twoeABS{1.4}{2.1}{4};
\twoe{0.7}{2};
\thre{1.4}{1};
\twoeARB{0.7}{0}{0.7}{-0.7}{5}{right=1pt};
\end{tikzpicture}
\gS{\simeq_r}
\gS{-}
\begin{tikzpicture}[baseline=(A),outer sep=0pt,inner sep=0pt]
\node (A) at (0,0) {};
\twoeABS{0}{0.7}{1};
\twoeABS{1.4}{2.1}{4};
\twoe{0.7}{2};
\threABS{0.7}{1.4}{1}
\twoe{1.4}{3};
\twoeARB{0.7}{0}{0.7}{-0.7}{5}{right=1pt};
\end{tikzpicture}
\gS{\simeq_d}
\gS{-}
\begin{tikzpicture}[baseline=(A),outer sep=0pt,inner sep=0pt]
\node (A) at (0,0) {};
\twoeABS{0}{0.7}{1};
\twoeABS{1.4}{2.1}{4};
\thre{0.7}{1};
\twoeABS{0.7}{1.4}{2}
\twoe{1.4}{3};
\twoeARB{1.4}{0}{0.7}{-0.7}{5}{right=2pt};
\end{tikzpicture}
\gS{-}
\begin{tikzpicture}[baseline=(A),outer sep=0pt,inner sep=0pt]
\node (A) at (0,0) {};
\twoeABS{1.4}{2.1}{4};
\thre{0.7}{3};
\twoeABS{0.7}{1.4}{2}
\twoed{1.4}{3};
\twoeARB{0.7}{0}{0.7}{-0.7}{5}{right=1pt};
\twoeABfu{0}{1.4}{1}{0.9}
\end{tikzpicture}
\gES{,}
\\ \ \\ 
\begin{tikzpicture}[baseline=(A),outer sep=0pt,inner sep=0pt]
\node (A) at (0,0) {};
\twoeABS{0}{0.7}{1};
\twoeABS{0.7}{1.4}{3};
\twoeABS{1.4}{2.1}{5};
\twoe{0.7}{2};
\twoe{1.4}{4};
\onee{1.4}{1};
\twoeARB{0.7}{0}{0.7}{-0.7}{6}{left=1pt};
\end{tikzpicture}
\gS{\simeq_d}
\gS{-}
\begin{tikzpicture}[baseline=(A),outer sep=0pt,inner sep=0pt]
\node (A) at (0,0) {};
\twoeABS{0}{0.7}{1};
\twoeABS{0.7}{1.4}{3};
\twoeABS{1.4}{2.1}{5};
\twoe{0.7}{2};
\twoe{1.4}{4};
\onee{0.7}{1};
\twoeARB{1.4}{0}{0.7}{-0.7}{6}{right=2pt};
\end{tikzpicture}
\gS{-}
\begin{tikzpicture}[baseline=(A),outer sep=0pt,inner sep=0pt]
\node (A) at (0,0) {};
\twoeABS{0}{0.7}{1};
\twoeABS{0.7}{1.4}{3};
\draw[black, fill=black] (2.1,0) circle [radius=1.5pt];
\draw[line width=0.5pt, draw=black]  (2.1,0) .. controls (1.4,-1.2) and (0,-1.2) .. (0.7,0) node[font=\tiny, midway, below=1pt] {$\iiE{5}$} ;
\twoe{0.7}{2};
\twoe{1.4}{4};
\onee{1.4}{1};
\twoeARB{1.4}{0}{0.7}{-0.7}{6}{left=2pt};
\end{tikzpicture}
\gES{.}
\end{center}
Moreover, a graph with two vertices of types $\Ver_3$ and $\Ver_{12}$ joined by a $2$-edge and the graph with these two vertices swapped differ degree-reducibly by a graph with an additional vertex of type $\Ver_2$:
\begin{center}
\begin{tikzpicture}[baseline=(A),outer sep=0pt,inner sep=0pt]
\node (A) at (0,0) {};
\twoeABS{0}{0.7}{1};
\twoeABS{0.7}{1.4}{3};
\twoeABS{1.4}{2.1}{4};
\twoe{0.7}{2};
\onee{0.7}{1}
\thre{1.4}{1};
\end{tikzpicture}
\gS{\simeq_r}
\gS{-}
\begin{tikzpicture}[baseline=(A),outer sep=0pt,inner sep=0pt]
\node (A) at (0,0) {};
\twoeABS{0}{0.7}{1};
\twoeABS{1.4}{2.1}{4};
\twoe{0.7}{2};
\onee{0.7}{1}
\threABS{0.7}{1.4}{1}
\twoe{1.4}{3};
\end{tikzpicture}
\gS{\simeq_d}
\gS{-}
\begin{tikzpicture}[baseline=(A),outer sep=0pt,inner sep=0pt]
\node (A) at (0,0) {};
\twoeABS{0}{0.7}{1};
\twoeABS{1.4}{2.1}{4};
\thre{0.7}{1};
\twoeABS{0.7}{1.4}{2}
\twoe{1.4}{3};
\onee{1.4}{1}
\end{tikzpicture}
\gS{-}
\begin{tikzpicture}[baseline=(A),outer sep=0pt,inner sep=0pt]
\node (A) at (0,0) {};
\twoeABS{1.4}{2.1}{4};
\thre{0.7}{3};
\twoeABS{0.7}{1.4}{2}
\twoed{1.4}{3};
\onee{0.7}{1}
\twoeABfu{0}{1.4}{1}{0.9}
\end{tikzpicture}
\gES{.}
\end{center}
\noindent
This graph is either reducible or is considered in the list of generators as well, so we can in fact swap  two such vertices.
This directly leads to the graphs from the Theorem.

\medskip

\noindent
\textbf{Case 2: $\Gamma$ has no vertex of type $\Ver_2$ and no non-looping $3$-edge.}
In this case, the only non-looping edges are still those of size two. But now, we only have vertices of virtual degree one and two. Thus we have two types: chains and cycles.

\medskip

\noindent
\textbf{Case 3: $\Gamma$ has a non-looping $3$-edge.} We have no vertices of types $\Ver_2$, $\Ver_{12}$ or $\Ver_{112}$. Assume a non-looping $3$-edge of $\Gamma$ has two connections to one vertex, then at this vertex due to degree-irreducibility, there is a looping $3$-edge. At the second vertex connected to the non-looping $3$-edge, there must be a looping $3$-edge as well. Thus this part of the graph must have the looks
\begin{center}
\begin{tikzpicture}[baseline=(A),outer sep=0pt,inner sep=0pt]
\node (A) at (0,0) {};
\thre{0.7}{};
\thre{0}{};
\threABS{0}{0.7}{}
\draw[line width=0.5pt, draw=black, dotted] (-0.35,0)-- (0,0);
\end{tikzpicture}
\gES{.}
\end{center}
If $\Gamma$ has more than one cycle, the number of $3$-edges exceeds the number of $2$-edges. So first assume $\Gamma$ has no cycle. By
\begin{center}
\begin{tikzpicture}[baseline=(A),outer sep=0pt,inner sep=0pt]
\node (A) at (0,0) {};
\thretr{0.7}{-0.7}{2}
\threABC{0}{0}{1.4}{0}{0.7}{-0.7}{1}
\draw[line width=0.5pt, draw=black, dotted] (1.7,0.3)-- (1.4,0);
\draw[line width=0.5pt, draw=black, dotted] (1.7,-0.3)-- (1.4,0);
\draw[line width=0.5pt, draw=black, dotted] (1.4,-0.3)-- (1.4,0);
\draw[line width=0.5pt, draw=black, dotted] (1.4,0.3)-- (1.4,0);
\draw[line width=0.5pt, draw=black, dotted] (-0.3,0.3)-- (0,0);
\draw[line width=0.5pt, draw=black, dotted] (-0.3,-0.3)-- (0,0);
\draw[line width=0.5pt, draw=black, dotted] (0,-0.3)-- (0,0);
\draw[line width=0.5pt, draw=black, dotted] (0,0.3)-- (0,0);
\draw[line width=0.5pt, draw=black, dotted] (0.7,-1)-- (0.7,-0.7);
\end{tikzpicture}
\gS{\simeq_r}
\gS{-}
\begin{tikzpicture}[baseline=(A),outer sep=0pt,inner sep=0pt]
\node (A) at (0,0) {};
\threABARB{1.4}{0}{0.7}{-0.7}{2}{0}{-0.2}
\threABARB{0}{0}{0.7}{-0.7}{1}{0}{-0.2}
\draw[line width=0.5pt, draw=black, dotted] (1.7,0.3)-- (1.4,0);
\draw[line width=0.5pt, draw=black, dotted] (1.7,-0.3)-- (1.4,0);
\draw[line width=0.5pt, draw=black, dotted] (1.4,-0.3)-- (1.4,0);
\draw[line width=0.5pt, draw=black, dotted] (1.4,0.3)-- (1.4,0);
\draw[line width=0.5pt, draw=black, dotted] (-0.3,0.3)-- (0,0);
\draw[line width=0.5pt, draw=black, dotted] (-0.3,-0.3)-- (0,0);
\draw[line width=0.5pt, draw=black, dotted] (0,-0.3)-- (0,0);
\draw[line width=0.5pt, draw=black, dotted] (0,0.3)-- (0,0);
\draw[line width=0.5pt, draw=black, dotted] (0.7,-1)-- (0.7,-0.7);
\end{tikzpicture}
\gS{\simeq_r}
\begin{tikzpicture}[baseline=(A),outer sep=0pt,inner sep=0pt]
\node (A) at (0,0) {};
\thretr{0.7}{-0.7}{1}
\threABC{0}{0}{1.4}{0}{0.7}{-0.7}{2}
\draw[line width=0.5pt, draw=black, dotted] (1.7,0.3)-- (1.4,0);
\draw[line width=0.5pt, draw=black, dotted] (1.7,-0.3)-- (1.4,0);
\draw[line width=0.5pt, draw=black, dotted] (1.4,-0.3)-- (1.4,0);
\draw[line width=0.5pt, draw=black, dotted] (1.4,0.3)-- (1.4,0);
\draw[line width=0.5pt, draw=black, dotted] (-0.3,0.3)-- (0,0);
\draw[line width=0.5pt, draw=black, dotted] (-0.3,-0.3)-- (0,0);
\draw[line width=0.5pt, draw=black, dotted] (0,-0.3)-- (0,0);
\draw[line width=0.5pt, draw=black, dotted] (0,0.3)-- (0,0);
\draw[line width=0.5pt, draw=black, dotted] (0.7,-1)-- (0.7,-0.7);
\end{tikzpicture}
\gES{,}
\end{center}
\noindent 
we can swap  $3$-edges adjacent to a vertex. In fact, we see that if both have the same color, the graph evaluates to zero since changing shadings of two $k$-edges for odd $k$ results in reversed sign. Moreover, we can assume $2$-edges to be only on two sides of a non-looping $3$-edge due to 
\begin{longtable}{rl}
&
\begin{tikzpicture}[baseline=(A),outer sep=0pt,inner sep=0pt]
\node (A) at (0,0) {};
\thre{0}{1}
\twoeABS{-0.7}{0}{1}
\threABC{0}{0}{0.7}{0}{0.35}{-0.7}{2}
\draw[line width=0.5pt, draw=black, dotted] (1,0.3)-- (0.7,0);
\draw[line width=0.5pt, draw=black, dotted] (1,-0.3)-- (0.7,0);
\draw[line width=0.5pt, draw=black, dotted] (0.7,-0.3)-- (0.7,0);
\draw[line width=0.5pt, draw=black, dotted] (0.7,0.3)-- (0.7,0);
\draw[line width=0.5pt, draw=black, dotted] (-1,0.3)-- (-0.7,0);
\draw[line width=0.5pt, draw=black, dotted] (-1,-0.3)-- (-0.7,0);
\draw[line width=0.5pt, draw=black, dotted] (-0.7,-0.3)-- (-0.7,0);
\draw[line width=0.5pt, draw=black, dotted] (-0.7,0.3)-- (-0.7,0);

\draw[line width=0.5pt, draw=black, dotted] (0.05,-0.7)-- (0.35,-0.7);
\draw[line width=0.5pt, draw=black, dotted] (0.65,-0.7)-- (0.35,-0.7);
\draw[line width=0.5pt, draw=black, dotted] (0.05,-1)-- (0.35,-0.7);
\draw[line width=0.5pt, draw=black, dotted] (0.65,-1)-- (0.35,-0.7);
\end{tikzpicture}
\gS{\simeq_r}
\gS{-}
\begin{tikzpicture}[baseline=(A),outer sep=0pt,inner sep=0pt]
\node (A) at (0,0) {};
\twoe{0}{1}
\threABS{-0.7}{0}{1}
\threABC{0}{0}{0.7}{0}{0.35}{-0.7}{2}
\draw[line width=0.5pt, draw=black, dotted] (1,0.3)-- (0.7,0);
\draw[line width=0.5pt, draw=black, dotted] (1,-0.3)-- (0.7,0);
\draw[line width=0.5pt, draw=black, dotted] (0.7,-0.3)-- (0.7,0);
\draw[line width=0.5pt, draw=black, dotted] (0.7,0.3)-- (0.7,0);
\draw[line width=0.5pt, draw=black, dotted] (-1,0.3)-- (-0.7,0);
\draw[line width=0.5pt, draw=black, dotted] (-1,-0.3)-- (-0.7,0);
\draw[line width=0.5pt, draw=black, dotted] (-0.7,-0.3)-- (-0.7,0);
\draw[line width=0.5pt, draw=black, dotted] (-0.7,0.3)-- (-0.7,0);

\draw[line width=0.5pt, draw=black, dotted] (0.05,-0.7)-- (0.35,-0.7);
\draw[line width=0.5pt, draw=black, dotted] (0.65,-0.7)-- (0.35,-0.7);
\draw[line width=0.5pt, draw=black, dotted] (0.05,-1)-- (0.35,-0.7);
\draw[line width=0.5pt, draw=black, dotted] (0.65,-1)-- (0.35,-0.7);
\end{tikzpicture}
\gS{\simeq_r}
\begin{tikzpicture}[baseline=(A),outer sep=0pt,inner sep=0pt]
\node (A) at (0,0) {};
\twoeABS{0}{0.7}{1}
\threABS{-0.7}{0}{1}
\threABARB{0.35}{-0.7}{0}{0}{2}{-0.2}{0}
\draw[line width=0.5pt, draw=black, dotted] (1,0.3)-- (0.7,0);
\draw[line width=0.5pt, draw=black, dotted] (1,-0.3)-- (0.7,0);
\draw[line width=0.5pt, draw=black, dotted] (0.7,-0.3)-- (0.7,0);
\draw[line width=0.5pt, draw=black, dotted] (0.7,0.3)-- (0.7,0);
\draw[line width=0.5pt, draw=black, dotted] (-1,0.3)-- (-0.7,0);
\draw[line width=0.5pt, draw=black, dotted] (-1,-0.3)-- (-0.7,0);
\draw[line width=0.5pt, draw=black, dotted] (-0.7,-0.3)-- (-0.7,0);
\draw[line width=0.5pt, draw=black, dotted] (-0.7,0.3)-- (-0.7,0);

\draw[line width=0.5pt, draw=black, dotted] (0.05,-0.7)-- (0.35,-0.7);
\draw[line width=0.5pt, draw=black, dotted] (0.65,-0.7)-- (0.35,-0.7);
\draw[line width=0.5pt, draw=black, dotted] (0.05,-1)-- (0.35,-0.7);
\draw[line width=0.5pt, draw=black, dotted] (0.65,-1)-- (0.35,-0.7);
\end{tikzpicture}
\gS{+}
\begin{tikzpicture}[baseline=(A),outer sep=0pt,inner sep=0pt]
\node (A) at (0,0) {};
\twoe{0}{1}
\draw[line width=0.5pt, draw=black]  (-0.35,0) .. controls (-0.35,1) and (0.35,1) .. (0.7,0);
\draw[line width=0.5pt, draw=black] (-0.7,0)-- (0,0) node[font=\tiny, midway, below=2pt] {$\iiiE{1}$};
\draw[black, fill=black] (0.7,0) circle [radius=1.5pt];
\draw[black, fill=black] (-0.7,0) circle [radius=1.5pt];
\threABARB{0.35}{-0.7}{0}{0}{2}{-0.2}{0}
\draw[line width=0.5pt, draw=black, dotted] (1,0.3)-- (0.7,0);
\draw[line width=0.5pt, draw=black, dotted] (1,-0.3)-- (0.7,0);
\draw[line width=0.5pt, draw=black, dotted] (0.7,-0.3)-- (0.7,0);
\draw[line width=0.5pt, draw=black, dotted] (0.7,0.3)-- (0.7,0);
\draw[line width=0.5pt, draw=black, dotted] (-1,0.3)-- (-0.7,0);
\draw[line width=0.5pt, draw=black, dotted] (-1,-0.3)-- (-0.7,0);
\draw[line width=0.5pt, draw=black, dotted] (-0.7,-0.3)-- (-0.7,0);
\draw[line width=0.5pt, draw=black, dotted] (-0.7,0.3)-- (-0.7,0);

\draw[line width=0.5pt, draw=black, dotted] (0.05,-0.7)-- (0.35,-0.7);
\draw[line width=0.5pt, draw=black, dotted] (0.65,-0.7)-- (0.35,-0.7);
\draw[line width=0.5pt, draw=black, dotted] (0.05,-1)-- (0.35,-0.7);
\draw[line width=0.5pt, draw=black, dotted] (0.65,-1)-- (0.35,-0.7);
\end{tikzpicture}
\\ 
\gS{\simeq_r}
&
\gS{-}
\begin{tikzpicture}[baseline=(A),outer sep=0pt,inner sep=0pt]
\node (A) at (0,0) {};
\thre{0}{1}
\twoeABS{0}{0.7}{1}
\threABC{-0.7}{0}{0}{0}{0.35}{-0.7}{2}
\draw[line width=0.5pt, draw=black, dotted] (1,0.3)-- (0.7,0);
\draw[line width=0.5pt, draw=black, dotted] (1,-0.3)-- (0.7,0);
\draw[line width=0.5pt, draw=black, dotted] (0.7,-0.3)-- (0.7,0);
\draw[line width=0.5pt, draw=black, dotted] (0.7,0.3)-- (0.7,0);
\draw[line width=0.5pt, draw=black, dotted] (-1,0.3)-- (-0.7,0);
\draw[line width=0.5pt, draw=black, dotted] (-1,-0.3)-- (-0.7,0);
\draw[line width=0.5pt, draw=black, dotted] (-0.7,-0.3)-- (-0.7,0);
\draw[line width=0.5pt, draw=black, dotted] (-0.7,0.3)-- (-0.7,0);

\draw[line width=0.5pt, draw=black, dotted] (0.05,-0.7)-- (0.35,-0.7);
\draw[line width=0.5pt, draw=black, dotted] (0.65,-0.7)-- (0.35,-0.7);
\draw[line width=0.5pt, draw=black, dotted] (0.05,-1)-- (0.35,-0.7);
\draw[line width=0.5pt, draw=black, dotted] (0.65,-1)-- (0.35,-0.7);
\end{tikzpicture}
\gS{-}
\begin{tikzpicture}[baseline=(A),outer sep=0pt,inner sep=0pt]
\node (A) at (0,0) {};
\thre{0}{2}
\draw[line width=0.5pt, draw=black]  (-0.35,0) .. controls (-0.35,1) and (0.35,1) .. (0.7,0);
\draw[line width=0.5pt, draw=black] (-0.7,0)-- (0,0) node[font=\tiny, midway, below=2pt] {$\iiiE{1}$};
\draw[black, fill=black] (0.7,0) circle [radius=1.5pt];
\draw[black, fill=black] (-0.7,0) circle [radius=1.5pt];
\twoeARB{0.35}{-0.7}{0}{0}{1}{right=2pt}
\draw[line width=0.5pt, draw=black, dotted] (1,0.3)-- (0.7,0);
\draw[line width=0.5pt, draw=black, dotted] (1,-0.3)-- (0.7,0);
\draw[line width=0.5pt, draw=black, dotted] (0.7,-0.3)-- (0.7,0);
\draw[line width=0.5pt, draw=black, dotted] (0.7,0.3)-- (0.7,0);
\draw[line width=0.5pt, draw=black, dotted] (-1,0.3)-- (-0.7,0);
\draw[line width=0.5pt, draw=black, dotted] (-1,-0.3)-- (-0.7,0);
\draw[line width=0.5pt, draw=black, dotted] (-0.7,-0.3)-- (-0.7,0);
\draw[line width=0.5pt, draw=black, dotted] (-0.7,0.3)-- (-0.7,0);

\draw[line width=0.5pt, draw=black, dotted] (0.05,-0.7)-- (0.35,-0.7);
\draw[line width=0.5pt, draw=black, dotted] (0.65,-0.7)-- (0.35,-0.7);
\draw[line width=0.5pt, draw=black, dotted] (0.05,-1)-- (0.35,-0.7);
\draw[line width=0.5pt, draw=black, dotted] (0.65,-1)-- (0.35,-0.7);
\end{tikzpicture}
\gS{\simeq_r}
\gS{-}
\begin{tikzpicture}[baseline=(A),outer sep=0pt,inner sep=0pt]
\node (A) at (0,0) {};
\thre{0}{1}
\twoeABS{0}{0.7}{1}
\threABC{-0.7}{0}{0}{0}{0.35}{-0.7}{2}
\draw[line width=0.5pt, draw=black, dotted] (1,0.3)-- (0.7,0);
\draw[line width=0.5pt, draw=black, dotted] (1,-0.3)-- (0.7,0);
\draw[line width=0.5pt, draw=black, dotted] (0.7,-0.3)-- (0.7,0);
\draw[line width=0.5pt, draw=black, dotted] (0.7,0.3)-- (0.7,0);
\draw[line width=0.5pt, draw=black, dotted] (-1,0.3)-- (-0.7,0);
\draw[line width=0.5pt, draw=black, dotted] (-1,-0.3)-- (-0.7,0);
\draw[line width=0.5pt, draw=black, dotted] (-0.7,-0.3)-- (-0.7,0);
\draw[line width=0.5pt, draw=black, dotted] (-0.7,0.3)-- (-0.7,0);

\draw[line width=0.5pt, draw=black, dotted] (0.05,-0.7)-- (0.35,-0.7);
\draw[line width=0.5pt, draw=black, dotted] (0.65,-0.7)-- (0.35,-0.7);
\draw[line width=0.5pt, draw=black, dotted] (0.05,-1)-- (0.35,-0.7);
\draw[line width=0.5pt, draw=black, dotted] (0.65,-1)-- (0.35,-0.7);
\end{tikzpicture}
\gS{-}
\begin{tikzpicture}[baseline=(A),outer sep=0pt,inner sep=0pt]
\node (A) at (0,0) {};
\thre{0}{1}
\draw[line width=0.5pt, draw=black]  (-0.35,0) .. controls (-0.35,1) and (0.35,1) .. (0.7,0);
\draw[line width=0.5pt, draw=black] (-0.7,0)-- (0,0) node[font=\tiny, midway, below=2pt] {$\iiiE{2}$};
\draw[black, fill=black] (0.7,0) circle [radius=1.5pt];
\draw[black, fill=black] (-0.7,0) circle [radius=1.5pt];
\twoeARB{0.35}{-0.7}{0}{0}{1}{right=2pt}
\draw[line width=0.5pt, draw=black, dotted] (1,0.3)-- (0.7,0);
\draw[line width=0.5pt, draw=black, dotted] (1,-0.3)-- (0.7,0);
\draw[line width=0.5pt, draw=black, dotted] (0.7,-0.3)-- (0.7,0);
\draw[line width=0.5pt, draw=black, dotted] (0.7,0.3)-- (0.7,0);
\draw[line width=0.5pt, draw=black, dotted] (-1,0.3)-- (-0.7,0);
\draw[line width=0.5pt, draw=black, dotted] (-1,-0.3)-- (-0.7,0);
\draw[line width=0.5pt, draw=black, dotted] (-0.7,-0.3)-- (-0.7,0);
\draw[line width=0.5pt, draw=black, dotted] (-0.7,0.3)-- (-0.7,0);

\draw[line width=0.5pt, draw=black, dotted] (0.05,-0.7)-- (0.35,-0.7);
\draw[line width=0.5pt, draw=black, dotted] (0.65,-0.7)-- (0.35,-0.7);
\draw[line width=0.5pt, draw=black, dotted] (0.05,-1)-- (0.35,-0.7);
\draw[line width=0.5pt, draw=black, dotted] (0.65,-1)-- (0.35,-0.7);
\end{tikzpicture}
\gES{.}
\end{longtable} 
\noindent
This is the mirrored version of the first equation from Case 1. The mirrored version of vertices $\Ver_{22}$ are the blocks 
\begin{tikzpicture}[baseline=(A),outer sep=0pt,inner sep=0pt]
\node (A) at (0,-0.1) {};
\vera{0}{0}{\alpha}
\end{tikzpicture}
from Theorem~\ref{th:fftSL5}. We conclude that the number of blocks 
\begin{tikzpicture}[baseline=(A),outer sep=0pt,inner sep=0pt]
\node (A) at (0,-0.1) {};
\vera{0}{0}{\alpha}
\end{tikzpicture}
plus the number of $3$-edges that are not part of a block 
\begin{tikzpicture}[baseline=(A),outer sep=0pt,inner sep=0pt]
\node (A) at (0,-0.1) {};
\vera{0}{0}{\alpha}
\end{tikzpicture}
 is less than or equal to nine. Only for one non-looping $3$-edge of such graph there can be non-looping $3$-edges on three sides, i.e. we have a 'star'. Here only three or four non-looping $3$-edges are possible. If we have a 'chain', up to five non-looping $3$-edges are possible.

Now assume $\Gamma$ has a cycle (of non-looping $3$-edges). Then all vertices of virtual degree one must be of type $\Ver_{22}$, otherwise the number of $3$-edges would exceed the number of $2$-edges. The cycle can be made up by two, three, or four $3$-edges, where the total number of non-looping $3$-edges is smaller or equal to four. We get the remaining graphs from the theorem. 

Finally consider the restrictions on numbers of edges. The last one - number of $3$-edges less than or equal number of $2$-edges - can be made since we are considering the mirror graphs as well. For all graphs with no non-looping $3$-edges the first two restrictions are due to the previous observations in Cases 1 and 2, and due to the last restriction as well as the form of the graphs. For graphs with looping $3$-edges, the same arguments hold with $2$- and $3$-edges interchanged.
\end{proof}

\section{Relations of  $\SL_4$ }
\label{sec:SL4rel}

\begin{example}\label{ex:VV*}
Consider graphs no. 1 and 2 from Proposition~\ref{prop:maxset}. By applying the Pl\"ucker relation from Lemma~\ref{le:exch} three times, we can \emph{pull over} the $3$-edge of color $\iiiE{4}$ and get the well known - see~\cite[p. 255]{PV} - relation:
.
\end{align*}
We have $\det(A)=\INV{\iiE{i_1\cdots i_6}}$, $\det(B)=-\INV{\iiE{j_1\cdots j_6}}$ and  $A^\mathrm{T}B=C$. Thus 
$$\det(C)+ \INV{\iiE{i_1\cdots i_6}}\INV{\iiE{i_1\cdots i_6}}=0$$ 
holds. Moreover, we have the standard Pl\"ucker identity for determinants of matrices of the form $A$:
$$
\sum\limits_{(i_1\cdots i_6,j_1)\vdash(k_1\ldots k_7)} \INV{\iiE{i_1\cdots i_6}}\INV{\iiE{j_1\cdots j_6}}=0.
$$
Both of the above identities could also be achieved via 'going around circular graphs', which turns out to be a lot harder as this approach. Since we found all relations from Theorem~\ref{th:SL4rel}, the proof is complete.
\end{proof}

\section{An outlook}\label{sec:outlook}

In this section, we want to give a short compendium of possible further applications of our method. 
First of all, at least for small $n$, some $n_i=0$ and aid of computers, determination of generating sets for $\CC[W]^{\SL_n}$ seems to be possible. 

On the other hand, as Section~\ref{sec:SL4rel} shows, at least our method provides some intuitive processes to generate relations, while showing (in general) that these generate the ideal of relations requires more and possibly totally different considerations.

Of course our method is not restricted to antisymmetric tensors. As we mentioned in the introduction, related methods have been applied to binary forms. On the other hand, the symbolic method elaborated by Grosshans, Rota and Stein is able to deal with combinations of symmetric and antisymmetric tensors, so it seems likely to apply our graph method to such combinations. In this case, edges of different 'behaviour' would correspond to symmetric or antisymmetric tensors respectively. The presumably easiest nontrivial case would be that of $\SL_3$ acting on symmetric and antisymmetric $2$-tensors. 

Another direction for generalization is that of changing the acting group. The classical groups $\mathrm{SO}_n$ and $\mathrm{Sp}_n$ for example  have a \emph{principal tensor} $g$ besides $\det$. This tensor is an (anti-)symmetric bilinear form, see~\cite[\S 9.5]{PV}. It could be represented by vertices of degree two that now behave differently than the vertices of degree $n$ corresponding to $\det$.


\end{document}